\numberwithin{equation}{section}
\theoremstyle{plain}
\newtheorem{theorem}{Theorem}[section]
\newtheorem{lemma}[theorem]{Lemma}
\newtheorem{proposition}[theorem]{Proposition}
\newtheorem{cor}[theorem]{Corollary}
\theoremstyle{remark}
\newtheorem{remark}[theorem]{Remark}
\newtheorem{assumption}[theorem]{Assumption}
\DeclarePairedDelimiter{\floor}{\lfloor}{\rfloor}
\newcommand{\N}{\mathbb{N}}
\newcommand{\R}{\mathbb{R}}
\newcommand{\Z}{\mathbb{Z}}
\newcommand{\p}{\mathbb{P}}
\newcommand{\cB}{\mathcal{B}}
\newcommand{\cC}{\mathcal{C}}
\newcommand{\cD}{\mathcal{D}}
\newcommand{\cF}{\mathcal{F}}
\newcommand{\cL}{\mathcal{L}}
\newcommand{\cU}{\mathcal{U}}
\newcommand{\cV}{\mathcal{V}}
\newcommand{\cW}{\mathcal{W}}
\newcommand{\cY}{\mathcal{Y}}
\newcommand{\cX}{\mathcal{X}}
\newcommand{\fD}{\mathfrak{D}}
\newcommand{\E}{\mathbb{E}}
\newcommand{\Cov}{\mathrm{Cov}}
\newcommand{\V}{\operatorname{Var}} 
\renewcommand{\phi}{\varphi}
\renewcommand{\epsilon}{\varepsilon}
\newcommand{\diff}{\mathrm{d}}
\newcommand{\diam}{\operatorname{diam}}
\newcommand{\ul}{\underline}
\newcommand{\ol}{\overline}
\newcommand{\wt}{\widetilde}
\newcommand{\wh}{\widehat}
\newcommand{\1}[1]{\,\mathds{1}\! \left\{ #1 \right\} }
\newcommand{\dN}{\mathrm{N}}
\newcommand{\lf}{\lfloor}
\newcommand{\rf}{\rfloor}
\newcommand{\pd}{\operatorname{PD}}
\newcommand{\pers}{\operatorname{Pers}}
\DeclareMathOperator*{\argmin}{arg\,min}
\begin{document}

\begin{frontmatter}
\title{Two-sample tests for relevant differences in persistence diagrams}
\runtitle{Two-sample tests in persistence diagrams}
\begin{aug}

\author{\fnms{Johannes} \snm{Krebs}  
\ead[label=e1]{johannes.krebs@ku.de}}
\address{KU Eichstätt, Ostenstraße 28, 85072 Eichstätt, Germany.
%\printead{e1} 
}

\author{\fnms{Daniel} \snm{Rademacher}  
\ead[label=e2]{daniel.rademacher@uni-heidelberg.de}}
\address{Heidelberg University, Im Neuenheimer Feld 205, 69120 Heidelberg, Germany.
%\printead{e2} 
}

\runauthor{Krebs and Rademacher}
\affiliation{KU Eichstätt and Heidelberg University}
\end{aug}

\begin{abstract}
We study two-sample tests for relevant differences in persistence diagrams obtained from $L^p$-$m$-approximable data $(\cX_t)_t$ and $(\cY_t)_t$. To this end, we compare  variance estimates w.r.t.\ the Wasserstein metrics on the space of persistence diagrams. In detail, we consider two test procedures. The first compares the Fr{\'e}chet variances of the two samples based on estimators for the Fr{\'e}chet mean of the observed persistence diagrams $\pd(\cX_i)$ ($1\le i\le m$), resp., $\pd(\cY_j)$ ($1\le j\le n$) of a given feature dimension. We use classical functional central limit theorems to establish consistency of the testing procedure. The second procedure relies on a comparison of the so-called independent copy variances of the respective samples. Technically, this leads to functional central limit theorems for U-statistics built on $L^p$-$m$-approximable sample data.
\end{abstract}

\begin{keyword}[class=MSC2020]
\kwd[Primary ]{62R40}%TDA tag
\kwd{62G10}%Nonparametric hypothesis testing
\kwd[; secondary ]{62R20}%statistics on metric spaces
\kwd{60F17} %functional limit theorems
%\kwd{60F05}%central limit theorems
\end{keyword}

\begin{keyword}
\kwd{Functional central limit theorems} \kwd{$L^p$-$m$-approximable data} \kwd{persistence diagrams} \kwd{relevant differences} 
\kwd{self-normalization} \kwd{topological data analysis} \kwd{two sample tests} \kwd{U-statistics}
\end{keyword}
%

%
% history:
% \received{\smonth{1} \syear{0000}}
%\tableofcontents
%
\end{frontmatter}

%%%
%%%
%%%

%%%
We study two-sample tests for relevant differences in the (Fr{\'e}chet) variances of observed persistence diagrams $\pd(\cX_i)$ ($1\le i\le m$), resp., $\pd(\cY_j)$ ($1\le j\le n$) which each feature a weak dependence relation. Statistical tests on relevant differences have enjoyed quite some popularity recently. In particular, in the domain of functional time series there are major contributions; we refer to Dette and Wied \cite{dette2016detecting}, Dette and Wu \cite{dette2019detecting} and Aue et al.\ \cite{aue2019two}. This testing problem has not received much attention in topological data analysis (TDA) so far.

Classical two-sample tests focus on hypotheses of the form
\begin{align*}
	&H_0: T_\cX=  T_\cY \text{ vs. } H_1: T_\cX \neq T_\cY,
\end{align*}
for suitable real-valued statistics $T_\cX = T\big( (\pd(\cX_i))_{i= 1}^m \big)$ and $T_\cY = T\big((\pd(\cY_j))_{j=1}^n\big)$. In contrast, we study relevant hypotheses of the form
\begin{align*}
	&H_0: | T_\cX - T_\cY  |^2 \le \Delta \text{ vs. } H_1: |T_\cX- T_\cY  |^2 > \Delta.
\end{align*}
The statistics $T_\cX$ and $T_\cY$ which we consider are variances estimates of a sequence of persistence diagrams $\pd(\cX_i)$ ($1\le i\le m$), resp., $\pd(\cY_j)$ ($1\le j\le n$). The underlying data generating processes $(\cX_t)_t$ and $(\cY_t)_t$ are $L^p$-$m$-approximable processes with values in a suitable subset $\fD$ of $\R^d$. We assume that the diagrams are built with the \v Cech filtration function which is a classical choice in TDA. We refer to the book of Boissonnat et al.\ \cite{boissonnat2018geometric} and the introductory article of Chazal and Michel \cite{chazal2021introduction} for a background on the subject, in particular, the notion of a filtration function in TDA and the construction of persistence diagrams from simplicial complexes.

Early celebrated contributions which highlighted the potential of TDA are Edelsbrunner et al.\ \cite{edelsbrunner2000topological}, Zomorodian and Carlsson \cite{zomorodian2005computing} and Carlsson et al.\ \cite{carlsson2009topology}. A vast share of recent contributions in TDA focus on computational and practical aspects of TDA, nevertheless the methods have received some attention in the area of (statistical) time series as well, we refer to Seversky et al.\ \cite{seversky2016time}, Umeda \cite{umeda2017time}. 

However, despite the widespread dissemination across various disciplines, for large parts of TDA, the development of mathematically sound statistical tests is still in its infancy.
Our contribution to this matter is as follows. We study two two-sample tests. The first is based on Fr{\'e}chet means and Fr{\'e}chet variances. Properties of Fr{\'e}chet variances of a sequence iid data objects have been studied by Dubey and Müller \cite{dubey2019frechet, dubey2020frechet}, e.g., in the context of structural breaks.
The second approach is via so-called independent copy (inco)-variances which are a special case of U-statistics (Hoeffding \cite{hoeffding1948StatisticsWithAsympNormalDistribution}). Dehling and Wendler \cite{dehling2010CLTandBootstrapForUStatistics}, Leucht \cite{leucht2012DegenerateUandVStatistics} study U-statistics the context of dependent data. 
Our contribution is as follows: We develop in both settings self-normalized tests for relevant differences in the variances of persistence diagrams which are generated by weakly depedent data. Moreover, we develop a function central limit theorem for general U-statistics of weakly dependent data because these underlying structure for the inco-variance approach.

%%%
%%%
We conclude with some notation.
We will assume that $\R^d$ is equipped with some $l^r$-norm, i.e., $\|x\| = \|x\|_r = (\sum_{i=1}^d |x_i|^r)^{1/r}$ for some $r\geq 1$.
	Also for $x,y\in\R^d$ we may write $(x,y)$ to denote the $\R^{2d}$-valued vector $(x_1,\ldots,x_d,y_1,\ldots,y_d)$ and -- unless otherwise stated -- all vector	quantities are always viewed as column-vectors.
The $d$-dimensional Lebesgue measure of a Borel set $A\subseteq\R^d$ is denoted by $|A|$.
The closed $a$-ball at a point $x\in\R^d$ is the set $B(x,a)=\{y: \|x-y\|_2\le a \}$.

%%%
%%%
The rest of the manuscript is organized as follows. 
Section~\ref{Section_Model} contains a short introduction to the main objects from TDA as well a detailed description of the framework for the two sample tests. 
The Subsection~\ref{Subsection_TestFrechetMean} contains the results for two sample tests based on Fr{\'e}chet means, while the Subsection~\ref{Subsection_IncoVariances} 
treats the case for the inco-variances. 

The mathematical background on U-statistics is treated in Section~\ref{Section_Background}. All mathematical details are deferred to Section~\ref{Section_MathematicalDetails}.

\section{Tests for persistence diagrams}\label{Section_Model}
Throughout the article, we assume that the underlying simplicial complexes are all built with the \v Cech filtration function. That is, given a finite $P\subseteq\R^d$, a simplex $\{x_0,\ldots,x_k\} \subseteq P$ is an element of the \v Cech complex at parameter $a>0$ if and only if $\bigcap_{i=0}^k B(x_i,a) \not = \emptyset$. We write $\cC(P,a)$ for this simplicial complex.

Let $\mathcal{D}$ denote the set of all persistence diagrams and consider the $L^p$-Wasserstein metrics 
\begin{align*}
	W_r(U,V) &= \inf_{\gamma } \left( \sum_{u\in U} \|u-\gamma(u)\|_\infty^r \right)^{1/r}, \quad r\in [1,\infty),\\
	W_\infty(U,V) &= \inf_{\gamma}  \sup_{u\in U} \|u-\gamma(u)\|_\infty.
\end{align*}
$W_\infty$ is also known as the bottleneck distance $W_B$.
Then, it is true that $W_\infty(U,V) \le W_q(U,V) \le W_r(U,V) \le \max\{ \# U, \# V \}^{1/r-1/q} W_q(U,V)$ for $1\le r \le q \le \infty$ by the usual norm inequalities in the 
Euclidean space.

Define for $r\in [1,\infty]$ the sets
$$
	\mathcal{D}_r := \{ U\in\cD \mid W_r(U, U_{\emptyset}) < \infty\},
$$
where $U_{\emptyset}=\{(x,x): x\ge 0\}$ is a diagram with just the diagonal. 
Then $(\mathcal{D}_r, W_r)$ is a complete metric space and a set $S\subset \mathcal{D}_r$ is totally bounded if and only if it is bounded, off-diagonally birth-death bounded and 
uniform, see Mileyko et al. \cite{mileyko2011probability}[Theorem 6 and 21] for more details, in particular the notion of `uniform'. Moreover, it is known that $(\mathcal{D}_2, W_2)$ is a non-negatively curved Alexandrov space for $p=2$, see Turner et al. \cite{turner2014frechet}[Theorem 2.5].

We equip the metric space $(\cD_r,W_r)$ with its Borel-$\sigma$-field $\cB(\cD_r)$. To any a probability measure $\rho$ on $(\cD_r, \cB(\cD_r))$, there corresponds a so-called Fr\'echet function 
\begin{align}\label{Eq:FrechetFunction1}
 F_{\rho} \colon \cD_r \to \R, \quad V \mapsto \int_{\cD_r} W_r^2(U,V) \,\rho(\diff U). 
 \end{align}
The \emph{Fr\'echet variance} $\V(\rho) \in \R$ and the \emph{Fr\'echet mean(s)} $\mu(\rho) \subseteq \cD_r$ of $\rho$ are then defined as the solutions of the optimization problem
\begin{align}\label{Eq:FrechetMeanAndVariance}
	\V(\rho) = \inf_{V\in \cD_r} F_\rho(V) , \quad \mu(\rho) = \argmin_{V\in \cD_r}F_\rho(V).
\end{align}
If the probability measure $\rho$ is induced by a $\cD_r$-valued random element $X$, the corresponding Fr\'echet function can simply be expressed via an expectation 
\begin{align}\label{Eq:FrechetFunction2}
	F_X\colon \cD_r \to \R,\quad V \mapsto \E[W_r^2(X,V)].
\end{align}
We may write $\V(X)$ respectively $\mu(X)$ for the solutions of \eqref{Eq:FrechetMeanAndVariance} and refer to these quantities as Fr\'echet variance respectively Fr\'echet mean(s) of $X$.
The notational similarity to the Euclidean mean and variance is of course intentional, since the Fr\'echet mean and variance poses a coherent generalization to metric spaces. Especially, the
Fr\'echet variance provides a natural measure for dispersion. 

Under mild assumptions on $\rho$ (e.g., bounded support), respectively $X$, existence of a Fr\'echet mean is guaranteed, we refer to Mileyko et al. \cite{mileyko2011probability}
[Theorem 21 and Lemma 27] for a detailed study on Fr\'echet means of persistence diagrams.

In particular, we have for the empirical measure $\rho_m = \frac{1}{m} \sum_{i=1}^{m} \delta_{\pd(\cX_i)}$ based on some $\R^d$-valued point cloud process $\cX = (\cX_t)_t$
\begin{align*}
	F_{\rho_m}(V) = \frac{1}{m} \sum_{i=1}^m W_r^2(\pd(\cX_i),V),\quad V\in \cD_r.
\end{align*}
Assuming stationarity of an underlying $\R^d$-valued point cloud process $\cX = (\cX_t)_t$ implies that the sequence $(\pd(\cX_t))_t$ of corresponding persistence diagrams of a given feature dimension $k\in\{0,\ldots,d-1\}$ is also a stationary process in the metric space $(\mathcal{D}_r, W_r)$, that is $\pd(\cX_t) \sim \rho_{\cX}$ for some probability measure $\rho_{\cX}$. Given a sample $\cX_1,\ldots,\cX_m$ stemming from the process $\cX$, this suggests 
\begin{align*}
	&\arg\min_{V\in \cD_r} \frac{1}{m} \sum_{i=1}^m W_r^2(\pd(\cX_i),V) \text{ and } %\label{FrechetMeanEst}\\
	\wh{\V}(\cX) = \frac{1}{m} \sum_{i=1}^m W_r^2(\pd(\cX_i),\wh{\mu}_\cX) %\label{FrechetVarianceEst}
\end{align*}
as suitable estimates for the Fr\'echet mean $\mu(\cX) = \mu_{\rho_{\cX}}$ and the Fr\'echet variance $\V(\cX) = \V_{\rho_{\cX}}$ of the 
persistence diagram $\pd(\cX_0)$.

An alternative measure for the dispersion of persistence diagrams corresponding to a (stationary) point cloud process $\cX = (\cX_t)_t$ is given by the so-called 
\emph{independent copy (inco)-variance}
\begin{align}\label{Eq:InCoVariancePD}
	\sigma^2(\cX) := \frac{1}{2} \E[ W_r^2(\pd(\cX_0), \pd(\cX')) ],
\end{align}
where $\cX'$ is independent of $\cX$ and has the same distribution as $\cX_0$. The motivation for \eqref{Eq:InCoVariancePD} stems from the simple 
variance identity $\V(X) = \E[(X-X')^2]/2$, where $X$ is an $\R$-valued random variable with finite second moment and $X'$ is an independent copy. 
However, contrary to the Euclidean case, the Fr\'echet variance and inco-variance do not coincide in general, but a simple calculation shows 
\begin{align*}
	 \V(\cX) - \E[ W_r(\pd(\cX_0), \pd(\cX')) ]^2 \leq \sigma^2(\cX) \leq 2 \V(\cX).
\end{align*} 
Since determining $\V(\cX)$ requires a computation or estimation 
of $\mu(\cX)$, which in turn leads to a complex minimization problem with possibly non-unique solutions, the much simpler form of the inco-variance makes 
it a convenient candidate as a dispersion measure -- not only from a computational point of view. 
As an estimator for $\sigma^2(\cX)$ we consider the U-statistic 
\begin{align}\label{Eq:InCoVariancePDEstimator}
	\wh\sigma^2(\cX) := \frac{1}{m(m-1)} \sum_{i,j=1\atop i\not= j}^m \frac{1}{2} W_r^2(\pd(\cX_i), \pd(\cX_j)).
\end{align}
In the following, let $\fD \subseteq \R^d$ be a Borel set. Usually, we choose $\fD$ bounded and convex.
Assume that we are given two samples $\cX_1,\ldots,\cX_m$ and $\cY_1,\ldots,\cY_n$ stemming from independent stationary, $\fD$-valued point cloud processes 
$\cX=(\cX_t)_t$ and $\cY=(\cY_t)_t$ with common distribution $\p_\cX$, resp., $\p_\cY$. We are interested in testing the second order similarity 
of corresponding persistence diagrams $\pd(\cX_0)$ and $\pd(\cY_0)$ for a given feature dimension $k\in\{0,\ldots,d-1\}$
by comparing their Fr\'echet variances $\V(\cX)$ and $\V(\cY)$ or alternatively their independent copy variances $\sigma^2(\cX)$ and $\sigma^2(\cY)$. 

Since it is unlikely that these quantities are exactly identical it seems more reasonable to test whether their difference is substantial. This can be 
formulated as testing problem for relevant differences
\begin{align}\label{Testproblem:FrechetVariance}
	H_0\colon (\V_\cX - \V_\cY)^2 \leq \Delta \quad \text{vs.} \quad H_1\colon (\V_\cX - \V_\cY)^2 > \Delta,
\end{align}
respectively, 
\begin{align}\label{Testproblem:IncoVariance}
	H_0\colon (\sigma_{\cX}^2 - \sigma_{\cY}^2)^2 \leq \Delta \quad \text{vs.} \quad H_1\colon (\sigma_{\cX}^2 - \sigma_{\cY}^2)^2 > \Delta,
\end{align}  
where $\Delta \geq 0$ is a threshold parameter (set externally).

%%%
%%%
\subsection{A test statistic for relevant differences of Fr\'echet variances}\label{Subsection_TestFrechetMean}
We begin with the problem \eqref{Testproblem:FrechetVariance} and test whether the difference $D=\V(\cX) - \V(\cY)$ of the 
Fr\'echet variances of the persistence diagrams $\pd(\cX_0)$ and $\pd(\cY_0)$ is substantial. 
To this end, we need to consider the partial-sum process version of the variance estimate, that is
\begin{align*}
	\wh{\V}(\cX)(s) = \frac{1}{\lf ms \rf} \sum_{i=1}^{\lf ms \rf} W_r^2(\pd(\cX_i),\wh{\mu}(\cX)), \qquad s \in[0,1],\\
	\wh{\V}(\cY)(s) = \frac{1}{\lf ns \rf} \sum_{i=1}^{\lf ns \rf} W_r^2(\pd(\cY_i),\wh{\mu}(\cY)),\qquad s \in[0,1].
\end{align*}
We work with
\begin{assumption}[\bf{r}]\label{A:FrechetVariances} Let $r> 4$.
\begin{itemize}
	\item [(1)] {\it Moment condition.} $\E[ W_r^4( \pd(\cX_0), \mu_\cX ) ] < \infty$, $\E[ W_r^4( \pd(\cY_0), \mu_\cY ) ] < \infty$.
	\item [(2)] {\it $L^r$-$m$-approximability.} The stationary point cloud processes $(\cX_t)_t$ and $(\cY_t)_t$ each allow for a Bernoulli shift representation, i.e.,
	$\cX_t = f(\epsilon_t,\epsilon_{t-1},\ldots)$ and $\cY_t = g(\eta_t,\eta_{t-1},\ldots)$. Here $f\colon S^{\infty} \to \fD$, resp.,
	$g\colon E^{\infty} \to \fD$ are measurable functions and $(\epsilon_t)_t$, resp., $(\eta_t)_t$ are i.i.d. random elements with values in
	some measurable space $S$, resp., $E$. Furthermore, let $(\cX_t^{(m)})_t$ and $(\cY_t^{(m)})_t$ denote the $m$-dependent approximation processes given by
	\begin{align*}
		\cX_t^{(m)} = f(\epsilon_t,\ldots,\epsilon_{t-m+1}, \epsilon_{t-m}^{(t)}, \epsilon_{t-m-1}^{(t)},\ldots ),\\
		\cY_t^{(m)} = g(\eta_t,\ldots,\eta_{t-m+1}, \eta_{t-m}^{(t)}, \eta_{t-m-1}^{(t)}, \ldots ),
	\end{align*}
	where, for each \emph{fixed} $t\in\Z$, the sequences $(\epsilon_j^{t})_j$, resp., $(\eta_j^{t})_j$ are independent copies of the original innovation
	processes $(\epsilon_t)_t$, resp., $(\eta_t)_t$. Then $(\cX_t)_t$ and $(\cY_t)_t$ are $L^r$-$m$-approximable w.r.t.\ the Hausdorff distance in the sense that 
	\begin{align*}
	\sum_{m\ge 1} \E[ d_H( \cX_0,\cX_0^{(m)})^{r} ]^{1/r} < \infty \quad\text{ resp., }\quad \sum_{m\ge 1} \E[ d_H( \cY_0,\cY_0^{(m)})^{r} ]^{1/r} < \infty.
	\end{align*}
	\item [(3)] {\it Finite diagrams.} The number of features in a diagram $\#\pd(\cX_0)$, resp., $\#\pd(\cY_0)$ satisfies 
	\begin{align*}
		\E[ \#\pd(\cX_0)^{4/(r-4)} ] < \infty, \quad\text{ resp., }\quad \E[ \#\pd(\cY_0)^{4/(r-4)} ] < \infty.
	\end{align*}
	
	\item [(4)] {\it Consistent estimators.} There are unique and consistent estimators $\wh\mu(\cX)$ and $\wh\mu(\cY)$ for the Fr\'echet means $\mu(\cX)$ and $\mu(\cY)$ 
	in the sense that
	\begin{align*}
	 W_r(\wh\mu(\cX),\mu(\cX)) = o_\p(1) \quad (m\to\infty) \quad \text{ resp., } \quad  W_r(\wh\mu(\cY),\mu(\cY)) = o_\p(1) \quad (n\to\infty). 
	\end{align*}
	Moreover, these estimators attain the following implicit rate
	\begin{align*}
		\max_{m^{1/2}\le k\le m}  \frac{1}{\sqrt{k}} \Big|\sum_{i=1}^k W_r^2( \pd(\cX_i),\wh\mu_\cX) - W_r^2(\pd(\cX_i),\mu_\cX) \Big| = o_\p(1) \quad (m\to\infty)
	\end{align*}
	and
	\begin{align*}
		\max_{n^{1/2}\le \ell\le n} \frac{1}{\sqrt{\ell}}  \Big|\sum_{j=1}^\ell W_r^2( \pd(\cY_j),\wh\mu_\cY) - W_r^2(\pd(\cY_j),\mu_\cY) \Big| = o_\p(1) \quad (n\to\infty).
	\end{align*}
\end{itemize}
\end{assumption}
\noindent Moreover, we will rely on the following difference process 
\begin{align*}
	\wh{D}_{m,n}(s) = s\,\big(\wh{\V}(\cX)(s) - \wh{\V}(\cY)(s)\big), \qquad s \in[0,1].
\end{align*}
In particular, we set $\wh{D}_{m,n} = \wh{D}_{m,n}(1) = \wh{\V}(\cX) - \wh{\V}(\cY)$.
Then, given  a tolerance $\Delta > 0$, the null hypothesis in \eqref{Testproblem:FrechetVariance} is rejected if $\wh{D}_{m,n}^2 > \Delta$. 
As common in two sample problems, the main difficulty 
here is to make the statistic $\wh{D}_{m,n}$ pivotal. To that end, we define the self-normalizer 
\begin{align*}
	\wh{V}_{m,n} &= \left\{ \int_0^1 \big(\wh{D}_{m,n}^2(s) - s^2\wh{D}_{m,n}^2 \big)^2 \,\nu(\diff s) \right\}^{1/2},
\end{align*}
where $\nu$ is some probability measure on $[0,1]$. Dette and Kokot \cite{dette2022detecting} and Aue et al. \cite{aue2019two} study similar normalizers. 
We have the following result:

\begin{theorem}\label{T:FrechetVarianceJointConvergence}
Let the regularity conditions of Assumption~\ref{A:FrechetVariances} (r) be satisfied for some $r>4$ and suppose there is a constant $\tau \in (0,1)$ such that 
$\lim_{m,n \to \infty}m/(m+n) = \tau$. Moreover, let $X^*_t = W_r^2(\pd(\cX_t), \wh\mu_\cX)$, resp., $Y^*_t = W_r^2(\pd(\cY_t), \wh\mu_\cY)$ for $t\in\Z$. Then
	\begin{align*}
	\sqrt{m+n}\Big( \wh{D}_{m,n}^2 - D^2,~\wh{V}_{m,n}\Big) \Rightarrow 
	\left( \xi B(1),~ \xi \left\{ \int_0^1 s^2\big(B(s) - s B(1)\big)^2 \,\nu(\diff s) \right\}^{1/2} \right),
	\end{align*}
where $B$ is a standard Brownian motion, $\xi = 2\sqrt{\frac{\Gamma_\cX}{\tau} + \frac{\Gamma_\cY}{1-\tau} }(\E[X^*_0]-\E[Y^*_0])$ and where 
$\Gamma_{\cX} = \sum_{k\in\Z} \Cov(X^*_0,X^*_k)$, resp., $\Gamma_{\cY} = \sum_{k\in\Z} \Cov(Y^*_0,Y^*_k)$. 
\end{theorem}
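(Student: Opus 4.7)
The approach rests on a three-step reduction: (i) linearise the two partial-sum variance processes around the true Fr\'echet means; (ii) prove a one-dimensional functional CLT for the resulting stationary scalar sequences within the $L^q$-$m$-approximability framework; (iii) pass to the claimed joint limit via the factorisation $a^2-b^2=(a-b)(a+b)$ and the continuous mapping theorem.

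For~(i), set $X_t^\circ=W_r^2(\pd(\cX_t),\mu_\cX)$ and $Y_t^\circ=W_r^2(\pd(\cY_t),\mu_\cY)$. Up to a discretisation error of order $1/m$ arising from $\lfloor ms\rfloor/m\approx s$, one has $s\wh V(\cX)(s)=m^{-1}\sum_{i=1}^{\lfloor ms\rfloor}X_i^*$, and the second implicit-rate bound in Assumption~\ref{A:FrechetVariances}(4) yields $\sup_{s\in[0,1]}\sqrt m\,\bigl|m^{-1}\sum_{i=1}^{\lfloor ms\rfloor}(X_i^*-X_i^\circ)\bigr|=o_\p(1)$. The analogous statement holds for $\cY$. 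Modulo $o_\p(1)$ terms, it therefore suffices to analyse the centred partial-sum processes of the scalar sequences $(X_t^\circ)$ and $(Y_t^\circ)$.

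For~(ii), the key technical point is to verify that $(X_t^\circ)$ is $L^q$-$m$-approximable for some $q>2$. The stability of persistent homology for the \v Cech filtration gives $W_\infty(\pd(\cX_t),\pd(\cX_t^{(m)}))\le d_H(\cX_t,\cX_t^{(m)})$, and combining this with the norm inequality $W_r\le(\#\text{diagram})^{1/r}W_\infty$ and $|a^2-b^2|\le|a-b|(a+b)$ yields
\[|X_t^\circ-X_t^{\circ,(m)}|\le\bigl(\#\pd(\cX_t)+\#\pd(\cX_t^{(m)})\bigr)^{1/r}\, d_H(\cX_t,\cX_t^{(m)})\,\bigl(W_r(\pd(\cX_t),\mu_\cX)+W_r(\pd(\cX_t^{(m)}),\mu_\cX)\bigr).\]
A three-factor H\"older inequality, with exponents allocated respectively to the fourth moment in Assumption~\ref{A:FrechetVariances}(1), the $L^r$-$m$-approximation rate on $d_H$ in~(2), and the diagram-size moment $\E[(\#\pd(\cX_0))^{4/(r-4)}]<\infty$ in~(3), then gives $\sum_m\|X_0^\circ-X_0^{\circ,(m)}\|_{r/2}<\infty$. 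Since $r/2>2$, a standard functional CLT for real-valued $L^q$-$m$-approximable sequences produces $m^{-1/2}\sum_{i=1}^{\lfloor ms\rfloor}(X_i^\circ-\V_\cX)\Rightarrow\sqrt{\Gamma_\cX}\,B_\cX(s)$ in $D[0,1]$, and likewise for $\cY$ with an independent Brownian motion $B_\cY$.

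For~(iii), independence of the samples and $m/(m+n)\to\tau$ combine the two FCLTs into $\sqrt{m+n}(\wh D_{m,n}(s)-sD)\Rightarrow\sqrt{\Gamma_\cX/\tau+\Gamma_\cY/(1-\tau)}\,B(s)$ in $D[0,1]$, for $D=\V_\cX-\V_\cY$ and a standard Brownian motion $B$. Writing $\wh D_{m,n}^2(s)-s^2D^2=(\wh D_{m,n}(s)-sD)(\wh D_{m,n}(s)+sD)$ and using $\wh D_{m,n}(s)+sD\to 2sD$ uniformly in probability gives, via Slutsky, $\sqrt{m+n}(\wh D_{m,n}^2(s)-s^2D^2)\Rightarrow\xi\,sB(s)$ with $\xi=2D\sqrt{\Gamma_\cX/\tau+\Gamma_\cY/(1-\tau)}$. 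Decomposing $\wh D_{m,n}^2(s)-s^2\wh D_{m,n}^2=[\wh D_{m,n}^2(s)-s^2D^2]-s^2[\wh D_{m,n}^2-D^2]$ and applying the continuous mapping theorem (evaluation at $s=1$ for the first coordinate and $\phi\mapsto(\int_0^1\phi(s)^2\,\nu(\diff s))^{1/2}$ for the second) then produces the stated joint limit, since $\xi sB(s)-s^2\xi B(1)=\xi s(B(s)-sB(1))$. The principal obstacle lies in step~(ii): the stability bound couples the geometric increment $d_H$ with the random diagram cardinality, so the three H\"older exponents must be chosen so that Assumption~\ref{A:FrechetVariances}(1)--(3) combine into a summable upper bound on $\|X_0^\circ-X_0^{\circ,(m)}\|_{r/2}$; the exponent $4/(r-4)$ in~(3) is tailored precisely to this allocation under $r>4$.
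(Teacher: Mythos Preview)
Your approach is essentially the paper's: linearise around the true Fr\'echet means via Assumption~\ref{A:FrechetVariances}(4), establish $L^p$-$m$-approximability of the scalar sequences $X_t^\circ$ by combining persistence stability with a H\"older allocation, then invoke a standard FCLT and pass to the joint limit by continuous mapping. Two points deserve correction or more care.

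First, the H\"older allocation in step~(ii) yields only $L^2$-$m$-approximability, not $L^{r/2}$. With exponents $4r/(r-4)$ on the cardinality factor $(\#\pd)^{1/r}$, $r$ on $d_H$, and $4$ on $W_r(\pd,\mu_\cX)$, the reciprocals sum to $\tfrac{r-4}{4r}+\tfrac1r+\tfrac14=\tfrac12$, so $q=2$; these are the maximal admissible exponents under Assumption~\ref{A:FrechetVariances}(1)--(3), and $q$ cannot be pushed higher. Fortunately $q=2$ already suffices for the FCLT (Aue et al.\ 2009, Theorem~A.1, which the paper also cites), so the argument survives with this correction---but your sentence ``since $r/2>2$'' is not the operative fact.

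Second, in step~(i) the uniform bound $\sup_{s\in[0,1]}\sqrt m\,\bigl|m^{-1}\sum_{i\le\lfloor ms\rfloor}(X_i^*-X_i^\circ)\bigr|=o_\p(1)$ does not follow directly from the implicit-rate condition in Assumption~\ref{A:FrechetVariances}(4), which controls $k^{-1/2}\bigl|\sum_{i\le k}(\cdot)\bigr|$ only for $k\ge\sqrt m$. The range $k<\sqrt m$ needs a separate argument: bound $|X_i^*-X_i^\circ|\le W_r(\wh\mu_\cX,\mu_\cX)\bigl(W_r(\pd(\cX_i),\wh\mu_\cX)+W_r(\pd(\cX_i),\mu_\cX)\bigr)$, sum the at most $\sqrt m$ terms via the SLLN, and use $W_r(\wh\mu_\cX,\mu_\cX)=o_\p(1)$. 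The paper carries out exactly this split. Similarly, in step~(iii) the claim ``$\wh D_{m,n}(s)+sD\to 2sD$ uniformly'' hides the fact that $\lfloor ms\rfloor^{-1}\sum_{i\le\lfloor ms\rfloor}X_i^\circ$ is not uniformly close to $\E[X_0^\circ]$ near $s=0$; the paper handles this via a weighted maximal inequality (splitting a factor $s=s^{1-\gamma}\cdot s^\gamma$ and using a dedicated lemma) rather than a bare Slutsky step.
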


Clearly, the constant $\xi$ is infeasible in practice, however, the applied test statistic has a self normalizing property, so in the limit this constant will disappear. Now, Theorem \ref{T:FrechetVarianceJointConvergence} leads to the test-statistic
\begin{align}\label{E:FrechetVarTest}
	\wh{W}_{m,n} = \frac{\wh{D}_{m,n}^2 - \Delta}{\wh{V}_{m,n}}
\end{align}
and suggests to reject the hypothesis $H_0$ in \eqref{Testproblem:IncoVariance} if $\wh{W}_{m,n} > q_{1-\alpha}$, where $q_{1-\alpha}$ is 
the $(1-\alpha)$-quantile of
\begin{align}\label{E:LimitFrechetVarStatistic}
	W = \frac{B(1)}{\left\{ \int_0^1 s^2\big(B(s) - sB(1)\big)^2 \, \nu(\diff s) \right\}^{1/2}}.
\end{align}  
As an immediate consequence we have that the test \eqref{E:FrechetVarTest} is asymptotically consistent for the relevant hypotheses in \eqref{Testproblem:IncoVariance}:
\begin{theorem}\label{T:FrechetVarTest} Let the conditions of Theorem~\ref{T:FrechetVarianceJointConvergence} be satisfied. Then 	
	\begin{align}\label{E:AsymConsistencyFrechetVarTest}
		\lim_{m,n \to \infty} \p( \wh{W}_{m,n} > q_{1-\alpha} ) 
		= \begin{cases}
			1 &, D^2 > \Delta,\\
			\alpha &, D^2 = \Delta,\\
			0 &, D^2 < \Delta.
		\end{cases} 
	\end{align}
\end{theorem}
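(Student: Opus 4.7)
The plan is to deduce the three-case statement from the joint functional limit in Theorem~\ref{T:FrechetVarianceJointConvergence} by a Slutsky--continuous mapping argument. Setting $Z := \{\int_0^1 s^2(B(s)-sB(1))^2\,\nu(\diff s)\}^{1/2}$, that theorem delivers the joint convergence
\[
\bigl(\sqrt{m+n}\,(\wh{D}_{m,n}^2 - D^2),\; \sqrt{m+n}\,\wh{V}_{m,n}\bigr) \Rightarrow (\xi B(1),\,\xi Z),
\]
with $\xi\neq 0$ whenever $D\neq 0$. Provided $\nu$ charges $(0,1)$ in the usual sense, the Brownian bridge $B(s)-sB(1)$ is almost surely not $\nu$-a.e.\ zero, so $Z>0$ a.s.; moreover $W=B(1)/Z$ has a continuous distribution function, so $q_{1-\alpha}$ is a continuity point of its law.

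In the boundary case $D^2=\Delta$, one has $\wh{D}_{m,n}^2-\Delta = \wh{D}_{m,n}^2-D^2$. Applying the continuous mapping theorem to $(x,y)\mapsto x/y$ on $\R\times(0,\infty)$, which contains $(\xi B(1),\xi Z)$ almost surely, I obtain
\[
\wh{W}_{m,n} = \frac{\sqrt{m+n}\,(\wh{D}_{m,n}^2-D^2)}{\sqrt{m+n}\,\wh{V}_{m,n}} \;\Rightarrow\; \frac{\xi B(1)}{\xi Z} = W,
\]
so $\p(\wh{W}_{m,n}>q_{1-\alpha}) \to \p(W>q_{1-\alpha}) = \alpha$ by the definition of $q_{1-\alpha}$.

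For $D^2>\Delta$ I would write
\[
\wh{W}_{m,n} = \frac{\wh{D}_{m,n}^2-D^2}{\wh{V}_{m,n}} + \frac{D^2-\Delta}{\wh{V}_{m,n}}.
\]
The first summand converges in distribution to $W$ by the argument above, hence is $O_p(1)$, while the second is a strictly positive constant divided by a quantity of order $O_p(1/\sqrt{m+n})$, so it tends to $+\infty$ in probability. Consequently $\wh{W}_{m,n}\to+\infty$ in probability and the rejection probability tends to $1$. The case $D^2<\Delta$ is symmetric, yielding $\wh{W}_{m,n}\to-\infty$; the subcase $D=0$ (which only arises when $\Delta>0$) has to be handled separately, since there $\xi=0$ and the joint limit degenerates; here I note directly that $\wh{D}_{m,n}^2-\Delta \to -\Delta <0$ while $\wh{V}_{m,n} \to 0^+$, still producing divergence to $-\infty$.

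The main technical obstacle I anticipate is justifying that both $\wh{V}_{m,n}$ and the limiting $Z$ are strictly positive (so that the ratio is well-defined and the continuous mapping theorem applies at the relevant points); this reduces to a short sample-path regularity argument for the Brownian bridge together with a mild nondegeneracy assumption on $\nu$. Once this is established, the rest of the argument is a routine combination of Theorem~\ref{T:FrechetVarianceJointConvergence} with Slutsky's theorem.
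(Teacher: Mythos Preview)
Your proposal is correct and follows essentially the same route as the paper: decompose $\wh{W}_{m,n}$ as $(\wh{D}_{m,n}^2-D^2)/\wh{V}_{m,n} + (D^2-\Delta)/\wh{V}_{m,n}$, obtain $(\wh{D}_{m,n}^2-D^2)/\wh{V}_{m,n}\Rightarrow W$ from the joint convergence in Theorem~\ref{T:FrechetVarianceJointConvergence} via the continuous mapping theorem, and let the second summand drive the three cases. In fact you are more careful than the paper on two points the authors gloss over: the almost-sure positivity of $Z$ (hence well-definedness of $W$) and the degenerate subcase $D=0$, where $\xi=0$ so the ratio limit is not delivered by the joint convergence and a direct argument via $\wh{D}_{m,n}^2-\Delta\to-\Delta<0$ and $\wh{V}_{m,n}\to 0$ is needed.
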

\begin{proof}[Proof of Theorem~\ref{T:FrechetVarTest}]
The proof is immediate. We have
\begin{align*}
	\p(\wh{W}_{m,n} > q_{1-\alpha}) 
	= \p\left( \frac{\wh{D}_{m,n}^2 - D^2}{\wh{V}_{m,n}} > q_{1-\alpha} + \frac{\Delta - D^2}{\wh{V}_{m,n}} \right)
\end{align*}
and Theorem~\ref{T:FrechetVarianceJointConvergence} implies  $(\wh{D}_{m,n}^2 - D^2)/\wh{V}_{m,n} \Rightarrow W$
as well as 
\begin{align*}
	\frac{\Delta - D^2}{\wh{V}_{m,n}} 
	\rightarrow
	\begin{cases}
		-\infty &, D^2 > \Delta,\\
		0 &, D^2 = \Delta,\\
		\infty &, D^2 < \Delta.
	\end{cases} 
\end{align*}
This proves the assertion.
\end{proof}

%%%
%%%
\subsection{A test statistic for relevant differences of inco-variances}\label{Subsection_IncoVariances}

We derive a test for the second problem \eqref{Testproblem:IncoVariance}, that is whether the difference $D=\sigma_{\cX}^2 - \sigma_{\cY}^2 $ of the 
independent copy variances is substantial. To this end, we consider the empirical counterpart $\wh{D}_{m,n} := \wh{D}_{m,n}(1,1)$, where
\begin{align}\label{E:IncoVarDifferenceProcess}
	\wh{D}_{m,n}(s,t) := \wh\sigma_{\cX}^2(s,t) - \wh\sigma_{\cY}^2(s,t), \qquad 0\leq s,t \leq 1,
\end{align}
is the difference of the two parameter inco-variance partial sum processes 
\begin{align}
	\begin{split}\label{E:IncoVarProcess}
	\wh\sigma_{\cX}^2(s,t)
	&= \frac{1}{m(m-1)} \sum_{i=1}^{\lf ms \rf} \sum_{j=1\atop j\not= i}^{\lf mt \rf} \frac{1}{2} W_r^2(\pd(\cX_i), \pd(\cX_j)),\\
	\wh\sigma_{\cY}^2(s,t)
	&= \frac{1}{n(n-1)} \sum_{i=1}^{\lf ns \rf} \sum_{j=1\atop j\not= i}^{\lf nt \rf} \frac{1}{2} W_r^2(\pd(\cY_i), \pd(\cY_j)).
	\end{split}
\end{align}
We intentionally reuse some of the symbols of the previous section here to keep the notation simple and since it is always clear from the context, which testing problem is considered. Also we put $\wh\sigma_{\cX}^2 = \wh\sigma_{\cX}^2(1,1)$ and  
$\wh\sigma_{\cY}^2 = \wh\sigma_{\cY}^2(1,1)$, so that $\wh{D}_{m,n} = \wh\sigma_{\cX}^2 - \wh\sigma_{\cY}^2$.
Moreover, in order to exploit the results of Section \ref{Section_Background}, we introduce the kernel function 
\begin{align}\label{E:KernelU-StatisticForIncoVar}
	h\colon \dN \times \dN \to \R, \qquad h(\cdot,\cdot) := 2^{-1} W_r^2( \pd(\cdot), \pd(\cdot) ),
\end{align}
so that \eqref{E:IncoVarProcess} takes the form of U-statistics with kernel $h$.

Then, given a tolerance $\Delta>0$, the null hypothesis is rejected if $\wh{D}_{m,n}^2 > \Delta$. Again, the main difficulty is
to make the test statistic $\wh{D}_{m,n}^2$ pivotal. To that end, let us introduce the following self-normalizer
\begin{align}\label{E:NormalizerIncoVarTest}
	\wh{V}_{m,n} := \left\{ \int_0^1 \int_0^1 \Big[ \wh{D}_{m,n}^2(s,t) - (st\wh{D}_{m,n})^2  \Big]^2 \, \nu(\diff s,\diff t) \right\}^{1/2},
\end{align}
where $\nu$ is a probability measure on $[0,1]^2$.

\begin{theorem}\label{Thrm:IncoVarianceJointConvergence}
Suppose $\lim_{n,m\to \infty} m/(m+n) = \tau \in (0,1)$. Either assume

 Case (a): $\# \cX_i, \# \cY_j \le C$ with probability 1 for all $i,j\in\Z$ for a certain $C\in\R_+$ and
 $$
 	\sum_{m \ge 1} m~ \E[ d_H( \cX_0, \cX_0^{(m)} )^{1-1/r} ] < \infty \text{ and } \sum_{m \ge 1} m ~ \E[ d_H( \cY_0, \cY_0^{(m)} )^{1-1/r} ] < \infty 
 $$
or assume

Case (b): Both $d<r$ as well as
$$
	\sum_{m \ge 1} m~ \E[ d_H( \cX_0, \cX_0^{(m)} )^{\rho} ] < \infty \text{ and } \sum_{m \ge 1} m ~ \E[ d_H( \cY_0, \cY_0^{(m)} )^{\rho} ] < \infty 
$$
for some $\rho\in (0,1-d/r)$. 

Then
\begin{align}
\begin{split}\label{E:IncoVarianceJointConvergence0}
	&\sqrt{m+n}( \wh{D}_{m,n}^2 - D^2, \wh{V}_{m,n} )\\
	\Rightarrow &\left( 2\xi B(1), \left\{ \xi^2 \int_0^1 \int_0^1 \Big[ st\big(tB(s) + sB(t) - 2stB(1)\big) \Big]^2  
	\, d\nu(s,t)\right\}^{1/2} \right),
\end{split}
\end{align}
where $\xi = 2\sqrt{\frac{\Gamma_\cX}{\tau} + \frac{\Gamma_\cY}{1-\tau} }(\sigma_{\cX}^2 - \sigma_{\cY}^2 )$ and where 
$\Gamma_\cX = \sum_k \Cov(h_\cX(\cX_0), h_\cX(\cX_k))$, resp., $\Gamma_\cY = \sum_k \Cov(h_\cY(\cY_0), h_\cY(\cY_k))$ for 
\begin{align*}
	h_\cX(\cdot) =& \E\Big[ \frac{1}{2} W_r^2( \pd(\cdot), \pd(\cX') )\Big], \qquad \cX' \sim \p_\cX,\\
	h_\cY(\cdot) =& \E\Big[ \frac{1}{2} W_r^2( \pd(\cdot), \pd(\cY') ) \Big], \qquad \cY' \sim \p_\cY.
\end{align*}
\end{theorem}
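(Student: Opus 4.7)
The plan is to establish functional weak convergence of the partial-sum U-statistic processes $\wh\sigma_{\cX}^2(\cdot,\cdot)$ and $\wh\sigma_{\cY}^2(\cdot,\cdot)$ in $D([0,1]^2)$, combine them via the independence of the two samples, and read off the bivariate limit by the continuous mapping theorem. The two ingredients required are (i) the functional CLT for U-statistics of $L^r$-$m$-approximable data established in Section~\ref{Section_Background}, and (ii) a verification that the persistence kernel $h(x,y)=\tfrac12 W_r^2(\pd(x),\pd(y))$ from \eqref{E:KernelU-StatisticForIncoVar} satisfies its hypotheses in each of cases (a) and (b).

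For the U-statistic FCLT I would apply Hoeffding's projection, writing $h(x,y)=\sigma_{\cX}^2+g_{\cX}(x)+g_{\cX}(y)+\psi_{\cX}(x,y)$ with $g_{\cX}(x)=h_{\cX}(x)-\sigma_{\cX}^2$ and degenerate remainder $\psi_{\cX}$. Substituting into \eqref{E:IncoVarProcess} and carefully counting pairs gives
\begin{align*}
 \wh\sigma_{\cX}^2(s,t) = st\,\sigma_{\cX}^2 + \tfrac{t}{m}\sum_{i=1}^{\lfloor ms\rfloor} g_{\cX}(\cX_i) + \tfrac{s}{m}\sum_{j=1}^{\lfloor mt\rfloor} g_{\cX}(\cX_j) + R_{m}(s,t) + o_{\p}(m^{-1/2}),
\end{align*}
where $R_{m}(s,t)$ collects the degenerate piece built from $\psi_{\cX}$. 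The main technical task is to show $\sqrt m\,\sup_{s,t}|R_m(s,t)|=o_{\p}(1)$, and in parallel to verify $L^2$-$m$-approximability of $(g_{\cX}(\cX_i))_i$ so that the classical FCLT applies to the linear part. In case (a), the bound $\#\cX_i\le C$ together with the persistence stability theorem of Chazal--Cohen-Steiner--Guibas yields $W_r(\pd(X),\pd(Y))\le C^{1/r}\,d_H(X,Y)$, so $h$ is Lipschitz in $d_H$ and the stated summability $\sum_m m\,\E[d_H(\cX_0,\cX_0^{(m)})^{1-1/r}]<\infty$ is exactly the quantitative input required by the general FCLT of Section~\ref{Section_Background} to absorb the degenerate part. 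In case (b) with $d<r$, a Divol--Polonik-type integrability bound on the total persistence of diagrams of point clouds in $\fD\subset\R^d$ replaces the Lipschitz bound, and the summability condition with exponent $\rho\in(0,1-d/r)$ plays the analogous role. Either way the FCLT together with $\lfloor ms\rfloor/m\to s$ produces
\begin{align*}
 \sqrt m\,\bigl(\wh\sigma_{\cX}^2(s,t)-st\sigma_{\cX}^2\bigr)\;\Rightarrow\;\sqrt{\Gamma_{\cX}}\,\bigl(tB_{\cX}(s)+sB_{\cX}(t)\bigr)\quad\text{in } D([0,1]^2),
\end{align*}
and analogously for $\cY$ with an independent standard BM $B_{\cY}$.

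By independence of the two samples and $m/(m+n)\to\tau$, these two FCLTs combine to
\begin{align*}
 \sqrt{m+n}\bigl(\wh D_{m,n}(s,t)-stD\bigr)\;\Rightarrow\;M(s,t):=\sqrt{\tfrac{\Gamma_{\cX}}{\tau}}\bigl(tB_{\cX}(s)+sB_{\cX}(t)\bigr)-\sqrt{\tfrac{\Gamma_{\cY}}{1-\tau}}\bigl(tB_{\cY}(s)+sB_{\cY}(t)\bigr).
\end{align*}
Factorizing $\wh D_{m,n}^2(s,t)-(st\wh D_{m,n})^2=(\wh D_{m,n}(s,t)+st\wh D_{m,n})(\wh D_{m,n}(s,t)-st\wh D_{m,n})$ with first factor converging uniformly in $(s,t)$ to $2stD$, and similarly for $\wh D_{m,n}^2-D^2$, the continuous mapping theorem applied to the integral in \eqref{E:NormalizerIncoVarTest} then yields the joint limit $\bigl(2D\,M(1,1),\,\{\int_{[0,1]^2}[2stD\,(M(s,t)-stM(1,1))]^2\,d\nu(s,t)\}^{1/2}\bigr)$. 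A short covariance computation shows $\Cov(M(s,t)-stM(1,1),M(1,1))=0$, so the two limit components are independent Gaussian; matching second moments identifies $M(1,1)\stackrel{d}{=}2\sqrt{\Gamma_{\cX}/\tau+\Gamma_{\cY}/(1-\tau)}\,B(1)$ and, jointly with this, $M(s,t)-stM(1,1)\stackrel{d}{=}\sqrt{\Gamma_{\cX}/\tau+\Gamma_{\cY}/(1-\tau)}\,(tB(s)+sB(t)-2stB(1))$ for a single standard Brownian motion $B$, which reproduces \eqref{E:IncoVarianceJointConvergence0} after substituting the definition of $\xi$. The hard part throughout is the uniform control of the degenerate remainder $R_m$ in the two separate regimes (a) and (b); the weighted summability conditions on $\E[d_H(\cX_0,\cX_0^{(m)})^{\cdot}]$ are engineered precisely so that the generic U-statistic FCLT of Section~\ref{Section_Background} can handle it.
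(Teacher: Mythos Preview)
Your proposal is correct and follows essentially the same route as the paper: verify that the kernel $h$ satisfies Assumption~\ref{AssumptionFCLT} in each case (the paper does this via Lemma~\ref{Thrm:PropertiesKernelIncoVar}, obtaining $(1-1/r)$-H\"older continuity in case (a) rather than your Lipschitz bound, though both suffice since $d_H$ is bounded on $\fD$), apply Theorem~\ref{Thrm:FCLTforUStatistic} to each sample separately, combine via independence, and finish with the continuous mapping theorem. Your identification of the limit via independence of $M(1,1)$ and the bridge $M(s,t)-stM(1,1)$ is valid but slightly roundabout; the paper (and the shortest route) simply notes that $\sqrt{\Gamma_\cX/\tau}\,B_\cX - \sqrt{\Gamma_\cY/(1-\tau)}\,B_\cY \stackrel{d}{=} \sqrt{\Gamma_\cX/\tau+\Gamma_\cY/(1-\tau)}\,B$ as a \emph{process}, whence $M(s,t)\stackrel{d}{=}\sqrt{\Gamma_\cX/\tau+\Gamma_\cY/(1-\tau)}\,(tB(s)+sB(t))$ directly, with no need to split into components.
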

Theorem \ref{Thrm:IncoVarianceJointConvergence} leads to the test-statistic
\begin{align*}
	\wh{W}_{m,n} = \frac{\wh{D}_{m,n}^2 - \Delta}{\wh{V}_{m,n}}.
\end{align*}
and suggests to reject the hypothesis $H_0$ in \eqref{Testproblem:IncoVariance} if $\wh{W}_{m,n} > q_{1-\alpha}$, where $q_{1-\alpha}$ is 
the $(1-\alpha)$-quantile of
\begin{align}\label{E:LimitIncoVarStatistic}
	W = \frac{2B(1)}{\left\{ \int_0^1 \int_0^1 \Big[ st\big(tB(s) + sB(t) - 2stB(1)\big) \Big]^2  \, d\nu(s,t) \right\}^{1/2}}.
\end{align}  
As immediate consequence we have that the test is asymptotically consistent for the relevant hypotheses in \eqref{Testproblem:IncoVariance}. The proof works in the same fashion as the proof of Theorem~\ref{T:FrechetVarTest} and we skip it.

\begin{theorem}\label{T:StatW} Let the regularity conditions of Theorem~\ref{Thrm:IncoVarianceJointConvergence} be satisfied. Then 	\begin{align*}%\label{E:StatW0}
	\lim_{m,n \to \infty} \p( \wh{W}_{m,n} > q_{1-\alpha} ) 
	= \begin{cases}
		1 &, D^2 > \Delta,\\
		\alpha &, D^2 = \Delta,\\
		0 &, D^2 < \Delta.
	\end{cases} 
	\end{align*}
\end{theorem}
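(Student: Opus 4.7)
The plan is to mimic the proof of Theorem~\ref{T:FrechetVarTest} almost verbatim, substituting the joint functional central limit from Theorem~\ref{Thrm:IncoVarianceJointConvergence} for the one used there. Starting from the identity
\begin{align*}
\p(\wh{W}_{m,n} > q_{1-\alpha}) = \p\!\left( \frac{\wh{D}_{m,n}^2 - D^2}{\wh{V}_{m,n}} > q_{1-\alpha} + \frac{\Delta - D^2}{\wh{V}_{m,n}} \right),
\end{align*}
the argument will split into identifying the weak limit of the left-hand ratio and analysing the divergence rate of the shift on the right.

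For the first task, I would appeal to Theorem~\ref{Thrm:IncoVarianceJointConvergence}, which supplies joint weak convergence of $\sqrt{m+n}$ times the numerator and the self-normalizer to $(2\xi B(1),\,|\xi|\wt V)$, where
\begin{align*}
\wt V = \left\{ \int_0^1\!\int_0^1 \bigl[ st\bigl(tB(s) + sB(t) - 2stB(1)\bigr)\bigr]^2 \,\nu(\diff s,\diff t) \right\}^{1/2}.
\end{align*}
The continuous mapping theorem then yields $\bigl(\wh{D}_{m,n}^2 - D^2\bigr)/\wh V_{m,n} \Rightarrow 2\,\mathrm{sgn}(\xi)\,B(1)/\wt V$. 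A key observation is that $\wt V$ is invariant under the Brownian reflection $B\mapsto -B$ while $B(1)$ flips sign, so this limit coincides in law with the variable $W$ from \eqref{E:LimitIncoVarStatistic}, regardless of $\mathrm{sgn}(\xi)$.

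For the shift, the same joint limit gives $\sqrt{m+n}\,\wh V_{m,n}\Rightarrow|\xi|\wt V$, which is almost surely strictly positive whenever $\xi\neq 0$. Decomposing
\begin{align*}
\frac{\Delta - D^2}{\wh V_{m,n}} = \sqrt{m+n}\,(\Delta - D^2)\cdot\frac{1}{\sqrt{m+n}\,\wh V_{m,n}},
\end{align*}
one concludes that this quantity tends in probability to $-\infty$, $0$, or $+\infty$ according as $D^2 > \Delta$, $=\Delta$, or $<\Delta$. Combining the two pieces via Slutsky and exploiting continuity of the distribution function of $W$ at $q_{1-\alpha}$ delivers the claimed trichotomy. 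The main technical nuisance I anticipate is the degenerate case $\xi = 0$ (equivalently $\sigma_\cX^2 = \sigma_\cY^2$): then the joint limit collapses to $(0,0)$ and the simple ratio argument breaks down. However, this configuration is compatible only with $D^2 = \Delta = 0$, which is typically excluded in the relevant-differences framework, since a positive threshold $\Delta$ is fixed a priori; whenever $D^2 = \Delta > 0$ one automatically has $\xi\neq 0$, and the argument proceeds without modification.
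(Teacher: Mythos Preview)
Your proposal is correct and follows essentially the same approach as the paper, which in fact omits the proof entirely, noting only that it ``works in the same fashion as the proof of Theorem~\ref{T:FrechetVarTest}.'' Your added care regarding the sign of $\xi$ (via Brownian reflection) and the degenerate case $\xi=0$ is more thorough than anything the paper spells out.
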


\section{Mathematical background: a two-parameter FCLT for U-statistics}\label{Section_Background}

In this section we derive a general functional central limit theorem for (bivariate) U-statistics: 
Let $(M,d)$ denote a metric space and let $h\colon M \times M \to \R$ be a symmetric and measurable kernel.

Let $(X_t)_{t\in\Z}$ be a stationary sequence of $M$-valued random elements defined on some common probability space $(\Omega,\cF,\p)$ with marginal distribution $X_t\sim \p_X$, $t\in\Z$. 
The corresponding U-statistic is defined by
\begin{align}\label{Def:UStatistic}
U_n(h) = \frac{1}{n(n-1)} \sum_{1\le i\le n} \sum_{1\le j \le n \atop j\neq i} h(X_i,X_j) 
= \frac{2}{n(n-1)} \sum_{1\le i< j\le n} h(X_i,X_j). 
\end{align}
Suppose $X'$ is an independent copy of $X_1$, i.e., $X'$ is independent of $X_1$ and has the same law $\p_X$. Then \eqref{Def:UStatistic} is a possible 
estimator for $\theta := \E[h(X_1,X')]$. It is well known that there is a close relation between the U-statistic $U_n(h)$ and UMV-estimators for $\theta$ in the special case of an underlying i.i.d. 
sequence $(X_t)_t$ with absolutely continuous distribution function, we refer to the classical book of Shao \cite{shao2003mathematical}. In extension of the celebrated results of Hoeffding \cite{hoeffding1948StatisticsWithAsympNormalDistribution},
the weak limit of U-statistics has been derived under several weak dependence measures, see the contributions of Eagleson \cite{eagleson1979orthogonal}, Denker \cite{denker1982statistical}, 
Carlstein \cite{carlstein1988degenerate}, Babbel \cite{babbel1989invariance}, Dehling and Wendler 
\cite{dehling2010CLTandBootstrapForUStatistics} and Leucht \cite{leucht2012DegenerateUandVStatistics}.

Our contribution in this direction is a functional CLT for U-statistics for so-called \emph{$L^p$-$m$-approximable processes}, i.e., processes $X=(X_t)_t$ such that
\begin{itemize}
	\item[(i)] each $X_t$ admits a Bernoulli shift representation $X_t=f(\epsilon_t, \epsilon_{t-1},\ldots)$, where the map $f\colon S^{\infty} \to M$ is some measurable 
	function and $(\epsilon_t)_t$ are i.i.d. random elements with values in some measure space $S$;
	\item[(ii)] the $m$-dependent approximation process $(X_t^{(m)})_t$ which is given by 
	\begin{align*}
		X_t^{(m)} = f(\epsilon_t, \ldots, \epsilon_{t-m+1}, \epsilon_{t-m}^{(t)}, \epsilon_{t-m-1}^{(t)},\ldots), \quad t\in\Z,
	\end{align*}
	satisfies 
	\begin{align}\label{E:StandardLpM}
	\sum_{m=1}^{\infty} \E \big[d(X_0,X_0^{(m)})^p\big]^{1/p} < \infty.
	\end{align}
	Here, for each \emph{fixed} $t\in \Z$, the sequence $(\epsilon_j^{(t)})_j$ is an independent copy of the original innovation process $(\epsilon_j)_j$. 
\end{itemize} 

In order to derive the weak (functional) limit of \eqref{Def:UStatistic} we follow Hoeffding's approach and consider the decomposition of the U-statistic in a main and a degenerate part:
\begin{align*}
	U_n(h)= \theta + \frac{1}{n(n-1)} \sum_{1\leq i\not= j\leq n} h_2(X_i,X_j) + \frac{1}{n}\sum_{i=1}^n h_1(X_i) + \frac{1}{n}\sum_{j=1}^n h_1(X_j),
\end{align*}
where 
\begin{align*}
h_1\colon M\to\R, \quad x\mapsto \E[ h(x,X')]-\theta~~\text{with}~~ X'\sim \p_X
\end{align*}
and
\begin{align*}
h_2\colon M\times M \to \R,\quad (x,y)\mapsto h(x,y) - \theta - h_1(x) - h_1(y).
\end{align*}
Define for $0\le s,t\le 1$ and $1\le i\le n$ (for $n\ge 2$)
\begin{align*}
	a_n(s,t) &\coloneqq \frac{1}{n(n-1)} \sum_{i=1}^{\floor{ns}} \sum_{j=1}^{\floor{nt}} \1{i\neq j} \\
	 &=  \frac{1}{n(n-1)} \max\{ (\floor{nt}-1)\floor{ns} , (\floor{ns}-1)\floor{nt} , 0\}
\end{align*}
and
\begin{align*}
	b_{n,i}(t) &\coloneqq \frac{1}{n-1}  \sum_{j=1}^{\floor{nt}} \1{i\neq j} \\
	&= \frac{1}{n-1} \max\Big\{ (\floor{nt}-1) \1{i \le \floor{nt} } + \floor{nt} \1{i > \floor{nt} }, 0 \Big\}.
\end{align*}
The corresponding two-parameter partial sum process $U_n(h)\colon [0,1]^2 \to \R$ is given by 	
\begin{align}
	U_n(h)(s,t)
	&:= \frac{1}{n(n-1)}\sum_{i=1}^{\lf ns\rf}\sum_{j=1\atop i\not= j}^{\lf nt\rf} h(X_i,X_j)\label{Eq:PartialSumProcessOfUn(h)}\\
	\begin{split}\label{Eq:Decomposition1}
	&= a_n(s,t) \theta 
		+ \frac{1}{n(n-1)} \sum_{i=1}^{\lf ns\rf}\sum_{j=1\atop i\not= j}^{\lf nt\rf} h_2(X_i,X_j)\\
		&\quad + \frac{1}{n}\sum_{i =1}^{\lf ns\rf} h_1(X_i) \ b_{n,i}(t)
		+ \frac{1}{n}\sum_{j =1}^{\lf nt\rf} h_1(X_j) \ b_{n,j}(s).
	\end{split}
\end{align}
The symmetric function $h_2$ yields the so-called degenerate part of the $U$-statistic, that is
\begin{align*}
	\E[ h_2(x,X_1) ] 
	%&= \E[h(x,X_1)] - \theta - \E[h_1(x)] - \E[h_1(X_1)]\\
	&= \E[h(x,X_1)] - \theta - \E[h(x,X')] + \theta = 0 \qquad \forall x\in M,
\end{align*}
because $\E[h_1(X_1)] = \theta -\theta = 0$.
Hence, we expect the functions $h_1$ to contribute most to $U_n(h)$.

In order to derive our FCLT, we need to strengthen the assumptions in two ways (we make this rigorous in Assumption~\ref{AssumptionFCLT} below): On the one hand, 
additionally to the approximation property \eqref{E:StandardLpM}, we assume a specific decay of the sequence $(\E [d(X_0, X_0^{(m)})])_m$.
On the other hand, we require some control on the regularity and growth of the kernel $h$ to insure that the degenerate part in \eqref{Eq:Decomposition1} 
asymptotically vanishes.

\begin{assumption}[\bfseries{p,q,$\varrho$}]\label{AssumptionFCLT} Let $p,q \geq 1$ be H\"older conjugates of each other, i.e., $1/p + 1/q = 1$, and let $\varrho\in (0,1]$.
	\begin{itemize}
	\item[(A1)] The kernel $h$ is $\varrho$-H\"older continuous w.r.t.\ the product metric on $M\times M$, i.e.,
	\begin{align}\label{Cond:L-Continuity_h}
		|h(x,y) - h(x',y')| 
		&\leq L~ d_{M\times M}((x,y), (x',y') )^\varrho  = L~ ( d(x,x') + d(y,y') ) ^\varrho 
	\end{align}
	for some $L> 0$. 
	Moreover, the kernel satisfies the moment condition 
	\begin{align}\label{Cond:Moment_h}
		\sup_{U,V \sim \p_X} \E\big[|h(U,V)|^q\big]^{1/q} =: K(q) < \infty,
	\end{align}
	where the supremum is taken over all couplings $(U,V)$ with each marginal distribution equal to $\p_X$.
	\item[(A2)]	Let $X=(X_t)_t$ be an $M$-valued $L^p$-$m$-approximable process, whose Fr{\'e}chet variance exists, i.e., we have $\E[d(X_t,x)^2] < \infty$ 
	for some $x\in M$. Moreover, $X$ is approximable in the sense that
	\begin{align}\label{Cond:Approximation}
		\sum_{m=1}^{\infty} m~\E \big[d(X_0, X_0^{(m)})^{\varrho p}\big]^{1/p} = \sum_{k=1}^{\infty} \sum_{m\geq k}\E\big[d(X_0,X_0^{(m)})^{\varrho p}\big]^{1/p} < \infty. 
	\end{align}
	\end{itemize}
\end{assumption}

\begin{remark} Assumption~{\bfseries (p,q,$\varrho$)} entails the Hölder continuity of $h_1$ and $h_2$, moreover, the approximation property at a linear rate can be guaranteed under a stronger polynomial rate. In detail we have the following. 
\begin{itemize}
	\item {\it Approximation property.} If $\varrho = 1$, the kernel is even Lipschitz continuous and we can neglect $\varrho$ in \eqref{Cond:Approximation}: the process $X$ is 
	$L^p$-$m$-approximable {\it at a linear rate $m$}.\\
	
	If, however, $0<\varrho < 1$, condition \eqref{Cond:Approximation} holds provided that 
	\begin{align}
		\sum_{m=1}^{\infty} m^{(1+\sigma)/\varrho}~\E \big[d(X_0, X_0^{(m)})^{p}\big]^{1/p} < \infty
	\end{align}
	for some $\sigma>1-\varrho$, i.e., the process $X$ is $L^p$-$m$-approximable {\it at a polynomial rate $m^{(1+\sigma)/\varrho}$}. Indeed, applying Jenssen's and H\"older's 
	inequality in this latter case shows
	\begin{align*}
		&\sum_{m=1}^{\infty} m~\E \big[d(X_0, X_0^{(m)})^{\varrho p}\big]^{1/p} \\
		&\le \sum_{m=1}^{\infty} \left( m^{1+\sigma}~\E \big[d(X_0, X_0^{(m)})^{p}\big]^{\varrho/p} \right)  m^{-\sigma} \\
		&\le \left( \sum_{m=1}^{\infty}  m^{(1+\sigma)/\varrho}~\E \big[d(X_0, X_0^{(m)})^{p}\big]^{1/p} \right)^\varrho \ 
		\left( \sum_{m=1}^{\infty}  m^{-\sigma/(1-\varrho)} \right) ^{1-\varrho}.
	\end{align*}
	\item {\it H\"older continuity.} $h_1$ inherits the $\varrho$-H\"older continuity of the kernel $h$ with the same H\"older constant $L$ because
\begin{align}
	|h_1(x) - h_1(y)| 
	&\le  \E \big[ | h(x,X') - h(y,X') | \big]\nonumber\\
	&\le L~ \E\big[ \| (d(x,y), d(X',X')) \|_r^\varrho \big] \le L~ d(x,y)^\varrho. \label{Eq:LipContinuityH1}
\end{align}
Moreover, the function $h_2$ is $\varrho$-H\"older continuous as well: 
\begin{align}
	&|h_2(x,y) - h_2(x',y')|\nonumber\\
	&\le | h(x,y) - h(x',y') | + |h_1(x) - h_1(x')| + |h_1(y) - h_1(y') |\nonumber\\
	&\le L~\Big( \| (d(x,x'), d(y,y')) \|_r^\varrho + d(x,x')^\varrho + d(y,y')^\varrho \Big)\nonumber\\
	&\le L~\Big( (d(x,x') + d(y,y') )^\varrho + d(x,x')^\varrho + d(y,y')^\varrho \Big)\nonumber\\
	&\le 3L \Big( d(x,x') + d(y,y') \Big)^\varrho, \label{Eq:LipContinuityH2}
\end{align}  
where the last inequality follows from the triangle inequality.
\end{itemize}
\end{remark}

\begin{theorem}[FCLT for $U_n(h)$]\label{Thrm:FCLTforUStatistic}
Granted Assumption \ref{AssumptionFCLT}~$(p,q,\varrho)$ is satisfied for some $p\geq 2$. Then
\begin{align*}%\label{E:GaussianU0}
	\Big( \sqrt{n}\big(U_n(h)(s,t)-st\theta\big) ~\big|~ 0\leq s,t\leq 1 \Big) \Rightarrow Y
\end{align*}
in $(D_2,\fD_2)$, where $Y$ is a zero mean Gaussian process given by
\begin{align*}
	Y(s,t) = t \cW_{\Gamma}(s) + s \cW_{\Gamma}(t), \qquad 0\le s,t\le 1,
\end{align*}
with $\cW_{\Gamma} = \sqrt{\Gamma}B$ for a standard Brownian motion $B$ and $\Gamma \coloneqq \sum_k \Cov(h_1(X_0),h_1(X_k))$.
\end{theorem}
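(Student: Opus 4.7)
The plan is to start from the Hoeffding decomposition already recorded in \eqref{Eq:Decomposition1}. Multiplying by $\sqrt{n}$, four summands arise: the deterministic piece $\sqrt{n}\,a_n(s,t)\theta$, a degenerate bilinear piece $\sqrt{n}\,D_n(s,t)$ with
\begin{align*}
D_n(s,t) \coloneqq \frac{1}{n(n-1)}\sum_{i=1}^{\lfloor ns\rfloor}\sum_{j=1,j\neq i}^{\lfloor nt\rfloor} h_2(X_i,X_j),
\end{align*}
and two linear partial-sum pieces involving $h_1$. Since $|a_n(s,t)-st|\le 2/n$ uniformly in $(s,t)$, $\sqrt{n}(a_n(s,t)-st)\theta$ is uniformly $O(n^{-1/2})$ and can be discarded. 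The strategy is therefore to identify the Gaussian limit of the two linear pieces and to prove that $\sqrt{n}\,D_n$ is asymptotically negligible in the two-parameter Skorokhod space.

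For the linear terms I would first note that $b_{n,i}(t)=t+O(1/n)$ uniformly in $i$, so
\begin{align*}
\sqrt{n}\cdot\frac{1}{n}\sum_{i=1}^{\lfloor ns\rfloor} h_1(X_i)\,b_{n,i}(t) = \frac{t}{\sqrt{n}}\sum_{i=1}^{\lfloor ns\rfloor} h_1(X_i) + o_P(1)
\end{align*}
uniformly in $(s,t)$, and symmetrically for the fourth summand with $s$ and $t$ exchanged. Next, the H\"older continuity of $h_1$ recorded in \eqref{Eq:LipContinuityH1}, combined with the Bernoulli-shift structure of $X$, shows that the real-valued process $(h_1(X_t))_t$ inherits the $L^p$-$m$-approximability of $X$ via the surrogate $h_1(X_t^{(m)})$ with coupling bound controlled by $L$ times the sum in \eqref{Cond:Approximation}; the moment condition \eqref{Cond:Moment_h} together with $p\ge 2$ secures $\E[h_1(X_0)^2]<\infty$. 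The scalar FCLT for $L^p$-$m$-approximable sequences then yields $n^{-1/2}\sum_{i=1}^{\lfloor n\cdot\rfloor} h_1(X_i)\Rightarrow \cW_\Gamma(\cdot)$ in $D[0,1]$ with the stated variance $\Gamma$. Because both linear sums are continuous functionals of the single partial-sum process evaluated at $s$ and $t$, joint convergence on $[0,1]^2$ is immediate, and the continuous mapping theorem delivers the combined limit $t\cW_\Gamma(s)+s\cW_\Gamma(t)$.

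The heart of the proof is the negligibility claim $\sup_{s,t\in[0,1]}\sqrt{n}\,|D_n(s,t)|=o_P(1)$. Here I would couple each $X_i$ with its $m$-dependent surrogate $X_i^{(m)}$ and split
\begin{align*}
h_2(X_i,X_j) = h_2(X_i^{(m)},X_j^{(m)}) + \bigl(h_2(X_i,X_j)-h_2(X_i^{(m)},X_j^{(m)})\bigr).
\end{align*}
The H\"older bound \eqref{Eq:LipContinuityH2} with H\"older's inequality and the summability in \eqref{Cond:Approximation} control the remainder uniformly in $(s,t)$, yielding $o(n^{-1/2})$ after dividing by $n(n-1)$ and letting $m=m(n)\to\infty$ sufficiently slowly. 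For the principal $m$-dependent part, one exploits the canonical identity $\E[h_2(x,X_0)]=0$ to show that only quadruples $(i,j,k,\ell)$ whose index pairs $\{i,j\}$ and $\{k,\ell\}$ lie within distance $m$ contribute non-trivially to $\E\bigl|\sum h_2(X_i^{(m)},X_j^{(m)})\bigr|^2$; a direct count gives variance of order $n^2 m^2$, hence a pointwise bound $D_n(s,t)=O_P(m/n)$, which beats $n^{-1/2}$ provided $m=o(n^{1/2})$. Upgrading this to a uniform estimate proceeds by chaining on the dyadic grid $\{k/n,\ell/n\}$ and exploiting the monotonicity of the double sum in $(s,t)$ to bound rectangle increments via the same second-moment argument.

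Assembling the deterministic statement for $a_n$, the Gaussian limit of the linear part, and the negligibility of $\sqrt{n}\,D_n$ via Slutsky's theorem yields the claim. The main obstacle is the uniform control of $\sqrt{n}\,D_n$: under only $L^p$-$m$-approximability the coupling remainders enter non-linearly through the double sum and must be tracked carefully through the chaining, which is precisely where the strengthened summability of \eqref{Cond:Approximation} beyond the ordinary $L^p$-$m$-approximability \eqref{E:StandardLpM} is genuinely used.
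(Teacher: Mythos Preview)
Your overall architecture---Hoeffding decomposition \eqref{Eq:Decomposition1}, dispose of $\sqrt{n}(a_n(s,t)-st)\theta$ uniformly, identify the Gaussian limit of the $h_1$-partial sums via the scalar FCLT for $L^2$-$m$-approximable sequences, and show that the degenerate $h_2$-part is negligible---coincides with the paper's. Your treatment of the linear part is essentially the same as the paper's, which also reduces to the one-parameter Donsker theorem for $(h_1(X_t))_t$ after checking $L^2$-$m$-approximability via \eqref{Eq:LipContinuityH1}.

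The genuine divergence is in the degenerate part, and here your sketch has two real gaps. First, the sentence ``exploiting the monotonicity of the double sum in $(s,t)$'' is not available: $h_2$ is centred and signed, so $D_n(s,t)$ is \emph{not} monotone in either argument, and you cannot pass from the $n^2$ grid values to the supremum by a sandwiching trick. A proper chaining therefore needs moment bounds on \emph{rectangle increments} of $\sqrt{n}\,D_n$, which is precisely what the paper isolates as Theorem~\ref{T:MomentCondition}. Second, your variance count ``$O(n^2m^2)$ contributing quadruples, each $O(1)$'' tacitly uses $\E[h_2(U,V)^2]<\infty$; under Assumption~\ref{AssumptionFCLT}$(p,q,\varrho)$ with $p>2$ you only have $\|h\|_q<\infty$ for $q=p/(p-1)<2$, so the individual expectations $\E[h_2(\cdot,\cdot)h_2(\cdot,\cdot)]$ are not uniformly bounded a priori. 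The paper never bounds these by a constant: it instead compares each $\E[h_2(X_i,X_j)h_2(X_k,X_\ell)]$ with a coupled version that vanishes by degeneracy, using a \emph{variable} coupling level equal to the largest index gap (cases $(\alpha)$, $(\beta)$, $(\gamma)$ in the proof of Theorem~\ref{T:MomentCondition}), which produces the summable factor $\E[d(X_0,X_0^{(\cdot)})^{\varrho p}]^{1/p}$ directly and is exactly where \eqref{Cond:Approximation} enters.

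Concretely, the paper does not attempt $\sup_{s,t}\sqrt{n}|D_n(s,t)|=o_P(1)$ by a direct argument. It proves weak convergence $\sqrt{n}\,D_n\Rightarrow 0$ in $(D_2,\fD_2)$ via Bickel--Wichura: fidi convergence to $0$ follows from the global second-moment bound of Corollary~\ref{T:VarDegenerateU}, and tightness from the block-increment bound $\E[|\sqrt{n}\,D_n(A)|^2]\le C|A|^{3/2}$ of Theorem~\ref{T:MomentCondition}. Your route could likely be repaired in the special case $p=q=2$ (where $\E[h_2^2]<\infty$), but even then the uniformity step would end up reproducing an increment estimate equivalent to Theorem~\ref{T:MomentCondition}; for general $p>2$ the fixed-$m$ coupling plus naive counting does not suffice.
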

\begin{proof}[Proof of Theorem~\ref{Thrm:FCLTforUStatistic}]
We start with the decomposition given in \eqref{Eq:Decomposition1}
	\begin{align*}
	\sqrt{n}U_n(h)(t,s) &= \sqrt{n}a_n(s,t) \theta + \sqrt{n}U_n(h_2)(s,t)\\ 
	& + \frac{1}{\sqrt{n}} \sum_{i =1}^{\lf ns\rf} h_1(X_i) \ b_{n,i}(t)
	+ \frac{1}{\sqrt{n}} \sum_{j =1}^{\lf nt\rf} h_1(X_j) \  b_{n,j}(s). 
	\end{align*}
Clearly, $\sup_{t\ge 2/n} \max_{i\le n} | b_{n,i}(t) - t| \le 3/(n-1)\to 0$ as $n\to \infty$. Also, note that $(h_1(X_t))_t$ is again an $L^2$-$m$-approximable one-dimensional process: We have due to the $\varrho$-H\"older continuity 
of $h_1$ and the condition in \eqref{Cond:Approximation}
\begin{align*}
	\sum_{m=1}^\infty \E[ (h_1(X_0)-h_1(X_0^{(m)}))^2]^{1/2} 
	&\le \sum_{m=1}^\infty L~ \E[ d(X_0,X_0^{(m)})^{2\varrho}]^{1/2}\\
	&\le L \sum_{m=1}^\infty m\ \E[ d(X_0,X_0^{(m)})^{p\varrho}]^{1/p}
	< \infty.
\end{align*}
Moreover, $\E[h_1(X_t)]=0$ and we have  that $\E[ d(X_t,x)^{2} ]<\infty$ for some $x\in M$ by Assumption~\ref{AssumptionFCLT}, thus,
\begin{align*}
	\E[|h_1(X_t)|^2] &\le 2 \E[ |h_1(X_t) - h_1(x)|^2 ] + 2 h_1(x)^2 \\
	&\le 2L^2 \E[ d(X_t,x)^{2\rho} ]  + 2 h_1(x)^2 < \infty.
\end{align*}
Hence, an application of Theorem \ref{T:2ParaDonsker} yields that we have the following functional weak convergence in the Skorokhod space $(D_2,\fD_2)$
\begin{align*}
	\left(\frac{1}{\sqrt{n}} \sum_{i =1}^{\lf ns\rf} h_1(X_i) b_{n,i}(t)
	+ \frac{1}{\sqrt{n}} \sum_{j =1}^{\lf nt\rf} h_1(X_j) \ b_{n,j}(s)\Big|~ 0\leq s,t\leq 1 \right)
	\Rightarrow Y.
\end{align*}
Furthermore observe that $\sup_{s,t\ge 2/n} \sqrt{n}|a_n(s,t) - st| \cdot |\theta| \le 3|\theta|\sqrt{n}/(n-1)\to 0$ as $n\to\infty$. Indeed, if $s\le t$, then $\sqrt{n}(a_n(s,t) - st)$ is bounded from above by
\begin{align*}
	\sqrt{n}\left( \frac{ns(nt-1)}{n(n-1)} -st \right)
	= \sqrt{n} \frac{s(t-1)}{n-1}  \to 0
\end{align*}
also if $s\le t$, then $\sqrt{n}(a_n(s,t) - st)$ bounded from below by 
\begin{align*}
	\sqrt{n}\left( \frac{(ns-1)(nt-2)}{n(n-1)} -st \right)
	= \sqrt{n} \frac{s(t-1)}{n-1} + \sqrt{n} \frac{2/n-s-t}{n-1} \to 0.
\end{align*} 
If $t<s$, the calculations work in a similar fashion.

Thus it remains to show that the degenerate part $Z_n:=\sqrt{n}U_n(h_2)$ converges weakly in $(D_2,\fD_2)$ to the zero process. To that end 
we make use of the convergence criteria laid out in detail in Bickel and Wichura \cite{bickel1971convergence}: We use the corollary given after Theorem~2 combined with the tightness 
condition of Theorem~$3$ in the very paper. More precisely, we verify that
\begin{itemize}
	\item[(1)] the finite-dimensional distributions of $Z_n$ converge to those of the zero process; 
	\item[(2)] $\displaystyle\E[|Z_n(A)| |Z_n(B)| ] \le C |A|^{3/4} |B|^{3/4}$ for all \emph{neighboring} blocks $A,B \subseteq [0,1]^2$ for a certain constant $C\in\R_+$.
\end{itemize}
Here, two blocks $A=( s_1,t_1 ] \times (s_2,t_2]$, $0\leq s_i < t_i \leq 1$, and 
$B=( u_1,v_1 ] \times (u_2,v_2]$, $0\leq u_i < v_i \leq 1$, are said to be neighboring if they have a common edge. 
Adapting the notation in Bickel and Wichura \cite{bickel1971convergence} we write
\begin{align}\label{Eq:2Dincrement}
	X(A) := X(s_1,s_2) - X(t_1,s_2) - X(s_1,t_2) + X(t_1,t_2)
\end{align} 
for the $2$-dimensional increment of a process $X=(X(s,t))_{s,t\in[0,1]} \in D_2$ over a block $A$.\\[5pt]

{\it Statement (1)}: Observe that for $m\in\N$, $\alpha_1,\ldots,\alpha_m\in\R$ and $\epsilon>0$ we have as a consequence of
Corollary~\ref{T:VarDegenerateU} and Markov's inequality
\begin{align*}
	&\p\Big( \Big| \sum_{k=1}^m \alpha_k Z_n(s_k,t_k) \Big| \ge \epsilon \Big)\le \epsilon^{-1} \ \E\Big[ \Big| \sqrt{n} \sum_{k=1}^m \alpha_k U_n(h_2)(s_k,t_k) \Big|  \Big] \\
	&\le \epsilon^{-1} \ \E\Big[ \Big| \sqrt{n} \sum_{k=1}^m \alpha_k U_n(h_2)(s_k,t_k) \Big|^2 \Big]^{1/2}  
	\le \frac{\sqrt{n}}{\epsilon} \  \ \sum_{k=1}^m |\alpha_k| \ \E[ |  U_n(h_2)(s_k,t_k) |^2 ]^{1/2} \\
	&= \frac{\sqrt{n}}{\epsilon} \ \sum_{k=1}^m |\alpha_k| \ 
		 \E \Big[ \Big( \frac{1}{n(n-1)} \sum_{i=1}^{\lf ns_k\rf}\sum_{j=1\atop i\not= j}^{\lf nt_k\rf} h_2(X_i,X_j)  \Big)^2\Big]^{1/2} \\
	&\leq  \frac{\sqrt{n}}{\epsilon} \ \sum_{k=1}^m 
	\frac{ |\alpha_k|}{n(n-1)} \left(\sum_{i,j=1\atop i\not= j}^n \sum_{u,v=1\atop u\not= v}^n \big|\E[ h_2(X_i,X_j)h_2(X_u,X_v) ]\big| \right)^{1/2}	 \lesssim \frac{\|\alpha\|_1 }{\epsilon \sqrt{n}}.
\end{align*}
Hence it follows from the Cram\'er-Wold device that the finite dimensional 
distributions of $Z_n$ converge weakly to those of the zero process.\\

{\it Statement (2)}: Relying on the Cauchy-Schwarz inequality, it is sufficient to verify the moment condition 
\begin{align}\label{Eq:MomentCondition1}
	\E[ | Z_n(A) |^2 ] \le C |A|^{3/2}
\end{align} 
for all blocks $A$ in the unit square. This condition is verified in Theorem~\ref{T:MomentCondition}.
\end{proof}

The proofs of the next two theorems are postponed to Section~\ref{Subsection_DetailsUStats}.
\begin{theorem}[Two-parameter Donsker type FCLT]\label{T:2ParaDonsker}
Let $Z = (Z_t)_{t\in\Z}$ be a real-valued $L^2$-$m$-approximable process such that $\E[Z_t]=0$ and $\E[Z_t^2] < \infty$. 
Define the partial-sum process
\begin{align}\label{E:2ParaDonsker0}
	Y_n\colon [0,1]^2\to\R,\quad (s,t) \mapsto \frac{t}{\sqrt{n}} \sum_{i=1}^{\floor{ns}} Z_i + \frac{s}{\sqrt{n}} \sum_{i=1}^{\floor{nt}} Z_i.
\end{align}
Then $Y_n\Rightarrow Y$ in the two-dimensional Skorokhod space $(D_2,\fD_2)$, where the Gaussian process $Y$ is given by 
\begin{align*}
	Y(s,t) = t \cW_{\Gamma}(s) + s \cW_{\Gamma}(t) 
\end{align*}
with $\cW_{\Gamma} = \sqrt{\Gamma}B$, for a standard Brownian motion $B$ and $\Gamma = \sum_{h\in\Z} \Cov(Z_0,Z_h)$.
\end{theorem}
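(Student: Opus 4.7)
The plan is to reduce the statement to the classical one-dimensional Donsker-type FCLT via a continuous mapping argument, exploiting the structural identity
\begin{align*}
Y_n(s,t) \;=\; t\, W_n(s) + s\, W_n(t), \qquad W_n(s) \coloneqq \frac{1}{\sqrt{n}} \sum_{i=1}^{\lfloor ns\rfloor} Z_i,
\end{align*}
which realizes the two-parameter partial sum process $Y_n$ as a deterministic transformation of the ordinary one-parameter partial sum process $W_n$ living in the Skorokhod space $D[0,1]$ (equipped with the $J_1$-topology).

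First I would invoke the classical Donsker-type FCLT for $L^2$-$m$-approximable sequences, namely $W_n \Rightarrow \cW_{\Gamma}$ in $D[0,1]$ with $\cW_{\Gamma} = \sqrt{\Gamma}B$ and $\Gamma = \sum_{h\in\Z}\Cov(Z_0,Z_h)$. The standard proof route proceeds by approximating $(Z_t)_t$ through its $m$-dependent truncations $(Z_t^{(m)})_t$, applying the invariance principle for $m$-dependent sequences, and letting $m\to\infty$ using the summability hypothesis built into $L^2$-$m$-approximability. The assumptions $\E[Z_t] = 0$, $\E[Z_t^2] < \infty$ and $\sum_m \E[(Z_0 - Z_0^{(m)})^2]^{1/2}<\infty$ ensure both absolute summability of the covariances (so $\Gamma$ is well-defined and finite) and the tightness of $\{W_n\}$.

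Next, I would consider the map $\Phi\colon D[0,1] \to D_2$ defined by $\Phi(w)(s,t) = t\,w(s) + s\,w(t)$. Any function of this form is càdlàg in each argument separately, hence $\Phi(w)\in D_2$. I would establish continuity of $\Phi$ at every $w\in C([0,1])$: if $w_n\to w$ in the $J_1$-topology on $D[0,1]$ and $w$ is continuous, then the convergence is in fact uniform, and
\begin{align*}
\sup_{(s,t)\in[0,1]^2} \bigl| \Phi(w_n)(s,t)-\Phi(w)(s,t) \bigr| \;\le\; 2\,\|w_n-w\|_\infty \;\longrightarrow\; 0.
\end{align*}
Uniform convergence on $[0,1]^2$ implies convergence in $(D_2,\fD_2)$. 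Since $\cW_{\Gamma}$ has continuous sample paths almost surely, the continuous mapping theorem produces $Y_n = \Phi(W_n) \Rightarrow \Phi(\cW_{\Gamma}) = Y$ in $(D_2,\fD_2)$, which is the claim.

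The main obstacle is essentially bookkeeping: one has to verify that the continuity argument for $\Phi$ is compatible with the specific two-parameter Skorokhod topology on $D_2$ used in Bickel and Wichura \cite{bickel1971convergence}, which comes down to the standard fact that uniform convergence on $[0,1]^2$ to a continuous limit implies $\fD_2$-convergence. Notably, no separate tightness estimate or Bickel–Wichura moment bound on the two-parameter increments of $Y_n$ is required, since tightness is inherited from $\{W_n\}$ through the continuous map $\Phi$.
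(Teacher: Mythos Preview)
Your proposal is correct and follows essentially the same route as the paper: both realize $Y_n$ as $\Phi(W_n)$ for $\Phi(w)(s,t)=t\,w(s)+s\,w(t)$, invoke the one-dimensional FCLT $W_n\Rightarrow\cW_\Gamma$ in $D_1$ (the paper cites Theorem~A.1 of Aue et al.\ \cite{aue2009break}), and conclude via the continuous mapping theorem. The only cosmetic difference is that the paper verifies continuity of $\Phi$ on all of $D_1$ by lifting a time change $\lambda^*\in\Lambda_1$ to $(\lambda^*,\lambda^*)\in\Lambda_2$ and bounding $d_2(\Phi(x),\Phi(y))\le 2(1+\sup|x|)\,d_1(x,y)$, whereas you argue continuity only at $w\in C[0,1]$ via the standard fact that $J_1$-convergence to a continuous limit is uniform; either suffices since $\cW_\Gamma$ has continuous paths.
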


\begin{theorem}[Moment condition for $U_n(h_2)$]\label{T:MomentCondition}
Granted Assumption~\ref{AssumptionFCLT} (p,q,$\rho$) is satisfied there is a universal constant $C\in\R_+$ such that for all blocks $A = (s,u]\times (t,v] \subset [0,1]^2$ 
\begin{align}\label{E:MomentCondition0}
	\E\left[ \left|\sqrt{n} \ U_n(h_2)(A)  \right|^2 \right] \le C |A|^{3/2},
\end{align}
where $U_n(h_2)(A)$ is the $2$-dimensional increment of $U_n(h_2)$ over $A$ in the sense of \eqref{Eq:2Dincrement}, i.e., 
\begin{align*}
	U_n(h_2)(A) = U_n(h_2)(u,v)+U_n(h_2)(s,t) - U_n(h_2)(u,t) - U_n(h_2)(s,v).
\end{align*}
\end{theorem}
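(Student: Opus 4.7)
The plan is to expand the second moment $\E[|\sqrt{n}\, U_n(h_2)(A)|^2]$ into a four-fold sum over indices and exploit the degeneracy of $h_2$ together with the $L^p$-$m$-approximability of $X$ to show that the dominant contribution comes from ``paired'' quadruples. Write $I = \{\lf ns\rf + 1, \ldots, \lf nu\rf\}$ and $J = \{\lf nt\rf + 1, \ldots, \lf nv\rf\}$ for the index sets of the block $A$. Since $U_n(h_2)(A) = \frac{1}{n(n-1)} \sum_{i \in I,\, j \in J,\, i\neq j} h_2(X_i, X_j)$, one obtains
\begin{align*}
	\E\big[|\sqrt{n}\, U_n(h_2)(A)|^2\big]
	= \frac{1}{n(n-1)^2} \sum_{\substack{i,i' \in I,\, j,j' \in J\\ i\neq j,\, i'\neq j'}} \E\big[h_2(X_i, X_j)\, h_2(X_{i'}, X_{j'})\big].
\end{align*}

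I would classify each quadruple $(i, j, i', j')$ according to the coincidence pattern of the multiset $\{i, j, i', j'\}$ and the maximal gap $g$ between consecutive entries of its sorted version. The \emph{paired} class, where $\{i,j\} = \{i',j'\}$ as multisets, has cardinality at most $O(|I||J| + |I\cap J|^2) = O(n^2 |A|)$, and each summand is bounded by a constant multiple of $K(q)^2$ via the triangle inequality $|h_2| \le |h| + |\theta| + 2|h_1|$ and the moment bound in Assumption~\ref{AssumptionFCLT}(A1). This contributes $O(|A|/n)$, which is $O(|A|^{3/2})$ whenever $|A|$ is not below the $1/n^2$-scale; for smaller blocks the index sets degenerate and the sum reduces to a negligible number of terms.

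For every other quadruple I would replace each index $k \in \{i, j, i', j'\}$ that is isolated from the others by at least $\ell$ steps with its $\ell$-dependent approximation $X_k^{(\ell)}$. By the $\varrho$-Hölder continuity of $h_2$ from \eqref{Eq:LipContinuityH2} combined with Hölder's inequality, each such replacement introduces an error of order $L\, K(q)\, \E[d(X_0, X_0^{(\ell)})^{\varrho p}]^{1/p}$. Once decoupled, the isolated index makes its variable independent of the remaining three, so conditioning and the degeneracies $\E[h_2(x, X')] = \E[h_2(X', y)] = 0$ force the expectation to vanish. The residual quadruples---those in which every index lies within distance $\ell$ of some other---number at most $O(|I||J|\, \ell^2)$. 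Balancing these contributions by choosing $\ell$ appropriately in $n$ and $|A|$ and invoking the summability assumption \eqref{Cond:Approximation} to control the aggregated approximation error yields the target bound $C|A|^{3/2}$ with $C$ depending only on $L$, $K(q)$, $\varrho$, $p$, and the $m$-approximability series.

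The main obstacle is to carry out the classification and $m$-dependent replacement argument precisely enough to secure the $|A|^{3/2}$ exponent rather than the cruder $|A|$ that a purely diagonal analysis would yield. Specifically, the approximation error multiplied by the number of off-diagonal quadruples must remain below $|A|^{3/2}$ uniformly in $A$; the weighting factor $m$ inside \eqref{Cond:Approximation} is precisely what permits this, and keeping track of the constants $L$ and $K(q)$ through each step is essential to obtain a universal bound independent of the position and size of $A$.
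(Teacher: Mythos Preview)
Your overall strategy is sound and aligns with the paper's: expand the second moment into a four-fold sum, exploit the degeneracy $\E[h_2(x,X')]=0$ via the $m$-dependent coupling, and observe that the resulting bound $|A|/n$ is at most $|A|^{3/2}$ once one reduces to blocks with corners on the grid $\Pi_n=\{(i/n,j/n)\}$ (so that $|A|\ge n^{-2}$). You correctly identify that the weight $m$ in \eqref{Cond:Approximation} is what makes the summation close.

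Where your route diverges from the paper is in the organization of the quadruple sum. The paper first performs a \emph{geometric} reduction: any block is split into at most four pieces, each of which is either ``above the diagonal'' ($s<u\le t<v$) or ``on the diagonal'' ($(s,u]\times(s,u]$). Above the diagonal the index sets $I$ and $J$ are disjoint, so coincidences never occur and only the relative ordering of $i,k\in I$ and $j,\ell\in J$ matters. Then, for each fixed quadruple, the paper takes the \emph{largest gap} $g$ among the three consecutive spacings of the sorted indices and replaces the variable on the far side of that gap by its $g$-dependent coupling; degeneracy kills the resulting expectation, and the replacement error is $\lesssim \E[d(X_0,X_0^{(g)})^{\varrho p}]^{1/p}$. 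Summing over all quadruples with a given largest gap $g$ contributes at most $|I|\cdot|J|\cdot g$ terms, so the full sum is bounded by $|I|\,|J|\sum_g g\,\E[d(X_0,X_0^{(g)})^{\varrho p}]^{1/p}\lesssim n^2|A|$, and after normalization one obtains $|A|/n\le |A|^{3/2}$ directly.

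Your proposal instead fixes a single threshold $\ell$, splits quadruples into ``has an $\ell$-isolated index'' versus ``all indices within $\ell$ of another'', and speaks of balancing $\ell$ against $n$ and $|A|$. This is a legitimate alternative in principle, but it is both less sharp and less transparent: the cardinality estimate $O(|I||J|\ell^2)$ for the residual class is not obviously correct without the diagonal/off-diagonal split (with overlapping $I$ and $J$ the clustering patterns are more varied), and optimizing a global $\ell$ wastes information compared to using the quadruple-specific maximal gap. In particular, no balancing is actually needed to reach $|A|^{3/2}$---the per-quadruple argument already gives $|A|/n$ outright, and the exponent $3/2$ enters only through the trivial grid inequality $n^{-1}\le |A|^{1/2}$. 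Your concern that a ``purely diagonal analysis would yield the cruder $|A|$'' is therefore misplaced: the diagonal (paired) class already contributes $|A|/n$, and so does every other class once the per-gap coupling is used.
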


The proof of Theorem~\ref{T:MomentCondition} yields a straightforward corollary:

\begin{cor}[Second moment of $U_n(h_2)$]\label{T:VarDegenerateU}
Granted Assumption~\ref{AssumptionFCLT} (p,q,$\rho$) is satisfied there is a universal constant $C\in\R_+$, such that
\begin{align*}
	\sum_{1\leq i<j \leq n,\atop 1\leq k<\ell \leq n} | \E[ h_2(X_i,X_j) h_2(X_k,X_\ell) ] | \le C n^2.
\end{align*}
\end{cor}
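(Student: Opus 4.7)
The plan is to read off the claim from an intermediate step in the proof of Theorem~\ref{T:MomentCondition}, rather than from its final statement. Applied at $A=[0,1]^2$, the theorem only gives
\begin{align*}
	\E\bigl[\, n \, U_n(h_2)(1,1)^2 \,\bigr] = \frac{4}{n(n-1)^2}\sum_{1\le i<j\le n}\sum_{1\le k<\ell\le n} \E[h_2(X_i,X_j)h_2(X_k,X_\ell)] \le C,
\end{align*}
i.e., $\sum_{i<j,\,k<\ell} \E[\cdot] \le C n^3$ for the signed sum, which is weaker than the corollary's claim by a factor of $n$ and, more crucially, does not control the absolute-value sum at all. The $O(n^2)$ bound on $\sum |\E[\cdot]|$ must therefore come from the structural estimates used inside the proof of the theorem.

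My plan is to argue that the proof of Theorem~\ref{T:MomentCondition} in fact bounds the stronger quantity $\sum_{(i,j),(k,\ell)\in A} |\E[h_2(X_i,X_j) h_2(X_k,X_\ell)]|$ by $C n^2 |A|^{3/2}$ uniformly in $n$ and in blocks $A\subseteq [0,1]^2$; dividing by $n^2(n-1)^2$ and multiplying by $n$ then yields the theorem, while specializing to $|A|=1$ yields the corollary. The reason such an absolute-value bound emerges is structural: under Assumption~\ref{AssumptionFCLT}(A2), the only covariance-control mechanism available is coupling through the $m$-dependent approximation $X_t^{(m)}$, so every summand gets bounded in absolute value. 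Concretely, one partitions the quadruple sum over $(i,j,k,\ell)$ by the overlap pattern of $\{i,j\}$ and $\{k,\ell\}$ (all four distinct, one shared index, two shared indices) and by the maximal gap among the involved indices, replaces the most-separated variable by its $m$-dependent copy at a matching gap $m$, invokes the $\varrho$-H\"older continuity~\eqref{Eq:LipContinuityH2} of $h_2$ to control the replacement cost by $\E[d(X_0,X_0^{(m)})^{\varrho p}]^{1/p}$, and exploits the degeneracy $\E[h_2(x,X_1)]=0$ to annihilate the leading independent-pair contribution. Summing with the linear weight supplied by~\eqref{Cond:Approximation} produces the desired $C n^2 |A|^{3/2}$ bound.

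The main obstacle is verifying that the proof of Theorem~\ref{T:MomentCondition} is actually phrased in terms of absolute covariance estimates, rather than relying on any signed cancellation that would produce only the weaker $\sum_{i<j,k<\ell} \E[\cdot] \le C n^3$ bound. Given that $L^p$-$m$-approximability offers no signed-cancellation toolkit and the case-by-case coupling argument sketched above is effectively the only route, this absolute-value formulation is forced, and once recorded the corollary follows immediately by taking $|A|=1$.
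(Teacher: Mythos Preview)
Your proposal is correct and matches the paper's approach: the proof of Theorem~\ref{T:MomentCondition} indeed bounds the absolute-value sum $\sum |\E[h_2(X_i,X_j)h_2(X_k,X_\ell)]|$ via coupling and degeneracy (see \eqref{E:MomentCondition1}, \eqref{E:MomentConditionDiagonalBlock} and the ensuing case analysis), and the corollary is then obtained by specializing to the diagonal block $A=(0,1]^2$. One small inaccuracy: the intermediate estimate in the proof is actually $\sum|\E[\cdot]|\lesssim n^2|A|$ (not $n^2|A|^{3/2}$; the extra half-power of $|A|$ enters only afterwards via the grid constraint $|A|\ge n^{-2}$), but at $|A|=1$ this makes no difference.
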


\section{Mathematical details}\label{Section_MathematicalDetails}

%%%%%%%%%%%%%
%%%%%%%%%%%%%
\subsection{Two sample test for Fr{\'e}chet variances}

\begin{proof}[Proof of Theorem~\ref{T:FrechetVarianceJointConvergence}]	
The proof is divided into two steps.

{\it Step 1.} We verify the assumptions of Theorem~\ref{Thrm:WeakConvergenceTwoSampleRelevantDifference_General} for 
$X_i := W_r^2(\pd(\cX_i),\mu_\cX)$, resp., $Y_i := W_r^2(\pd(\cY_i),\mu_\cY)$. We show that $(X_i)_i$ and $(Y_i)_i$ inherit the $L^p$-$m$-approximation property from the underlying point cloud processes 
$(\cX_t)_t$ and $(\cY_t)_t$, which then implies a strong law of large numbers as well as an invariance principle for the corresponding partial sum processes.
Again, we only present the calculations for $(X_i)_i$: To that end, let 
	\begin{align*}
	X_i^{(m)} := W_r^2(\pd(\cX_i^{(m)}),~\mu_\cX), \qquad m\in\N
	\end{align*}
denote the corresponding approximation process, cf. Assumption \ref{A:FrechetVariances}(r) part (2).
Then, by the reverse triangle inequality,  
\begin{align*}
	&\E[| X_0 - X_0^{(m)} |^2]
	= \E[ |W_r^2(\pd(\cX_0), \mu_{\cX}) - W_r^2(\pd(\cX_0^{(m)}), \mu_{\cX})|^2 ]\\
	& \le \E\Big[ \{ 2 W_r(\pd(\cX_0), \mu_\cX) + W_r(\pd(\cX_0^{(m)}), \pd(\cX_0) ) \}^2 \\
	&\quad\qquad\qquad \cdot | W_r(\pd(\cX_0), \mu_\cX) - W_r(\pd(\cX_0^{(m)}), \mu_\cX) |^2 \Big]\\
	&\le \E\Big[ \{ 4 W_r^2(\pd(\cX_0), \mu_\cX) + 2 W_r^2(\pd(\cX_0^{(m)}), \pd(\cX_0) ) \} \cdot  W_r^2(\pd(\cX_0),\pd(\cX_0^{(m)})) \Big] \\
	&= 4 \E[ W_r^4(\pd(\cX_0), \mu_\cX)]^{1/2} \cdot \E[ W_r^4(\pd(\cX_0),\pd(\cX_0^{(m)})) ]^{1/2} \\
	&\quad + 2 \E[ W_r^4(\pd(\cX_0),\pd(\cX_0^{(m)})) ].
\end{align*}
Consequently, (since $\sqrt{a+b}\le \sqrt{a}+\sqrt{b}$ for $a,b\ge 0$),
\begin{align*}
	\sum_{m \ge 1} \E[| X_0 - X_0^{(m)} |^2]^{1/2}
	&\leq 2\E[ W_r^4(\pd(\cX_0), \mu_\cX)]^{1/4} \sum_{m \ge 1}\E[ W_r^4(\pd(\cX_0),\pd(\cX_0^{(m)})) ]^{1/4}\\
	&\quad+ \sqrt{2}\sum_{m \ge 1} \E[ W_r^4(\pd(\cX_0),\pd(\cX_0^{(m)})) ]^{1/2}.
\end{align*}
By assumption $\E[ W_r^4(\pd(\cX_0), \mu_\cX)]$ is finite and thus it remains to establish 
\begin{align*}
	\sum_{m \ge 1}\E[ W_r^4(\pd(\cX_0),\pd(\cX_0^{(m)})) ]^{1/4} < \infty.
\end{align*}
To that end we use Lemma~\ref{L:PDandHD}, which implies the following fundamental inequality between the Wasserstein distance $W_r$, resp., $W_B$ of two diagrams 
$\pd(U),\pd(V)$ and the Hausdorff distance $d_H$ of the underlying point clouds $U,V$: If $r<\infty$, then
\begin{align*}
	W_r(\pd(U), \pd(V) ) &\le W_B( \pd(U), \pd(V) ) \cdot \max\{ \# \pd(U),\# \pd(V) \}^{1/r} \\
	&\le 2 d_H(U,V) \cdot \max\{ \# \pd(U), \# \pd(V) \}^{1/r}.
\end{align*}
Thus, we have for any H\"older conjugates $p,q\geq 1$
\begin{align*}
	&\E[ W_r^4(\pd(\cX_0),\pd(\cX_0^{(m)})) ]^{1/4}\\
	&\leq 2\ \E[ d_H^{4}(\cX_0,\cX_0^{(m)}) \cdot \max\{ \# \pd(\cX_0), \# \pd(\cX_0^{(m)}) \}^{4/r} ]^{1/4}\\
	&\leq 2\ \E[ d_H^{4p}(\cX_0,\cX_0^{(m)}) ]^{1/(4p)}~\E[\max\{ \# \pd(\cX_0), \# \pd(\cX_0^{(m)}) \}^{4q/r}]^{1/(4q)}\\
	&\leq 2^{1+1/(4q)}\ \E[ d_H^{4p}(\cX_0,\cX_0^{(m)}) ]^{1/(4p)}~\E[ \# \pd(\cX_0)^{4q/r} ])^{1/(4q)}.
\end{align*}
Now, if we take $p=r/4$ (recall that $r> 4$ is characterized in Assumption~\ref{A:FrechetVariances}) and accordingly $q=r/(r-4)$, then 
\begin{align*}
	&\sum_{m \ge 1}\E[ W_r^4(\pd(\cX_0),\pd(\cX_0^{(m)})) ]^{1/4}\\
	&\leq 2^{5/4-1/r}\ \E[ \# \pd(\cX_0)^{4/(r-4)} ]^{1/4-1/r} \sum_{m \ge 1} \E[ d_H^{r}(\cX,\cX_0^{(m)}) ]^{1/r}
\end{align*}
and the RHS is finite by assumption. This shows that the one-dimensional processes $X_i = W_r^2(\pd(\cX_i),\mu_\cX)$ and $Y_i = W_r^2(\pd(\cY_i),\mu_\cY)$ enjoy
the $L^2$-$m$-approximation property.

Furthermore by assumption $\E[X_0] = \E[ W_r^2(\pd(\cX_0),\mu_{\cX}) ] < \infty$ as well as $\E[X_0^2] = \E[ W_r^4(\pd(\cX_0),\mu_{\cX}) ] < \infty$ and 
the same is true for $(Y_i)_i$. Hence the conditions of Theorem A.1 in Aue et al. \cite{aue2009break} are satisfied for the centered processes
$(X_i-\E[X_0])_i$, resp., $(Y_i-\E[Y_0])_i$ and therefore (w.r.t.\ the Skorokhod topology on $D_1$)
\begin{align*}
	\frac{1}{\sqrt{m}} \sum_{i=1}^{\floor{s m}} (X_i - \E[X_0]) &\Rightarrow \sqrt{\Gamma_\cX} B_1(s),\\
	\frac{1}{\sqrt{n}} \sum_{i=1}^{\floor{s n}} (X_i - \E[X_0]) &\Rightarrow \sqrt{\Gamma_\cY} B_2(s), \quad s \in [0,1],
\end{align*}
where $B_1$, $B_2$ are independent standard Brownian motions and
\begin{align*}
	\Gamma_\cX = \sum_{i\in\Z} \Cov( X_0,X_i ),\quad \Gamma_{\cY} = \sum_{i\in\Z} \Cov( Y_0,Y_i ).
\end{align*}
Moreover,
\begin{align*}
	\frac{1}{m} \sum_{i=1}^{ m} X_i \to \E[X_0] \quad a.s. \text{ and } \frac{1}{n} \sum_{i=1}^{ n} Y_i \to \E[Y_0] \quad a.s.
\end{align*}
by Corollary \ref{Cor:SLLNforL^2-m-processes}.
Altogether, we see that $(X_i)_i$ and $(Y_i)_i$ fulfill all requirements in Theorem \ref{Thrm:WeakConvergenceTwoSampleRelevantDifference_General}.

{\it Step 3.} Finally, we verify that the weak convergence carries over to the original processes. To that end, define
\begin{align*}
	\wt{\operatorname{Var}}(\cX)(s) = \frac{1}{\lf ms \rf} \sum_{i=1}^{\lf ms \rf} X_i = \frac{1}{\lf ms \rf} \sum_{i=1}^{\lf ms \rf} W_r^2(\pd(\cX_i),\mu_\cX), \qquad s \in[0,1],\\
	\wt{\V}(\cY)(s) = \frac{1}{\lf ns \rf} \sum_{i=1}^{\lf ns \rf} Y_i = \frac{1}{\lf ns \rf} \sum_{i=1}^{\lf ns \rf} W_r^2(\pd(\cY_i),\mu_\cY),\qquad s \in[0,1],
\end{align*}
Next, set
$$
	\wt D_{m,n}(s) = s(\wt{\operatorname{Var}}(\cX)(s) - \wt{\operatorname{Var}}(\cY)(s)), \qquad s\in [0,1],
$$
and $\wt D_{m,n} = \wt D_{m,n}(1)$ as well as
$$
	\wt V_{m,n} = \Big\{ \int_0^1 ( \wt D_{m,n}^2 (s) - s^2 \wt D_{m,n}^2)^2 \nu(\diff s) \Big\}^{1/2}.
$$
Then
$$
	\sqrt{m+n} (\wh V_{m,n} - \wt V_{m,n}) \le  \sqrt{2} \ \sqrt{m+n} \max_{s\in [0,1]} |\wh D_{m,n}^2 (s) - \wt D_{m,n}^2 (s)|
$$
by the reverse triangle-inequality and
$$
	\sqrt{m+n} \Big( (\wh D^2_{m,n} - D^2) - (\wt D^2_{m,n} - D^2) \Big) \le \sqrt{m+n}  \max_{s\in [0,1]} |\wh D_{m,n}^2 (s) - \wt D_{m,n}^2 (s)| .
$$
Now, we have
\begin{align*}
	&\sqrt{m} \ \max_{s\in [0,1]} |\wh D_{m,n}^2(s) - \wt D_{m,n}^2(s)| \\
	 &\le   \Big\{ \max_{s\in [0,1]} \sqrt{m} s\cdot |\wt{\operatorname{Var}}(\cX)(s) - \wh{\operatorname{Var}}(\cX)(s) | + \max_{s\in [0,1]} \sqrt{m} s\cdot |\wt{\operatorname{Var}}(\cY)(s) - \wh{\operatorname{Var}}(\cY)(s) | \Big\} \\
	&\quad \cdot   \Big\{ \max_{s\in [0,1]} s\cdot |\wt{\operatorname{Var}}(\cX)(s) + \wh{\operatorname{Var}}(\cX)(s) | + \max_{s\in [0,1]} s\cdot |\wt{\operatorname{Var}}(\cY)(s) + \wh{\operatorname{Var}}(\cY)(s) | \Big\}. 
\end{align*}
Consequently, the desired weak convergence for the random vector $\sqrt{m+n}( \wt D_{m,n}^2 - D^2, \wt V_{m,n} )$ follows if we show: For each $\delta,\epsilon>0$, there is a $C\in\R_+$ and $m_0\in\N$ such that
\begin{align}
	&\p\Big( \max_{s\in [0,1]} s \cdot |\wt{\operatorname{Var}}(\cX)(s) + \wh{\operatorname{Var}}(\cX)(s) | > C \Big) \le \epsilon, \label{E:FrechetTest-1} \\
	&\p\Big( \max_{s\in [0,1]} s \cdot |\wt{\operatorname{Var}}(\cY)(s) + \wh{\operatorname{Var}}(\cY)(s) | > C \Big) \le \epsilon \qquad \forall m\ge m_0 \nonumber
\end{align}
and
\begin{align}
	&\p\Big( \max_{s\in [0,1]} \sqrt{m} s \cdot |\wt{\operatorname{Var}}(\cX)(s) - \wh{\operatorname{Var}}(\cX)(s) | > \delta \Big) \le \epsilon, \label{E:FrechetTest-2}\\
	&\p\Big( \max_{s\in [0,1]}  \sqrt{m} s \cdot |\wt{\operatorname{Var}}(\cY)(s) - \wh{\operatorname{Var}}(\cY)(s) | > \delta \Big) \le \epsilon \qquad \forall m\ge m_0. \nonumber
\end{align}
Clearly, it is sufficient to focus on the case for $\cX$. We treat \eqref{E:FrechetTest-1} first and \eqref{E:FrechetTest-2} afterwards. To this end, we bound above the summands of $\wt{\operatorname{Var}}(\cX)(s) + \wh{\operatorname{Var}}(\cX)(s)$ by
\begin{align*}
	&| W_r^2(\pd(\cX_i), \wh{\mu}_\cX) + W_r^2(\pd(\cX_i), \mu_\cX) | \le  ( 3W^2_r(\pd(\cX_i), \mu_\cX) + 2W^2_r(\wh\mu_{\cX}, \mu_\cX) ).
\end{align*}
Thus, the left-hand side of the event in \eqref{E:FrechetTest-1} is at most
\begin{align*}
	&\max_{s\in [0,1]} s \cdot  \frac{1}{\floor{ms}} \sum_{i=1}^{\floor{ms}} \Big( 3W_r^2( \pd(\cX_i),\mu_\cX ) + 2W_r^2(\wh\mu_\cX,\mu_\cX) \Big)   \\
	&\lesssim \frac{1}{m}  \sum_{i=1}^{m} \Big( 3W_r^2( \pd(\cX_i),\mu_\cX ) + 2W_r^2(\wh\mu_\cX,\mu_\cX) \Big).   
\end{align*}
Applying Corollary~\ref{Cor:SLLNforL^2-m-processes} to the $L^2$-$m$-approximable sequence $ (W_r^2( \pd(\cX_i),\mu_\cX ))_i $ and relying on the fact that $W_r^2(\wh\mu_\cX,\mu_\cX) \to 0$ in probability ($m\to\infty$), verifies \eqref{E:FrechetTest-1}.

The left-hand side of the event in the probability in \eqref{E:FrechetTest-2} is at most
\begin{align}
	&\max_{0\le s\le m^{-1/2} } \frac{1}{\sqrt{m}}  \sum_{i=1}^{\floor{ms} }  \Big( W_r(\pd(\cX_i),\mu_\cX) + W_r(\mu_\cX,\wh\mu_\cX)  \Big) W_r(\wh\mu_\cX,\mu_\cX) \nonumber \\
	&\quad + \max_{ m^{-1/2} \le s \le 1 }  \frac{1}{\sqrt{m}} \Big| \sum_{i=1}^{\floor{ms} }  W^2_r(\pd(\cX_i),\mu_\cX) - W^2_r(\pd(\cX_i),\wh\mu_\cX)  \Big| \nonumber \\
	&\le  \frac{1}{\sqrt{m}} \sum_{i=1}^{\floor{\sqrt{m} } }  \Big( W_r(\pd(\cX_i),\mu_\cX) + W_r(\mu_\cX,\wh\mu_\cX)  \Big) W_r(\wh\mu_\cX,\mu_\cX) \label{E:FrechetTest-3} \\
	&\quad+ \max_{\floor{\sqrt{m}}\le k\le m} \frac{1}{\sqrt{k}} \Big| \sum_{i=1}^{k }  W^2_r(\pd(\cX_i),\mu_\cX) - W^2_r(\pd(\cX_i),\wh\mu_\cX)  \Big| \label{E:FrechetTest-4}
\end{align}
The term in \eqref{E:FrechetTest-3} vanishes by the strong law of large numbers from  Corollary~\ref{Cor:SLLNforL^2-m-processes} and the assumption that $ W_r(\wh\mu_\cX,\mu_\cX) \to 0$ in probability for $m\to\infty$.

The term in \eqref{E:FrechetTest-4} vanishes by Assumption~\ref{A:FrechetVariances}.This verifies \eqref{E:FrechetTest-2}.
\end{proof}

\begin{theorem}\label{Thrm:WeakConvergenceTwoSampleRelevantDifference_General}
Let $X_1,\ldots,X_m$ and $Y_1,\ldots,Y_n$ be two samples stemming from independent and real-valued stationary processes 
$(X_t)_t$, resp., $(Y_t)_t$ with mean $\E[X_0]$, resp., $\E[Y_0]$. Furthermore assume that the partial sum processes $\ol{X}_m(s) = \sum_{i=1}^{\floor{ms}} X_i$
and $\ol{Y}_n(s) = \sum_{i=1}^{\floor{ns}} Y_i$, $s \in [0,1]$, satisfy a SLLN, i.e., almost surely
\begin{align}
\label{E:MaxConvProp}
	& \frac{1}{m}~ \ol{X}_m(1) \to  \E[X_0] \quad \text{ and } \quad \frac{1}{n}~\ol{Y}_n(1) \to  \E[Y_0]
\end{align}
as well as an invariance principle in the Skorokhod topology on $D_1 = D([0,1])$, namely, 
\begin{align*}
	H_{m} := \frac{1}{\sqrt{m}} (\ol{X}_m(\cdot) - \floor{ m \cdot} \E[X_0]) &\Rightarrow \sqrt{\Gamma_X} B_1,
	\quad  \Gamma_X = \sum_{h\in\Z} \Cov(X_0,X_h);\\
	J_{n} := \frac{1}{\sqrt{n}} (\ol{Y}_n(\cdot) - \floor{ n \cdot} \E[Y_0]) &\Rightarrow \sqrt{\Gamma_Y} B_2,
	\quad  \Gamma_Y = \sum_{h\in\Z} \Cov(Y_0,Y_h),
\end{align*}
where $B_1, B_2$ are independent standard Brownian motions. Let $D = \E[X_0] - \E[Y_0]$ and define
\begin{align*}
	\wt{D}_{m,n}(s) &= s \left( \frac{1}{\floor{ms} } \ol{X}_m(s) - \frac{1}{\floor{ns} } \ol{Y}_n(s)  \right), \quad s\in [0,1],\\
	\wt{V}_{m,n} &= \left\{ \int_0^1 \big(\wt{D}_{m,n}^2(s) - s^2\wt{D}_{m,n}^2(1) \big)^2 \, \nu(\diff s) \right\}^{1/2},
\end{align*}
where we use the convention $\ol X_m(s)/\floor{ms} = 0$ for $s<1/m$.
Suppose there is a constant $\tau \in (0,1)$ such that $\lim_{m,n \to \infty} m/(m+n) = \tau$. Then (w.r.t.\ the Euclidean metric on $\R^2$)
\begin{align*}
	\sqrt{m+n}\Big( \wt{D}_{m,n}^2(1) - D^2,~\wt{V}_{m,n}\Big) \Rightarrow 
	\left( \xi B(1),~ \xi \left\{ \int_0^1 s^2 \big(B(s) - s B(1)\big)^2 \, \nu(\diff s) \right\}^{1/2} \right),
\end{align*}
where $B$ is a standard Brownian motion and $\xi = 2\sqrt{\frac{\Gamma_X}{\tau} + \frac{\Gamma_Y}{1-\tau} }(\E[X_0]-\E[Y_0])$.
\end{theorem}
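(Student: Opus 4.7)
[Proof proposal for Theorem~\ref{Thrm:WeakConvergenceTwoSampleRelevantDifference_General}]
The plan is to linearize $\wt{D}_{m,n}(s)-sD$, apply an $(H_m,J_n)$-FCLT in $D_1\times D_1$, and then read off the limit of $\sqrt{m+n}(\wt{D}_{m,n}^2(s)-s^2D^2)$ as a product via Slutsky; the limit of $\wt V_{m,n}$ then follows by continuous mapping.

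First, I rewrite
\begin{align*}
\frac{s}{\lfloor ms\rfloor}\,\ol X_m(s)-s\E[X_0]
   = \frac{ms}{\lfloor ms\rfloor}\cdot\frac{H_m(s)}{\sqrt{m}}
     +\Big(\frac{s\lfloor ms\rfloor}{\lfloor ms\rfloor}-s\Big)\E[X_0],
\end{align*}
and analogously for the $Y$-sample. The second term vanishes deterministically, and $ms/\lfloor ms\rfloor\to 1$ uniformly on $[\eta,1]$ for each fixed $\eta>0$. Hence on $[\eta,1]$,
\begin{align*}
\sqrt{m+n}\,(\wt{D}_{m,n}(s)-sD)
   = \sqrt{\tfrac{m+n}{m}}\,H_m(s)-\sqrt{\tfrac{m+n}{n}}\,J_n(s)+o_\p(1).
\end{align*}
The contribution from $s\in[0,\eta]$ is uniformly $O_\p(\sqrt{m+n}\,\eta)$ after using the convention $\ol X_m(s)/\lfloor ms\rfloor=0$ on $s<1/m$, combined with the tightness of $H_m,J_n$ near $0$; letting $\eta\downarrow 0$ after the weak limit is formed handles the boundary.

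Next, independence of the $X$- and $Y$-samples and the two invariance principles give the joint convergence $(H_m,J_n)\Rightarrow(\sqrt{\Gamma_X}B_1,\sqrt{\Gamma_Y}B_2)$ in $D_1\times D_1$ with $B_1,B_2$ independent. With $m/(m+n)\to\tau$, the continuous mapping theorem yields
\begin{align*}
\sqrt{m+n}\,(\wt D_{m,n}(\cdot)-(\cdot)D)\;\Rightarrow\;\cG(\cdot):=\tfrac{\sqrt{\Gamma_X}}{\sqrt{\tau}}B_1(\cdot)-\tfrac{\sqrt{\Gamma_Y}}{\sqrt{1-\tau}}B_2(\cdot),
\end{align*}
in $D_1$, and $\cG=\sqrt{\Gamma_X/\tau+\Gamma_Y/(1-\tau)}\,B$ in distribution for a standard Brownian motion $B$. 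Writing $\wt D_{m,n}^2(s)-s^2D^2=(\wt D_{m,n}(s)-sD)(\wt D_{m,n}(s)+sD)$ and observing that the SLLN assumption \eqref{E:MaxConvProp} together with a maximal inequality implies $\sup_{s\in[0,1]}|\wt D_{m,n}(s)+sD-2sD|\to 0$ in probability, Slutsky gives
\begin{align*}
\sqrt{m+n}\,(\wt D_{m,n}^2(s)-s^2D^2)\;\Rightarrow\;2sD\,\cG(s)=\xi\,s\,B(s)
\end{align*}
jointly in $s\in[0,1]$ in $D_1$, with $\xi=2D\sqrt{\Gamma_X/\tau+\Gamma_Y/(1-\tau)}$.

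Finally, the key identity
\begin{align*}
\sqrt{m+n}\bigl(\wt D_{m,n}^2(s)-s^2\wt D_{m,n}^2(1)\bigr)
= \sqrt{m+n}\bigl(\wt D_{m,n}^2(s)-s^2D^2\bigr)
-s^2\,\sqrt{m+n}\bigl(\wt D_{m,n}^2(1)-D^2\bigr)
\end{align*}
converges jointly (in $s$) in $D_1$ to $\xi\,s(B(s)-sB(1))$. Since the limit process is continuous, Skorokhod convergence upgrades to uniform convergence, so the functional $f\mapsto(f(1),\{\int_0^1 f(s)^2\,\nu(\diff s)\}^{1/2})$ is continuous at the limit; applying the continuous mapping theorem to the pair gives
\begin{align*}
\sqrt{m+n}\bigl(\wt D_{m,n}^2(1)-D^2,\ \wt V_{m,n}\bigr)\;\Rightarrow\;\Bigl(\xi B(1),\ |\xi|\Big\{\!\int_0^1 s^2(B(s)-sB(1))^2\,\nu(\diff s)\Big\}^{1/2}\Bigr),
\end{align*}
which is the claim (after absorbing the sign of $\xi$ into $B$ by symmetry of Brownian motion). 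The main obstacles are the uniform-in-$s$ linearization near $s=0$, where the random denominator $\lfloor ms\rfloor$ is small and the convention kicks in, and verifying that the joint convergence of the partial sum processes pushes cleanly through the quadratic functional; both are handled by splitting $[0,1]=[0,\eta]\cup[\eta,1]$ and combining the invariance principle on $[\eta,1]$ with tightness of $H_m,J_n$ at the origin.
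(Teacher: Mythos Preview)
Your approach is the same as the paper's --- factor $\wt D_{m,n}^2(s)-s^2D^2=(\wt D_{m,n}(s)-sD)(\wt D_{m,n}(s)+sD)$, push the FCLT through the first factor, use consistency on the second, then apply continuous mapping. The paper simply makes the Skorokhod-metric bookkeeping and the boundary behaviour near $s=0$ explicit (via Lemma~\ref{L:MaxConvProp} and a $s^{1-\gamma}\cdot s^{\gamma}$ splitting). That said, two of your steps are not right as written.

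First, the boundary bound ``$O_\p(\sqrt{m+n}\,\eta)$'' diverges; it cannot be sent to zero after $m,n\to\infty$. What you actually have on $[0,\eta]$ is
\[
\sqrt{m+n}\,\bigl(\wt D_{m,n}(s)-sD\bigr)
=\sqrt{\tfrac{m+n}{m}}\,\tfrac{ms}{\lfloor ms\rfloor}\,H_m(s)-\sqrt{\tfrac{m+n}{n}}\,\tfrac{ns}{\lfloor ns\rfloor}\,J_n(s),
\]
with $\tfrac{ms}{\lfloor ms\rfloor}\le 2$. So the sup over $s\le\eta$ is controlled by $\sup_{s\le\eta}|H_m(s)|+\sup_{s\le\eta}|J_n(s)|$, which is small uniformly in $m,n$ by tightness of the FCLT limits at $0$ --- \emph{not} by $\sqrt{m+n}\,\eta$. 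The paper handles this regime via Lemma~\ref{L:MaxConvProp}, which is also what is needed for your claim $\sup_s|\wt D_{m,n}(s)-sD|\to 0$ (the SLLN at $s=1$ alone does not give this, since for $s=k/m$ with $k$ fixed the partial average $k^{-1}\ol X_m(s)$ does not converge; the extra factor $s$ is essential and must be exploited carefully).

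Second, the final continuous-mapping step is mis-specified. If $f(s)=\sqrt{m+n}\bigl(\wt D_{m,n}^2(s)-s^2\wt D_{m,n}^2(1)\bigr)$, then $f(1)=0$, not $\sqrt{m+n}(\wt D_{m,n}^2(1)-D^2)$, so the functional $f\mapsto\bigl(f(1),\{\int f^2\,\nu\}^{1/2}\bigr)$ returns $0$ in the first coordinate. The fix is to apply the functional
\[
g\longmapsto\Bigl(g(1),\ \Bigl\{\int_0^1\bigl(g(s)-s^2g(1)\bigr)^2\,\nu(\diff s)\Bigr\}^{1/2}\Bigr)
\]
to the process $g(s)=\sqrt{m+n}\bigl(\wt D_{m,n}^2(s)-s^2D^2\bigr)$, which you have already shown converges to $\xi\, sB(s)$; this functional is continuous at continuous $g$ and yields exactly the claimed limit. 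The paper does the equivalent thing by working with the pair $(F_1,G\circ F_2)$ applied to $(H_m,J_n)$ directly.
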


\begin{proof}[Proof of Theorem~\ref{Thrm:WeakConvergenceTwoSampleRelevantDifference_General}]
As in the formulation of the theorem let $B$, $B_1$, $B_2$ denote independent standard Brownian motions. The proof is divided into several main steps.

{\it Step 1.} We start with rewriting $\sqrt{m+n}( \wt{D}_{m,n}^2(1) - D^2,\wt{V}_{m,n})$ in a more suitable way. Writing out the first term yields
\begin{align}
	\sqrt{m+n}(\wt{D}_{m,n}^2(1) - D^2) 
	&= \sqrt{m+n}(\wt{D}_{m,n}(1) - D)(\wt{D}_{m,n}(1) + D) \nonumber \\
	&= \left( \sqrt{\frac{m+n}{m}} H_m(1) - \sqrt{\frac{m+n}{n}} J_n(1) \right) \nonumber \\
	&\quad \times \left( \frac{1}{m} \ol{X}_m(1) - \frac{1}{n} \ol{Y}_n(1) + \E[X_0] - \E[Y_0]  \right) =: Z_{m,n}. \label{E:WeakConvergenceTwoSample-1}
\end{align}
The second term satisfies
\begin{align}
	\sqrt{m+n}~\wt{V}_{m,n}
	= \left\{ \int_0^1 (m+n)\big(\wt{D}_{m,n}^2(s) - s^2\wt{D}_{m,n}^2(1) \big)^2 \, \nu(\diff s) \right\}^{1/2} \label{E:WeakConvergenceTwoSample-1b}
\end{align}
and its integrand equals
\begin{align}
	&(m+n)\big(\wt{D}_{m,n}^2(s) - s^2\wt{D}_{m,n}^2(1) \big)^2 \nonumber\\ 
	&= (m+n)\big( \wt{D}_{m,n}^2(s) - s^2D^2 - s^2( \wt{D}_{m,n}^2(1) - D^2 ) \big)^2 \nonumber\\
	&= \Big( \sqrt{m+n} \ (\wt{D}_{m,n}(s) - s D)(\wt{D}_{m,n}(s) + s D ) 
	- s^2\sqrt{m+n} \ ( \wt{D}_{m,n}^2(1) - D^2 )\Big)^2 \nonumber\\
	&= \big(\wt Z_{m,n}(s) - s^2 Z_{m,n}\big)^2.\label{E:WeakConvergenceTwoSample-5}
\end{align}
Here we defined the first term in the last curly parentheses by
\begin{align}
	&\wt Z_{m,n}(s) 
	:= \sqrt{m+n} \ (\wt{D}_{m,n}(s) - s D)(\wt{D}_{m,n}(s) + s D) \nonumber \\
	&=  s^{1-\gamma} \ \left\{ \frac{ ms/\lf ms \rf }{\sqrt{m/(m+n)}} \frac{\ol{X}_m(s) - \lf ms \rf \E[X_0]}{\sqrt{m}}
	- \frac{ ns / \lf ns \rf }{\sqrt{n/(m+n)}} \frac{\ol{Y}_n(s) - \lf ns \rf \E[Y_0]}{\sqrt{n}} \right\} \nonumber \\
	&\quad \times s^{\gamma} \ \Biggl\{ \frac{1}{ \lf ms \rf} \sum_{i=1}^{\lf ms \rf} X_i + \E[X_0] 
	- \frac{1}{ \lf ns \rf} \sum_{i=1}^{\lf ns \rf} Y_i -  \E[Y_0] \Biggl\}  \nonumber \\
	&=: \wt Z_{1,m,n}(s) \times  \wt Z_{2,m,n}(s). \label{E:WeakConvergenceTwoSample-6}
\end{align}

{\it Step 2.} Next, in view of \eqref{E:WeakConvergenceTwoSample-1}, let us introduce the functional
\begin{align*}
	F_1\colon D_1 \times D_1 \to \R,\quad (u,v) \mapsto 2(\E[X_0] - \E[Y_0])\Big(\frac{ u(1)}{\sqrt{\tau}} - \frac{v(1)}{\sqrt{1-\tau}} \Big). 
\end{align*}
Then, as an immediate consequence of \eqref{E:MaxConvProp}, we see that 
\begin{align}
	&Z_{m,n} - F_1(H_m,J_n) \nonumber \\ 
	&= \left\{\sqrt{\frac{m+n}{m}} \left( \frac{1}{m} \ol X_m(1) - \frac{1}{n} \ol Y_n(1) + \E[X_0] - \E[Y_0] \right) 
		- \frac{2(\E[X_0] - \E[Y_0])}{\sqrt{\tau}} \right\} H_m(1) \nonumber\\
	&\quad+  \left\{\sqrt{\frac{m+n}{n}} \left( \frac{1}{m} \ol X_m(1) - \frac{1}{n} \ol Y_n(1) + \E[X_0] - \E[Y_0] \right) 
	- \frac{2(\E[X_0] - \E[Y_0])}{\sqrt{1-\tau}} \right\} J_n(1) \nonumber \\
	&\to	0 \label{E:WeakConvergenceTwoSample15}
\end{align}
in probability. Since $F_1$ is continuous with respect to (any) 
product metric on $D_1\times D_1$, we also have by the invariance principle 
\begin{align*}
	F_1(H_m,J_m)~\Rightarrow~ &F_1(\sqrt{\Gamma_X} B_1, \sqrt{\Gamma_Y} B_2)\\
	&= 2(\E[X_0] - \E[Y_0])\Big(\frac{ \sqrt{\Gamma_X}B_1(1)}{\sqrt{\tau}} - \frac{\sqrt{\Gamma_Y}B_2(1)}{\sqrt{1-\tau}} \Big)\\
	&\stackrel{\cL}{=} 2(\E[X_0] - \E[Y_0]) \sqrt{ \frac{\Gamma_X}{\tau} + \frac{\Gamma_Y}{1-\tau} } B(1).
\end{align*}
Consequently, an application of Slutsky's theorem yields $Z_{m,n} \Rightarrow \xi B(1)$.\\

{\it Step 3.} In order to derive the weak limit of \eqref{E:WeakConvergenceTwoSample-1b} we start by considering the integrand first. In view of 
\eqref{E:WeakConvergenceTwoSample-5} and \eqref{E:WeakConvergenceTwoSample-6}, let us introduce the operator
\begin{align*}
	&G_{1,1}\colon D_1\times D_1 \to D_1, \quad (u,v) \mapsto \Big( s^{1-\gamma}\Big( \frac{u(s)}{\sqrt{\tau}} - \frac{v(s)}{\sqrt{1-\tau}} \Big) ~\Big|\, s\in[0,1] \Big),
\end{align*}
and the function
\begin{align*}
	&g_{1,2}\colon [0,1] \to\R, \quad s \mapsto   2s^{\gamma}(\E[X_0] -\E[Y_0])
\end{align*}
for some $\gamma \in (0,1)$ arbitrary but fixed. Then we define the operator $G_1 := G_{1,1} \cdot g_{1,2}$ and finally set
\begin{align*}
	F_2 \colon D_1\times D_1\to D_1, \quad (u,v)\mapsto  (G_1(u,v)-(\cdot)^2 F_1(u,v))^2.
\end{align*}
Observe that $F_2$ is discontinuous only on $\mathrm{Dis}(F_2) = D_1\backslash\cC \times D_1\backslash \cC$, where $\cC$ denotes the set of continuous functions on the interval $[0,1]$. Since by assumption
\begin{align*}
	(H_m, J_n) ~\Rightarrow~(\sqrt{\Gamma_X} B_1, \sqrt{\Gamma_Y} B_2)
\end{align*}
and $\p((\sqrt{\Gamma_X} B_1, \sqrt{\Gamma_Y} B_2) \in \mathrm{Dis}(F_2)) = 0$, we can therefore apply the continuous mapping theorem to obtain 
\begin{align}\label{E:WeakConvergenceTwoSample-8}
	F_2(H_m,J_n) ~\Rightarrow~
	&F_2(\sqrt{\Gamma_X} B_1, \sqrt{\Gamma_Y} B_2).
\end{align}
Next, we show that $F_2(H_m,J_n)$ approximates the asymptotic law of the integrand \eqref{E:WeakConvergenceTwoSample-5} in the sense that
\begin{align}\label{E:WeakConvergenceTwoSample-3}
	d_1\big((\wt Z_{m,n} - (\cdot)^2 Z_{m,n})^2,~ F_2(H_m,J_n)  \big) \to 0
\end{align}
in probability, where $d_1$ denotes the Skorokhod metric on $D_1$, cf. \eqref{SkorokhodMetric} for an exact definition. The Skorokhod metric $d_1$ satisfies 
\begin{align*}
	d_1(u,v) < \delta \text{ implies } d_1(u^2,v^2) < (1+\|u\|_{\infty} + \|v\|_{\infty}) \delta
\end{align*}
for any $u,v\in D_1$ and $\delta > 0$. So, in order to verify \eqref{E:WeakConvergenceTwoSample-3}, it is sufficient that
\begin{align}\label{E:WeakConvergenceTwoSample-4}
	d_1\big(\wt Z_{m,n} - (\cdot)^2 Z_{m,n},~ G_1(H_m,J_n)-(\cdot)^2 F_1(H_m,J_n)  \big) \to 0 \text{ in probability.}
\end{align}
To that end, we first use the triangle inequality to bound the left-hand side in \eqref{E:WeakConvergenceTwoSample-4} by 
\begin{align}\label{E:WeakConvergenceTwoSample-7}
	\begin{split}
	&d_1(\wt Z_{m,n} - (\cdot)^2 Z_{m,n},~ G_1(H_m,J_n) - (\cdot)^2 Z_{m,n} )\\
	&+ d_1(G_1(H_m,J_n) - (\cdot)^2 Z_{m,n},~ G_1(H_m,J_n)-(\cdot)^2 F_1(H_m,J_n)  ).
	\end{split}
\end{align}
We begin with the first term in \eqref{E:WeakConvergenceTwoSample-7}. We have by the definition of the Skorokhod metric
\begin{align*}
	&d_1(\wt Z_{m,n} - (\cdot)^2 Z_{m,n},~ G_1(H_m,J_n) - (\cdot)^2 Z_{m,n} )\\
	&= \inf_{\lambda \in \Lambda} \Big\{ \max\Big\{ \sup_s |\wt Z_{m,n}(s) - s^2 Z_{m,n} - G_1(H_m,J_n)(\lambda(s))+\lambda(s)^2 Z_{m,n} |,\\ 
	&\qquad	\sup_s |\lambda(s) -s|\Big\} \Big\}\\
	&\leq \inf_{\lambda \in \Lambda} \Big\{ 2(2|Z_{m,n}| + 1) \max\Big\{ \sup_s | \wt Z_{m,n}(s) - G_1(H_m,J_n)(\lambda(s)) |,~  \sup_s |\lambda(s) -s|\Big\} \Big\}\\
	&=2(2|Z_{m,n}| + 1)\, d_1( \wt Z_{m,n},~ G_1(H_m,J_n) ).
\end{align*}
In the second step we have already established $Z_{m,n} \Rightarrow \xi B(1)$. 	Furthermore, it follows that $d_1( \wt Z_{m,n}, G_1(H_m,J_n) ) \rightarrow 0$ in probability. Indeed, we have
	\begin{align*}
		d_1\big( \wt Z_{m,n}, \ G_1(H_m,J_n) \big) &= d_1( \wt Z_{1,m,n} \ \wt Z_{2,m,n}, \ G_{1,1}(H_m,J_n) \ g_{1,2} ) \\
		&\le \sup_{s\in[0,1]} \Big| \wt Z_{1,m,n}(s) -  G_{1,1}(H_m,J_n)(s) \Big|  \  \sup_{s\in [0,1]} |\wt Z_{2,m,n} (s)| \\
		&\quad +  \sup_{s\in[0,1]}  |  G_{1,1}(H_m,J_n)(s) |  \  \sup_{s\in[0,1]}  \Big| \wt Z_{2,m,n}(s) - g_{1,2}(s) \Big|.
	\end{align*}
On the one hand, 
	\begin{align*}
	\left( \sup_{s\in [0,1]} |\wt Z_{2,m,n} (s)| \right)_{m,n}  \text{ and } \left( \sup_{s\in[0,1]}  |  G_{1,1}(H_m,J_n)(s) |  \right)_{m,n}
	\end{align*}
	are tight real-valued sequences. On the other hand, it is true that in probability
	\begin{align}
		&\sup_{s\in [0,1]} |\wt Z_{1,m,n}(s) -  G_{1,1}(H_m,J_n)(s)| \to 0, \label{E:WeakConvergenceTwoSample12} \\
		&\sup_{s \in[0,1] } | \wt Z_{2,m,n}(s) - g_{1,2}(s)|  \to 0. \label{E:WeakConvergenceTwoSample13}
	\end{align}
	Indeed, \eqref{E:WeakConvergenceTwoSample12} is bounded above as follows
	\begin{align*}
		&\sup_{s\in [0,1]} |\wt Z_{1,m,n}(s) -  G_{1,1}(H_m,J_n)(s)| \\
		&\lesssim \sup_{s \in[0,1] } s^{1-\gamma} \  \Big | \frac{ s m / \floor{ s m} }{ \sqrt{ m/(m+n)}} - \frac{1}{\sqrt{\tau}} \Big| \ | H_m(s)| \\
		&\quad + \sup_{s \in[0,1] } s^{1-\gamma} \ \Big | \frac{ s n / \floor{ s n} }{ \sqrt{ n/(m+n)}} - \frac{1}{\sqrt{1-\tau}} \Big| \ | J_n(s)| \rightarrow 0
	\end{align*}
	in probability. And similarly for \eqref{E:WeakConvergenceTwoSample13}
	\begin{align*}
		\sup_{s \in[0,1] } | \wt Z_{2,m,n}(s) - g_{1,2}(s)|
		&\lesssim \sup_{s \in[0,1] } s^\gamma \  \frac{\sqrt{m}}{\floor{ms}} | H_m(s) | 
		+ \sup_{s \in[0,1] } s^\gamma \  \frac{\sqrt{n}}{\floor{ns}} | J_n(s) | \rightarrow 0
	\end{align*}
	in probability by Lemma~\ref{L:MaxConvProp} -- using the assumed strong law of large numbers for $m^{-1} \ol X_m(1)$ and $n^{-1} \ol Y_n(1)$.
Hence by Slutsky's theorem the first term in \eqref{E:WeakConvergenceTwoSample-7} converges to zero in probability.

The second term in \eqref{E:WeakConvergenceTwoSample-7} can be bounded above by choosing the identity in the definition of the Skorokhod metric, i.e.,
\begin{align*}
	&d_1(G_1(H_m,J_n) - (\cdot)^2 Z_{m,n},~ G_1(H_m,J_n)-(\cdot)^2 F_1(H_m,J_n)  )\\
	&\leq \sup_s |s^2 Z_{m,n} - s^2 F_1(H_m,J_n)| = |Z_{m,n} - F_1(H_m,J_n)|.
\end{align*}
Then once more, relying on the results of step two, the right-hand side tends to zero in probability. Thus, $\eqref{E:WeakConvergenceTwoSample-3}$ is established 
and together with \eqref{E:WeakConvergenceTwoSample-8} we conclude by Slutsky's theorem 
\begin{align*}
	\wt Z_{m,n} - (\cdot)^2 Z_{m,n} \Rightarrow F_2(\sqrt{\Gamma_X} B_1, \sqrt{\Gamma_Y} B_2).
\end{align*}
Also, it is straightforward to see 
\begin{align*}
	&F_2(\sqrt{\Gamma_X} B_1, \sqrt{\Gamma_Y} B_2)(s)\\
	&\stackrel{\cL}{=} \left( 2s (\E[X_0] - \E[Y_0]) \sqrt{\frac{\Gamma_X}{\tau} + \frac{\Gamma_Y}{1-\tau}} \big( B(s) - sB(1) \big) \right)^2 = \xi^2 s^2 \big( B(s) - sB(1) \big)^2.
\end{align*}
Finally, since integration is a continuous functional on $D_1$ (w.r.t.\ $d_1$), we can deduce on the one hand
\begin{align*}
	\left\{ \int_0^1 F_2(H_m,J_n)(s)^2 \nu(\diff s) \right\}^{1/2} \Rightarrow \xi \left\{ \int_0^1 s^2 \big(B(s) - s B(1)\big)^2 \, \nu(\diff s) \right\}^{1/2}.
\end{align*}
And on the other hand,
\begin{align}
	\sqrt{m+n}\wt V_{m,n} - \left\{ \int_0^1 F_2(H_m,J_n)(s)^2 \nu(\diff s) \right\}^{1/2} \to 0 \label{E:WeakConvergenceTwoSample14}
\end{align}
in probability.

{\it Step 4.} It is just a formality to remark that the weak convergence which we established in steps two and tree carries over to the two dimensional setting: Indeed,
\begin{align*}
	&\Bigg( F_1(H_m, J_n), \left\{ \int_0^1 F_2(H_m,J_n)(s)^2 \nu(\diff s) \right\}^{1/2} \Bigg) \\
	&\quad \Rightarrow (F_1(\sqrt{\Gamma_X} B_1, \sqrt{\Gamma_Y} B_2), \left\{ \int_0^1 F_2(\sqrt{\Gamma_X} B_1, \sqrt{\Gamma_Y} B_2)(s)^2 \nu(\diff s) \right\}^{1/2} \\
	&\qquad \overset{\cL}{=} \Bigg( \xi B(1),\xi \left\{ \int_0^1 s^2 \big(B(s) - s B(1)\big)^2 \, \nu(\diff s) \right\}^{1/2} \Bigg)
\end{align*}
and the convergence results in \eqref{E:WeakConvergenceTwoSample15}  and \eqref{E:WeakConvergenceTwoSample14} remain true as well when considered jointly in a vector. This completes the proof.
\end{proof}

\begin{lemma}\label{L:MaxConvProp}
Let $(Z_n)_n$ be a sequence of identically distributed, integrable random variables and assume that the corresponding partial sum $\ol{Z}_k = \sum_{i=1}^k Z_i$ satisfies 
a SLLN, i.e., $k^{-1}\ol{Z}_k \to \E[Z_0]$ almost surely.
Then for all $\delta$, $\epsilon>0$ and $\gamma \in (0,1)$ there is a constant $M\in \N$ such that
\begin{align}\label{E:MaxConvProp2}
	\p\left( \max_{k\le m } \Big( \frac{k+1}{m}\Big)^\gamma \big| k^{-1} \ol Z_k - \E[Z_0]\big| > \epsilon \right) < \delta \qquad \forall\, m\ge  M.
\end{align}
\end{lemma}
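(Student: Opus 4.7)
The plan is to reduce to the centered case and split the maximum at a threshold $k_0$, handling the small-$k$ and large-$k$ parts by separate, independent mechanisms.

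First, by replacing $Z_i$ with $Z_i - \E[Z_0]$, I may assume $\E[Z_0] = 0$, and set $R_k := k^{-1}\overline{Z}_k$, so that $R_k \to 0$ almost surely by the hypothesized SLLN. The goal becomes to bound $\p(M_m > \epsilon)$ for
\[
M_m := \max_{k \le m}\Big(\frac{k+1}{m}\Big)^\gamma |R_k|.
\]

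I would split $M_m \le A_m(k_0) + B_m(k_0)$, where
\[
A_m(k_0) := \max_{k \le k_0}\Big(\frac{k+1}{m}\Big)^\gamma |R_k|, \qquad B_m(k_0) := \max_{k_0 < k \le m}\Big(\frac{k+1}{m}\Big)^\gamma |R_k|,
\]
and then choose $k_0$ first and $M$ afterwards. For the tail term, since $k \le m$ implies $(k+1)/m \le 2$, one obtains
\[
B_m(k_0) \le 2^\gamma \sup_{k > k_0} |R_k|.
\]
Because $R_k \to 0$ almost surely, the supremum $\sup_{k > k_0} |R_k|$ decreases to $0$ almost surely as $k_0 \to \infty$. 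Therefore I can pick $k_0$ large enough so that $\p\bigl(2^\gamma \sup_{k > k_0}|R_k| > \epsilon/2\bigr) < \delta/2$, which handles $B_m(k_0)$ uniformly in $m$.

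For the head term, with $k_0$ now fixed, I use the crude bound
\[
A_m(k_0) \le \Big(\frac{k_0+1}{m}\Big)^\gamma \max_{k \le k_0}|R_k|.
\]
The factor $\max_{k \le k_0}|R_k|$ is an almost surely finite random variable (as a finite maximum of integrable random variables), while $((k_0+1)/m)^\gamma \to 0$ as $m \to \infty$. Hence $A_m(k_0) \to 0$ almost surely, and in particular there is $M \in \N$ such that $\p(A_m(k_0) > \epsilon/2) < \delta/2$ for all $m \ge M$. Combining the two estimates yields $\p(M_m > \epsilon) < \delta$ for all $m \ge M$, which is the claim.

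I do not anticipate a significant obstacle: the argument is a routine tail-splitting exercise, the only subtlety being the trivial bookkeeping constant $2^\gamma$ coming from $k=m$ giving $(k+1)/m = 1+1/m$, which is absorbed into the choice of $k_0$.
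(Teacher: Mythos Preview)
Your proof is correct and in fact more elementary than the paper's. Both arguments split the maximum into a small-$k$ head and a large-$k$ tail, but the paper cuts at a \emph{moving} threshold $\lfloor m^\alpha\rfloor$ with a carefully tuned $\alpha<\gamma/(1+\gamma)$: for the head it applies Markov's inequality together with the crude bound $\max_{k\le \lfloor m^\alpha\rfloor}|R_k|\le m^\alpha\,\E|Z_1-\E Z_0|$, which is where integrability is actually used, and for the tail it introduces a random index $M_0(\omega)$ and the event $\{M_0\le M_1\}$. Your fixed cut at $k_0$ avoids all of this: the head is handled by the deterministic factor $((k_0+1)/m)^\gamma\to 0$ against an a.s.\ finite random variable, and the tail by the bare fact that $\sup_{k>k_0}|R_k|\to 0$ a.s. One minor cosmetic point: since $M_m=\max(A_m(k_0),B_m(k_0))$ rather than their sum, you could use $\{M_m>\epsilon\}\subset\{A_m>\epsilon\}\cup\{B_m>\epsilon\}$ directly and dispense with the $\epsilon/2$; but your version is of course fine. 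As a bonus, your argument shows that the integrability hypothesis is not really needed for the lemma---a.s.\ finiteness of each $Z_i$ together with the assumed SLLN already suffices.
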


\begin{proof}[Proof of Lemma~\ref{L:MaxConvProp}]
By assumption there is a measurable set $\Omega_0 \subseteq \Omega$ with $\p(\Omega_0) = 1$ and 
\begin{align*}
	\big|k^{-1} \ol Z_k(\omega) - \E[Z_0]\big| \to 0 \qquad \forall\, \omega \in \Omega_0.
\end{align*}
Hence, for each $\omega \in \Omega_0$, there exists a $M_0(\omega) \in \N$ such that 
\begin{align}\label{E:MaxConvProp6}
	\big|k^{-1} \ol Z_k(\omega) - \E[Z_0]\big| < \frac{\epsilon}{2^{\gamma}} \qquad \forall\, k \geq M_0(\omega).
\end{align}	
For $\omega \in \Omega\backslash \Omega_0$ set $M_0(\omega) = 0$. Then  $M_0 \colon \Omega \to \N\cup \{\infty\}$ is almost surely bounded and therefore
we can find a constant $M_1\in\N$ such that $\p(M_0 > M_1) \leq \delta/2$. Next, observe that for $\alpha\in(0,1)$ the probability in \eqref{E:MaxConvProp2} can be bounded 
as follows:
\begin{align}
	&\p\left( \max_{k\le m } \Big( \frac{k+1}{m}\Big)^\gamma \big| k^{-1} \ol Z_k - \E[Z_0]\big| > \epsilon \right) \nonumber\\
	&\leq \p\left( \max_{k\le m } \Big( \frac{k+1}{m}\Big)^\gamma \big| k^{-1} \ol Z_k - \E[Z_0]\big| > \epsilon,~ M_0 \leq M_1 \right) + \p(M_0 > M_1) \nonumber\\
	&\leq \p\left( \max_{k\le \floor{m^\alpha} } \Big( \frac{k+1}{m}\Big)^\gamma \big| k^{-1} \ol Z_k - \E[Z_0]\big| > \epsilon,~ M_0\leq M_1\right) \nonumber\\
	&\quad+ \p\left(\max_{\floor{m^\alpha} < k\leq m} \Big( \frac{k+1}{m}\Big)^\gamma \big| k^{-1} \ol Z_k - \E[Z_0]\big| > \epsilon,~ M_0 \leq M_1 \right)
	 + \frac{\delta}{2} \nonumber\\
	&\leq  \p\left( \Big( \frac{\floor{m^\alpha}+1}{m}\Big)^\gamma \max_{k\le \floor{m^\alpha} } \big| k^{-1} \ol Z_k - \E[Z_0]\big| > \epsilon\right) \nonumber\\
	&\quad+ \p\left(\max_{\floor{m^\alpha} < k\leq m} \big| k^{-1} \ol Z_k - \E[Z_0]\big| > \frac{\epsilon}{2^{\gamma}},~ M_0 \leq M_1 \right) 
	+ \frac{\delta}{2} \label{E:MaxConvProp4}
\end{align} 
Furthermore, applying Markov's inequality to the first term in \eqref{E:MaxConvProp4} shows 
\begin{align}
	&\p\left( \Big( \frac{\floor{m^\alpha}+1}{m}\Big)^\gamma \max_{k\le \floor{m^\alpha} } \big| k^{-1} \ol Z_k - \E[Z_0]\big| > \epsilon\right) \nonumber\\
	&\leq \frac{1}{\epsilon} \Big( \frac{\floor{m^\alpha}+1}{m}\Big)^\gamma\, \E\left[ \max_{k\le \floor{m^\alpha} } \big| k^{-1} \ol Z_k - \E[Z_0]\big|  \right] \nonumber\\
	&\leq \frac{1}{\epsilon} \Big( m^{\alpha-1}+\frac{1}{m}\Big)^\gamma m^\alpha \, \E \big[ |Z_1 - \E[Z_0]| \big] \nonumber\\
	&\leq \frac{1}{\varepsilon} \left( m^{\alpha(\gamma+1)-\gamma} + m^{\alpha-\gamma} \right) \E \big[ |Z_1 - \E[Z_0]| \big]. \label{E:MaxConvProp5}
\end{align}
Hence, if we let $0<\alpha<\gamma/(1+\gamma)$ both exponents of $m$ become negative and we can find $M_2 \in \N$ such that \eqref{E:MaxConvProp5} is less than 
$\delta/2$ for all $m\geq M_2$. Finally, we put $M := M_1^{1/\alpha} \vee M_2$. Then $m \geq M$ implies $m^{\alpha} \geq M_1$ and 
the second term in \eqref{E:MaxConvProp4} vanishes therefore for all $m \geq M$ due to \eqref{E:MaxConvProp6}, which completes the proof.
\end{proof}

\begin{cor}\label{Cor:SLLNforL^2-m-processes}
Any stationary, $L^2$-$m$-approximable sequence $(Z_n)_n$ satisfies the strong law of large numbers, that is $n^{-1}(Z_1+\ldots+Z_n) \to \E[Z_0]$ almost surely.
\end{cor}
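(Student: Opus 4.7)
The plan is to deduce the strong law of large numbers from Birkhoff's ergodic theorem applied to the Bernoulli shift representation which is built into the definition of $L^2$-$m$-approximability. Since no mixing rate is actually needed for the almost-sure convergence of the average, the summability condition \eqref{E:StandardLpM} will play no quantitative role here; only the existence of the representation and of $\E[Z_0]$ will matter.

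First I would recall that by the $L^2$-$m$-approximability assumption there exist an iid sequence $(\epsilon_t)_{t\in\Z}$ with values in some measurable space $(S,\cS)$ and a measurable map $f\colon S^{\infty}\to\R$ such that $Z_t = f(\epsilon_t,\epsilon_{t-1},\ldots)$ for every $t\in\Z$. On the product space $(S^{\Z}, \cS^{\otimes\Z}, \mu^{\otimes\Z})$, where $\mu$ is the common law of $\epsilon_0$, consider the left shift $T\colon S^{\Z}\to S^{\Z}$, $(T\omega)_k = \omega_{k+1}$. The shift is measure-preserving, and it is a classical fact (Kolmogorov's zero-one law, or direct verification of mixing on cylinder sets) that $T$ is ergodic on the iid product measure.

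Second, writing $Z_0$ as a measurable function on $(S^{\Z},\mu^{\otimes\Z})$ and observing $Z_n = Z_0\circ T^n$, I conclude that $(Z_n)_{n\in\Z}$ is a measurable factor of the ergodic dynamical system $(S^{\Z},\mu^{\otimes\Z},T)$. Stationarity and ergodicity of $(Z_n)$ follow immediately.

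Third, I need integrability $Z_0\in L^1$. In the context in which the corollary is invoked (see the proof of Theorem~\ref{T:FrechetVarianceJointConvergence}), the stronger condition $\E[Z_0^2]<\infty$ has already been verified for the sequences $X_i = W_r^2(\pd(\cX_i),\mu_\cX)$ and $Y_i = W_r^2(\pd(\cY_i),\mu_\cY)$, so integrability is immediate by the Cauchy–Schwarz inequality. With $Z_0\in L^1$ and the ergodicity established above, Birkhoff's pointwise ergodic theorem yields
\begin{equation*}
	\frac{1}{n}\sum_{i=1}^n Z_i \;\longrightarrow\; \E[Z_0] \qquad \text{almost surely,}
\end{equation*}
which is the claim. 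The only (very mild) obstacle is that ergodicity of the Bernoulli shift on $\mu^{\otimes\Z}$ is a standard ingredient that must be cited explicitly; after that, the corollary is an immediate consequence of the fact that Bernoulli shift functionals inherit ergodicity from the underlying iid innovations.
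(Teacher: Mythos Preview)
Your proof is correct and takes a genuinely different route from the paper. The paper argues quantitatively: after centering, it uses the independence of $Z_0$ and $Z_j^{(j)}$ together with the $L^2$-summability \eqref{E:StandardLpM} to obtain $\sum_{j\ge 1}|\E[Z_0Z_j]|<\infty$, hence $\V(S_n)\le Cn$, and then runs a standard Borel--Cantelli subsequence argument along $n^2$ (with control of the fluctuations between consecutive squares) to conclude. Your argument is more structural: you invoke only the Bernoulli shift representation built into the definition, note that the iid product measure is ergodic, and apply Birkhoff's theorem to the factor $(Z_n)_n$. This is cleaner and makes transparent that the decay rate in \eqref{E:StandardLpM} is not actually needed for the SLLN itself---only the causal-iid structure and integrability of $Z_0$ matter. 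The paper's approach, on the other hand, yields the quantitative variance bound $\V(S_n)\lesssim n$ as a by-product, which is of independent use (for instance in moment estimates elsewhere in the paper) and is not delivered by your ergodic-theoretic route.
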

\begin{proof}
Without loss of generality we can assume $\E[Z_0] = 0$ by centering. First observe that $Z_0$ and $Z_j^{(j)}$ are independent and therefore
\begin{align*}
	\sum_{j\in\N} | \E[ Z_0 Z_j ]| &\le \sum_{j\in\N} | \E[ Z_0 (Z_j - Z_j^{(j)}) ]| + \sum_{j\in\N} | \E[ Z_0  Z_j^{(j)} ]| \\
	&\le \E[Z_0^2]^{1/2} \sum_j \E[ (Z_0 - Z_0^{(j)})^2]^{1/2} < \infty.
\end{align*}
In particular for any $I\subseteq \N$, $\V( \sum_{j\in I} Z_j ) \le C \# I$ for a constant $C\in\R_+$ which does not depend on $I$.
Consequently, for $S_n = \sum_{i=1}^n Z_i$, we can rely on a classical proof technique for the strong law of large numbers and use the Borel-Cantelli lemma to see that both
\begin{align*}
	\frac{1}{n^2}  S_{n^2} \rightarrow 0 \quad a.s.
\text{ and }	\max_{n^2+1 \le k \le (n+1)^2 } \  \frac{1}{n^2} (S_k- S_{n^2}) \rightarrow 0 \quad a.s.
\end{align*}
In particular, $\max_{n^2+1 \le k \le (n+1)^2 } \frac{1}{k} S_k \to 0$ for $n\to \infty$ with probability $1$. Clearly, this implies the strong law of large numbers.
\end{proof}

%%%%%%%%%%%%%
%%%%%%%%%%%%%
\subsection{Two sample tests for inco-variances}

\begin{lemma}[Cohen-Steiner et al. \cite{cohen2010lipschitz}]\label{L:CohenSteiner} Let $1\le p'\le r < \infty$ and let $\pd(\cX),\pd(\cY)$ be persistence diagrams whose $p'$-total persistence is bounded from above. Then
\begin{align*}
	W_r(D,E) &\le \Big( \frac{ \pers_{p'}(\pd(\cX))	+ \pers_{p'}(\pd(\cY)) }{2} \Big)^{1/r} W_\infty(\pd(\cX),\pd(\cY))^{1- p'/r} \\
	&\le \Big( \frac{ \pers_{p'}(D)	+ \pers_{p'}(E) }{2} \Big)^{1/r} d_H(\cX,\cY)^{1- p'/r}.
\end{align*}
Here the $p'$-total persistence is defined as $\pers_{p'}(D) = \sum_{(d_x,b_x)\in D} (d_x-b_x)^{p'}$ .
\end{lemma}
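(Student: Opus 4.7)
The lemma has two inequalities, and my plan is to prove each separately, invoking classical results from topological data analysis.

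For the first inequality, I would start by taking an optimal matching $\gamma^*$ that realizes $W_\infty(\pd(\cX), \pd(\cY))$, so that $\|u - \gamma^*(u)\|_\infty \le W_\infty(\pd(\cX), \pd(\cY))$ for every $u$. The key identity is the elementary factorization $\|u - \gamma^*(u)\|_\infty^r = \|u - \gamma^*(u)\|_\infty^{r - p'} \cdot \|u - \gamma^*(u)\|_\infty^{p'}$. The first factor is controlled uniformly by $W_\infty(\pd(\cX), \pd(\cY))^{r - p'}$, so that
\begin{align*}
W_r(D, E)^r \le \sum_u \|u - \gamma^*(u)\|_\infty^r \le W_\infty(\pd(\cX), \pd(\cY))^{r-p'} \sum_u \|u - \gamma^*(u)\|_\infty^{p'}.
\end{align*}
The crux is then the estimate
\begin{align*}
\sum_u \|u - \gamma^*(u)\|_\infty^{p'} \le \tfrac{1}{2}\big(\pers_{p'}(D) + \pers_{p'}(E)\big),
\end{align*}
which is the technical heart of Cohen-Steiner et al.'s argument. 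It is proved by comparing $\gamma^*$ with modifications in which certain off-diagonal pairs are unmatched and sent to their diagonal projections; the optimality of $\gamma^*$ forces the $\ell^{p'}$-cost of each pair to be controlled in terms of the persistences of its two endpoints.

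For the second inequality, I would apply bottleneck stability to the \v{C}ech filtration. The distance-to-cloud function $d_\cX(x) = \min_{p \in \cX} \|x - p\|_2$ is $1$-Lipschitz in the Hausdorff distance, i.e.\ $\|d_\cX - d_\cY\|_\infty = d_H(\cX, \cY)$. By the nerve theorem the \v{C}ech persistence diagram of $\cX$ coincides with the persistence of the sublevel sets of $d_\cX$, and the classical stability theorem of Chazal--Cohen-Steiner--Glisse--Guibas--Oudot therefore yields
\begin{align*}
W_\infty(\pd(\cX), \pd(\cY)) \le d_H(\cX, \cY).
\end{align*}
Substituting this bound into the first inequality gives the second, since both sides of the first inequality are nonnegative and the dependence on $W_\infty$ is monotone.

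The principal obstacle is the persistence estimate underlying the first inequality, which requires delicate geometric bookkeeping over the bottleneck-optimal matching; once this is in hand, the transition to the Hausdorff distance is a direct application of bottleneck stability.
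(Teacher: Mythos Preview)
The paper does not supply its own proof of this lemma; it is quoted directly from Cohen-Steiner et al.\ \cite{cohen2010lipschitz}, so there is nothing to compare against beyond that reference. Your outline reproduces the standard argument from that source: the factorisation $\|u-\gamma^*(u)\|_\infty^r = \|u-\gamma^*(u)\|_\infty^{r-p'}\cdot\|u-\gamma^*(u)\|_\infty^{p'}$ along the bottleneck-optimal matching, the uniform bound on the first factor by $W_\infty^{r-p'}$, and then the persistence estimate on the residual $\ell^{p'}$-sum. One remark on the second inequality: you derive $W_\infty(\pd(\cX),\pd(\cY))\le d_H(\cX,\cY)$ via the nerve lemma and sublevel-set stability of the distance function, which is the sharp bound for the \v Cech filtration and is what is needed to match the constant in the lemma as stated; the paper's own Lemma~\ref{L:PDandHD} records the coarser bound $W_\infty\le 2\,d_H$ via Gromov--Hausdorff stability (valid also for Vietoris--Rips), so your route here is in fact the tighter one.
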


\begin{lemma}[Kusano et al. \cite{kusano2017kernel}[Corollary 5]\label{L:WassersteinBound}
Let $\fD$ be a compact triangulable subset of $\R^d$. Let $\cX$ and $\cY$ be finite subsets of $\fD$ and let $r\ge p' >d$. Then we have for the persistence diagrams $\pd(\cX)$ and $\pd(\cY)$ from the \v Cech filtration (for any dimension)
\begin{align*}
	W_r( \pd(\cX), \pd(\cY) ) &\le \Big(	\frac{p'}{p'-d} \ C_{\fD} \ (\diam \fD)^{p'-d} \Big)^{1/r} W_\infty( \pd(\cX), \pd(\cY) )^{1-p'/r} \\
	&\le \Big(	\frac{p'}{p'-d} \ C_{\fD} \ (\diam \fD)^{p'-d} \Big)^{1/r} d_H(\cX,\cY)^{1-p'/r}
\end{align*}
for a constant $C_{\fD}\in\R_+$ which only depends on $\fD$.
\end{lemma}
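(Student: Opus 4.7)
The plan is to derive the Kusano--type bound by combining the Cohen-Steiner interpolation inequality (Lemma~\ref{L:CohenSteiner}) with a \emph{uniform} bound on the $p'$-total persistence of a \v Cech diagram built on a point cloud contained in $\fD$. The crucial observation is that once we know
\begin{align*}
\pers_{p'}(\pd(\cX)) \le \frac{p'}{p'-d}\, C_{\fD}\,(\diam \fD)^{p'-d}
\end{align*}
uniformly in the finite set $\cX \subseteq \fD$, the statement falls out of Lemma~\ref{L:CohenSteiner} combined with the classical stability result $W_\infty(\pd(\cX),\pd(\cY)) \le d_H(\cX,\cY)$ for \v Cech diagrams.

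First I would derive the total-persistence bound. The key geometric input is a counting estimate: there exists a constant $C_\fD$, depending only on $\fD$ (through a packing/volume argument on the ambient $\R^d$ intersected with $\fD$), such that for every $\epsilon>0$ and every finite $\cX \subseteq \fD$,
\begin{align*}
N(\epsilon) := \#\{(b,d)\in \pd(\cX) : d-b > \epsilon\} \le C_{\fD}\,\epsilon^{-d}.
\end{align*}
This is standard: any $k$-feature persisting over an interval of length $>\epsilon$ forces the existence of an essentially disjoint ``witness'' ball of radius of order $\epsilon$ inside $\fD$, and the number of such disjoint balls is controlled by $|\fD|/\epsilon^d$ up to a dimensional constant.

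Second, I would integrate. Using the layer-cake representation of $\pers_{p'}$ and the fact that all intervals have length at most $\diam \fD$,
\begin{align*}
\pers_{p'}(\pd(\cX))
= \int_0^{\diam \fD} p'\,\epsilon^{p'-1}\, N(\epsilon)\,\diff\epsilon
\le p'\,C_\fD \int_0^{\diam \fD} \epsilon^{p'-d-1}\,\diff \epsilon
= \frac{p'}{p'-d}\,C_\fD\,(\diam \fD)^{p'-d},
\end{align*}
where the hypothesis $p'>d$ is used precisely to secure integrability at the origin. The same bound holds for $\pd(\cY)$. Inserting both into Lemma~\ref{L:CohenSteiner} (averaging the two identical upper bounds) yields the first inequality of the lemma. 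The second inequality then follows from the \v Cech stability theorem $W_\infty(\pd(\cX),\pd(\cY)) \le d_H(\cX,\cY)$, since $1 - p'/r \ge 0$ by the assumption $r\ge p'$.

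The main obstacle is the counting estimate $N(\epsilon) \lesssim \epsilon^{-d}$, which is the only place where the compact-triangulable hypothesis on $\fD$ and the ambient dimension $d$ really enter. Once that geometric bound is in hand, the rest is bookkeeping: the layer-cake integration, plugging into Lemma~\ref{L:CohenSteiner}, and invoking \v Cech stability. No additional assumptions on $\cX,\cY$ beyond finiteness and inclusion in $\fD$ are needed, so the constant $C_\fD$ is genuinely intrinsic to $\fD$.
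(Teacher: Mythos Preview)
Your proposal is correct and outlines precisely the argument behind the cited result. Note, however, that the paper itself does not supply a proof of this lemma: it is stated as a citation to Kusano et al.\ \cite{kusano2017kernel}[Corollary~5], which in turn rests on the total-persistence bound of Cohen-Steiner et al.\ \cite{cohen2010lipschitz}. Your derivation---the counting estimate $N(\epsilon)\le C_\fD\,\epsilon^{-d}$ for intervals of length exceeding $\epsilon$, the layer-cake integration requiring $p'>d$, and then insertion into Lemma~\ref{L:CohenSteiner}---is exactly the route taken in those references. The counting bound itself is stated in \cite{cohen2010lipschitz} for sublevel-set filtrations of Lipschitz functions on triangulable compacta; it applies here because the \v Cech filtration of $\cX\subseteq\fD$ is the sublevel-set filtration of the $1$-Lipschitz distance function $x\mapsto d(x,\cX)$ restricted to $\fD$, which is where the triangulability hypothesis enters. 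One small remark: the paper's Lemma~\ref{L:PDandHD} records the stability constant as $2$ (via Gromov--Hausdorff), whereas for the distance function on a common ambient space the sharp constant is $1$, consistent with what is used in Lemma~\ref{L:CohenSteiner} and in your final step.
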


\begin{lemma}\label{Thrm:PropertiesKernelIncoVar}\
\begin{itemize}
\item [(A)] Let $d<r$. 
\begin{itemize}
	\item [(1)] The kernel \eqref{E:KernelU-StatisticForIncoVar} is $\rho$-Hölder continuous w.r.t.\ the product Hausdorff metric on $\dN \times \dN$ for each $\rho \in (0,1-d/r)$
\begin{align*}
	\big| h(\cX,\cY) - h(\cX',\cY') \big| 
	\leq L_{\rho} ~\big(d_H(\cX,\cX') + d_H(\cY,\cY')\big)^\rho,
\end{align*}
where the Hölder constant satisfies
$$
	L_\rho = 4 \Big(	\frac{r}{r-d} \  \frac{r(1-\rho)}{r(1-\rho)-d} \ C_{\fD}^2 \  (\diam \fD)^{r(2-\rho)-2d} \Big)^{1/r}.
$$
	\item [(2)] $h$ satisfies for any $\cU,\cV \in \dN |_\fD$ (i.e., point processes which are restricted to $\fD$)
$$
	h(\cU,\cV) \le  \Big(	\frac{r}{r-d} \ C_{\fD} \ (\diam \fD)^{r-d} \Big)^{1/r} =: C^*.
$$
\end{itemize}
\item [(B)] Let $\cX,\cY,\cX',\cY$ be finite with cardinality at most $T$.
	\begin{itemize}
		\item [(1)] $h$ satisfies for any $\cX,\cY \in \dN |_\fD$ with cardinality at most $T$
$$
	h(\cX,\cY) \le  \Big( (T^{k+1} + T^{k+2})	 \ (\diam \fD)^{r} \Big)^{1/r} =: C_*
$$
	if the underlying dimension of the persistence diagram is $k$.
	\item [(2)] The kernel \eqref{E:KernelU-StatisticForIncoVar} is $1-1/r$-Hölder continuous w.r.t.\ the product Hausdorff metric on $\dN \times \dN$. 
	\end{itemize}
\end{itemize}
\end{lemma}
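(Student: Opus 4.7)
\textbf{Proof plan for Lemma \ref{Thrm:PropertiesKernelIncoVar}.} The plan is to isolate the uniform upper bound on $W_r(\pd(\cU),\pd(\cV))$ for point clouds contained in $\fD$ first, then feed it into the standard algebraic identity $|a^2 - b^2| = |a-b|\,(a+b)$ in order to promote a Hölder estimate for $W_r$ into one for the kernel $h = \tfrac{1}{2} W_r^2(\pd(\cdot),\pd(\cdot))$. The cost parameter $p'$ in the Cohen--Steiner / Kusano type bounds will be tuned differently in case (A) (where $d<r$ so Lemma~\ref{L:WassersteinBound} applies directly) and in case (B) (where the finite-cardinality hypothesis replaces the dimension restriction by a counting argument).

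First I would establish (A)(2). Since $\cU,\cV\subseteq\fD$ we have $d_H(\cU,\cV)\le\diam\fD$, so Lemma~\ref{L:WassersteinBound} applied with $p'=r$ (admissible because $r>d$) yields the stated bound $W_r(\pd(\cU),\pd(\cV))\le C^*$. For (A)(1) I would then estimate
\begin{align*}
\bigl|h(\cX,\cY)-h(\cX',\cY')\bigr|
&\le \tfrac12 \bigl|W_r(\pd(\cX),\pd(\cY)) - W_r(\pd(\cX'),\pd(\cY'))\bigr|\\
&\qquad\cdot\bigl(W_r(\pd(\cX),\pd(\cY)) + W_r(\pd(\cX'),\pd(\cY'))\bigr),
\end{align*}
bound the first factor by the triangle inequality for $W_r$ and the second by $2C^*$ from (A)(2). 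Then Lemma~\ref{L:WassersteinBound} with $p'=r(1-\rho)\in(d,r)$ (which is the constraint forcing $\rho<1-d/r$) converts $W_r(\pd(\cX),\pd(\cX'))$ into $B\,d_H(\cX,\cX')^{\rho}$ for the constant $B$ whose $r$-th power is $\tfrac{r(1-\rho)}{r(1-\rho)-d} C_{\fD} (\diam\fD)^{r(1-\rho)-d}$. Combining with $a^{\rho}+b^{\rho}\le 2(a+b)^{\rho}$ and multiplying the two $r$-th-root constants gives exactly the stated $L_{\rho}$.

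For case (B) the two main ingredients are a cardinality bound on $\pd(\cX)$ and a substitute for Lemma~\ref{L:WassersteinBound}. In the \v Cech filtration on $T$ points in $\fD$, each dimension-$k$ persistence point is born with a $k$-simplex and dies with a $(k+1)$-simplex, so $\#\pd(\cX)\le T^{k+1}+T^{k+2}$ (crudely bounding $\binom{T}{k+1}+\binom{T}{k+2}$); each point $(b,d)$ further satisfies $d-b\le\diam\fD$. Matching every point of $\pd(\cX)\cup\pd(\cY)$ to the diagonal then gives $W_r^r\le (T^{k+1}+T^{k+2})(\diam\fD)^r$, which yields (B)(1). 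For (B)(2) I would invoke Lemma~\ref{L:CohenSteiner} with $p'=1$ together with the stability bound $W_{\infty}\le d_H$, so that $\pers_1(\pd(\cX))\le \#\pd(\cX)\cdot\diam\fD$ is controlled by the same $T$-polynomial as before, and $W_r(\pd(\cX),\pd(\cX'))$ becomes Lipschitz-of-order-$(1-1/r)$ in $d_H$. Inserting this into the same $(a^2-b^2)$ splitting as in (A)(1), with the uniform $W_r$-bound $C_*$ controlling the $(a+b)$ factor, produces the asserted $(1-1/r)$-Hölder estimate for $h$.

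The technically delicate points are the tuning $p' = r(1-\rho)$ in (A)(1) (which is what pins down the admissible range of $\rho$ and produces the somewhat baroque expression for $L_{\rho}$) and the combinatorial cardinality estimate for $\#\pd(\cX)$ in (B); everything else is bookkeeping with the triangle inequality for $W_r$ and with $a^{\rho}+b^{\rho}\le 2(a+b)^{\rho}$. No step seems to require machinery beyond Lemmas~\ref{L:CohenSteiner} and~\ref{L:WassersteinBound}, so I do not expect a real obstacle -- just careful constant-chasing to match the statement.
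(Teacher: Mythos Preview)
Your proposal is correct and follows essentially the same route as the paper: Lemma~\ref{L:WassersteinBound} with $p'=r$ for (A)(2), the reverse triangle inequality together with Lemma~\ref{L:WassersteinBound} at $p'=r(1-\rho)$ for (A)(1), and Lemma~\ref{L:CohenSteiner} with the crude total-persistence bound $\pers_{p'}(\pd(\cX))\le (T^{k+1}+T^{k+2})(\diam\fD)^{p'}$ for part (B). The constant-chasing and the inequality $a^\rho+b^\rho\le 2(a+b)^\rho$ that you highlight are exactly what the paper uses as well.
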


\begin{proof}
{\it Part (A).} We begin with statement (ii) which is a direct consequence of Lemma~\ref{L:WassersteinBound} in the special case $p'=r$. Note that both $\cU|_{\fD}$ and $\cV|_{\fD}$ are finite with probability 1 because $\fD$ is compact and elements of $\dN$ are locally finite.

For statement (i), we apply the reverse triangle inequality
\begin{align}
	|h(\cX,\cY) - h(\cX',\cY')| &= | W_r^2( \pd(\cX),\pd(\cY) ) - W_r^2( \pd(\cX'),\pd(\cY') )| \\
	\begin{split}\label{E:PropertiesKernelIncoVar0}
	&\le \Big\{ W_r( \pd(\cX),\pd(\cY) ) +  W_r( \pd(\cX'),\pd(\cY') ) \Big\} \\
	&\qquad\qquad \cdot  \Big\{ W_r( \pd(\cX),\pd(\cX') ) +  W_r( \pd(\cY),\pd(\cY') ) \Big\}.
	\end{split}
\end{align}
Again using Lemma~\ref{L:WassersteinBound}, the first factor on the RHS is uniformly bounded above as follows
\begin{align*}
	 &W_r( \pd(\cX),\pd(\cY) ) +  W_r( \pd(\cX'),\pd(\cY') ) \le 2 C^*.
\end{align*}
Moreover, for each $p'\in (d,r)$ the second factor is at most
\begin{align*}
	&\Big(	\frac{p'}{p'-d} \ C_{\fD} \ (\diam \fD)^{p'-d} \Big)^{1/r} \Big\{ d_H(\cX,\cX')^{1-p'/r} + d_H(\cY,\cY')^{1-p'/r} \Big\} \\
	&\le 2\Big(	\frac{p'}{p'-d} \ C_{\fD} \ (\diam \fD)^{p'-d} \Big)^{1/r} \Big\{ d_H(\cX,\cX') + d_H(\cY,\cY')\Big\}^{1-p'/r}.
\end{align*}
Hence, the kernel $h$ is Hölder continuous for each $\rho \in (0,1-d/r)$ with Hölder constant
$$
		4 C^* \Big( \frac{r(1-\rho)}{r(1-\rho)-d} C_\fD (\diam \fD)^{r(1-\rho)-d} \Big)^{1/r}.
$$
{\it Part (B).} We rely this time on Lemma~\ref{L:CohenSteiner} and the trivial upper bound $\pers_{p'}(\pd(\cX)) \le (T^{q+1}+T^{q+2})(\diam \fD)^{p'}$, the calculations are very similar and we skip them for sake of brevity.
\end{proof}

\begin{lemma}\label{L:ConvergenceSigmas}
Given the assumptions of Theorem~\ref{Thrm:IncoVarianceJointConvergence}, we have
$$
  \E[|\wh\sigma_{\cX}^2(s,t) - st \sigma_{\cX}^2|^2]\to 0, \quad m\to\infty \text{ and } \E[| \wh\sigma_{\cY}^2(s,t)- st\sigma_{\cY}^2)|^2], \quad n\to\infty
$$
for all $s,t\in[0,1]$.
\end{lemma}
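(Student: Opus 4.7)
The plan is to apply Hoeffding's decomposition to the U-statistic $\wh\sigma_\cX^2(s,t)$ with kernel $h(\cdot,\cdot) = \tfrac12 W_r^2(\pd(\cdot),\pd(\cdot))$ and write
\begin{align*}
\wh\sigma_\cX^2(s,t) - st\,\sigma_\cX^2
&= (a_m(s,t) - st)\sigma_\cX^2 + U_m^{(s,t)}(h_2) + L_m^{(s,t)},
\end{align*}
where $L_m^{(s,t)}$ collects the two linear pieces built from $h_1(\cdot) = h_\cX(\cdot) - \sigma_\cX^2$ and $U_m^{(s,t)}(h_2)$ is the degenerate part, indexed over $i\le \lf ms\rf$, $j\le \lf mt\rf$. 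By the triangle inequality in $L^2$ it suffices to control each of the three summands separately. The argument for $\cY$ is identical.

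First I would verify Assumption~\ref{AssumptionFCLT}~$(p,q,\varrho)$ with the choice $p=1$, $q=\infty$ under both cases of Theorem~\ref{Thrm:IncoVarianceJointConvergence}: in Case~(a) take $\varrho = 1-1/r$ via Lemma~\ref{Thrm:PropertiesKernelIncoVar}(B2), and in Case~(b) take any $\varrho \in (0, 1-d/r)$ via Lemma~\ref{Thrm:PropertiesKernelIncoVar}(A1). Since $h$ is uniformly bounded on $\fD$-valued point clouds by Lemma~\ref{Thrm:PropertiesKernelIncoVar}(A2)/(B1), we have $K(\infty)<\infty$, so the moment condition is trivial. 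The approximation condition~\eqref{Cond:Approximation} then reduces to $\sum_m m\,\E[d_H(\cX_0,\cX_0^{(m)})^\varrho] < \infty$, which is precisely the hypothesis of Theorem~\ref{Thrm:IncoVarianceJointConvergence} under the two respective cases.

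Next I would bound the three pieces. The deterministic offset satisfies $|a_m(s,t)-st|\le 3/(m-1) = O(1/m)$. For the degenerate part, restricting the quadruple index sum in Corollary~\ref{T:VarDegenerateU} to $i\le \lf ms\rf$, $j\le \lf mt\rf$ (which only makes the sum smaller) still yields a bound of order $m^2$, so $\V(U_m^{(s,t)}(h_2)) = O(1/m^2)$. For the linear part, $h_1$ inherits the boundedness and $\varrho$-Hölder continuity of $h$ (see the remark preceding Theorem~\ref{Thrm:FCLTforUStatistic}); coupling with the independent copy $\cX_k^{(k)}$ of $\cX_0$ yields
\begin{align*}
|\Cov(h_1(\cX_0), h_1(\cX_k))| \le 2\|h_1\|_\infty L\,\E[d_H(\cX_0,\cX_0^{(k)})^\varrho],
\end{align*}
which is summable in $k$ under the hypothesis. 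A standard stationary-variance calculation, using $|b_{m,i}(\cdot)|\le 1$, then gives $\V(L_m^{(s,t)}) = O(1/m)$.

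The main obstacle is ensuring that the rather weak approximation condition (controlling only the fractional moment $d_H^\varrho$ with $\varrho<1$) is strong enough to invoke Corollary~\ref{T:VarDegenerateU}. This is resolved by exploiting the uniform boundedness of $h$: taking $q=\infty$ (hence $p=1$) in the Hölder conjugate pair makes the moment condition trivial and makes~\eqref{Cond:Approximation} coincide exactly with the hypothesis of Theorem~\ref{Thrm:IncoVarianceJointConvergence}. Combining the three vanishing contributions yields $\E[|\wh\sigma_\cX^2(s,t)-st\sigma_\cX^2|^2] = O(1/m)$, which proves the claim.
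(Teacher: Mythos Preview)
Your proposal is correct and follows essentially the same route as the paper. The paper's proof simply invokes Corollary~\ref{C:VarUh} (whose proof performs exactly the Hoeffding decomposition you describe, bounding the degenerate part via Corollary~\ref{T:VarDegenerateU} and the linear part via the same coupling argument), and implicitly notes that the argument extends from $s=t=1$ to the partial-sum version; you have just unpacked this extension explicitly, including the deterministic offset $a_m(s,t)-st$. Your verification of Assumption~\ref{AssumptionFCLT}~$(1,\infty,\varrho)$ is also identical to Step~1 in the proof of Theorem~\ref{Thrm:IncoVarianceJointConvergence}.
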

\begin{proof}
We make use of Corollary~\ref{C:VarUh}.
\begin{align*}
	&\E[ |\wh\sigma_{\cX}^2(s,t) - st \sigma_{\cX}^2|^2 ] \\
	&= \frac{1}{4} \ \E\Big[ \Big( \frac{1}{m(m-1)} \sum_{i=1}^{\lf ms \rf} \sum_{j=1\atop j\not= i}^{\lf mt \rf}  W_r^2(\pd(\cX_i), \pd(\cX_j)) - st \E[ W_r^2(\pd(\cX_0), \pd(\cX')) ] \Big)^2 \Big] \\
	&\lesssim n^{-1}
\end{align*}
The result holds in the same spirit for $\wh\sigma_{\cY}^2(s,t)$.
\end{proof}

\begin{proof}[Proof of Theorem~\ref{Thrm:IncoVarianceJointConvergence}]
Again the proof splits in four major steps and is very similar to the proof of Theorem~\ref{Thrm:WeakConvergenceTwoSampleRelevantDifference_General}.

{\it Step 1.}
In the first step we verify that the conditions of Assumption~\ref{AssumptionFCLT}~(p,q,$\rho$) are satisfied in case (a) and case (b).

{\it Case (a).} If $\# \cX_i, \#\cY_j$ are uniformly bounded with probability 1 for all $i,j\in\Z$ the kernel in \eqref{E:KernelU-StatisticForIncoVar} satisfies part (A1) in Assumption~\ref{AssumptionFCLT}~(p,q,$\rho$) for $\rho=1-1/r$ and any $q\ge 1$ by Lemma~\ref{Thrm:PropertiesKernelIncoVar} part (B).

{\it Case (b).} The kernel satisfies part (A1) in Assumption~\ref{AssumptionFCLT}~(p,q,$\rho$)  for any $q\geq 1$ and each $\rho < 1-d/r$ by Lemma~\ref{Thrm:PropertiesKernelIncoVar} part (A).

Thus, if we choose $p=1$ and $q=\infty$ in both cases (a) and (b), the data $(\cX_i)_i$ and $(\cY_i)_i$ satisfy the approximation condition (A2) in Assumption \ref{AssumptionFCLT}~(p,q,$\rho$) because we require 
$$
	\sum_{m \ge 1} m~ \E[ d_H( \cX_0, \cX_0^{(m)} )^{\rho} ] < \infty \text{ and } \sum_{m \ge 1} m ~ \E[ d_H( \cY_0, \cY_0^{(m)} )^{\rho} ] < \infty 
$$
for $\rho=1-1/r$ in case (a) and in case (b) for some $\rho\in (0,1-d/r)$ where additionally $d<r$.
Hence, we can apply Theorem \ref{Thrm:FCLTforUStatistic} to the processes \eqref{E:IncoVarProcess} and obtain due to the independence between the samples $(\cX_i)_i$ and $(\cY_i)_i$
\begin{align} \label{E:IncoVarianceJointConvergence1}
		\begin{pmatrix}
			\sqrt{m}(\wh\sigma_\cX^2 (s,t) - st \sigma_\cX^2) : 0\le s,t\le 1 \\
			\sqrt{n}(\wh\sigma_\cY^2 (s,t) - st \sigma_\cY^2): 0\le s,t\le 1
		\end{pmatrix}
		&\Rightarrow
		\begin{pmatrix}
			t \cW_{\Gamma_\cX}(s) + s \cW_{\Gamma_\cX}(t)  : 0\le s,t\le 1 \\
			t \cW_{\Gamma_\cY}(t) + s \cW_{\Gamma_\cY}(s)  : 0\le s,t\le 1
		\end{pmatrix}
\end{align}
in the product space $D_2\times D_2$, where $D_2=D([0,1]^2)$. Note that the Wiener processes on the right-hand side $\cW_{\Gamma_\cX}, \cW_{\Gamma_\cY}$ are necessarily independent. Thus, we can characterize the Wiener processes as $\cL(\cW_{\Gamma_\cX}) = \cL(\sqrt{\Gamma_\cX} B_1)$ and $\cL(\cW_{\Gamma_\cY}) = \cL(\sqrt{\Gamma_\cY} B_2)$, where $B_1$ and $B_2$ denote two independent Brownian motions on $[0,1]$.

{\it Step 2.}
Now, we can consider the joint behavior of two dimensional random vector given on the left-hand side of \eqref{E:IncoVarianceJointConvergence0}. The first component is
\begin{align}
	&\sqrt{m+n}( \wh{D}_{m,n}^2 - D^2) = \sqrt{m+n}( \wh{D}_{m,n} - D)( \wh{D}_{m,n} + D) \nonumber\\
	\begin{split}\label{E:IncoVarianceJointConvergence2}
	&= \left(\sqrt{\frac{m+n}{m}} \sqrt{m} (\wh\sigma_{\cX}^2(1,1) - \sigma_{\cX}^2) 
	- \sqrt{\frac{m+n}{n}} \sqrt{n} (\wh\sigma_{\cY}^2(1,1) - \sigma_{\cY}^2) \right)\\
	&\quad\qquad \times \left( \wh\sigma_{\cX}^2(1,1) - \wh\sigma_{\cY}^2(1,1) + \sigma_{\cX}^2 - \sigma_{\cY}^2  \right) =: Z_{m,n}. 
	\end{split}
\end{align}	
The second component is an integral and the square root of its integrand is
\begin{align}
	&\sqrt{m+n}(\wh{D}_{m,n}^2(s,t) \mp ( stD )^2 - (st\wh{D}_{m,n})^2) \nonumber\\
	&=\sqrt{m+n} ( \wh{D}_{m,n}(s,t) - stD )( \wh{D}_{m,n}(s,t) + stD ) \nonumber\\ 
	&\quad - (st)^2\sqrt{m+n}(\wh{D}_{m,n}- D)(\wh{D}_{m,n}+D) \nonumber\\ 
	\begin{split}\label{E:IncoVarianceJointConvergence3}
	&= \left(\sqrt{\frac{m+n}{m}} \sqrt{m} (\wh\sigma_{\cX}^2(s,t) - st\sigma_{\cX}^2) 
	- \sqrt{\frac{m+n}{n}} \sqrt{n} (\wh\sigma_{\cY}^2(s,t) - st\sigma_{\cY}^2)\right)\\
	&\quad\qquad \times ( \wh\sigma_{\cX}^2(s,t) + st\sigma_{\cX}^2 - \wh\sigma_{\cY}^2(s,t) - st\sigma_{\cY}^2  )\\
	&\quad\qquad - (st)^2 \sqrt{m+n}(\wh{D}_{m,n}- D)(\wh{D}_{m,n}+D) =: \wt Z_{m,n}(s,t).
\end{split}
\end{align}
Note that the last term on the right-hand side of \eqref{E:IncoVarianceJointConvergence3} is the $(st)^2$-multiple of the left-hand side of \eqref{E:IncoVarianceJointConvergence2}.

{\it Step 3.}
Next, we prove the desired convergence, which essentially is an application of the continuous mapping theorem. To this end, we define appropriate mappings $F_1, F_2$ on $D_2\times D_2$.

We begin with the first coordinate which is given in \eqref{E:IncoVarianceJointConvergence2} and define the functional
$$
	F_1 \colon  D_2\times D_2 \to \R, \quad (u,v)\mapsto \Big(\frac{u(1,1)}{\sqrt{\tau}}  - \frac{v(1,1)}{\sqrt{1-\tau}}  \Big) (2\sigma_{\cX}^2-2\sigma_{\cY}^2).
$$
Given two continuous functions $u,v$ in $D_2$, $F_1$ is continuous at $(u,v)$ because the point evaluation is a continuous functional at  continuous functions in the Skorokhod space.

Moreover, $m/(m+n)\to\tau$ as well as $\wh\sigma_\cX^2\to \sigma_\cX^2$ and $\wh\sigma_\cY^2\to \sigma_\cY^2$ in probability by Lemma~\ref{L:ConvergenceSigmas}. And all in all, it follows with some calculations that
\begin{align}\label{E:IncoVarianceJointConvergence3b}
	Z_{m,n} - F_1(\sqrt{m}(\wh\sigma_\cX^2(1,1)-\sigma^2_\cX),\sqrt{n}(\wh\sigma_\cY^2(1,1)-\sigma^2_\cY) ) \to 0 
\end{align}
in probability.

Moreover, as the limits on the right-hand side of \eqref{E:IncoVarianceJointConvergence1} are continuous elements of $D_2$, we can apply the continuous mapping theorem to $F_1$
\begin{align*}
	&F_1(\sqrt{m}(\wh\sigma_\cX^2(1,1)-\sigma^2_\cX),\sqrt{n}(\wh\sigma_\cY^2(1,1)-\sigma^2_\cY) ) \\
	 &\quad \Rightarrow F_1\big( (t W_{\Gamma_\cX}(s)+s W_{\Gamma_\cX}(t))_{s,t}, (t W_{\Gamma_\cY}(s)+s W_{\Gamma_\cY}(t))_{s,t} \big) \nonumber \\
	&\qquad =  \left(\frac{2}{\sqrt{\tau}} W_{\Gamma_X}(1) 
	- \frac{2}{\sqrt{1-\tau}} W_{\Gamma_Y}(1)\right)(2\sigma_{\cX}^2 - 2\sigma_{\cY}^2 ).
\end{align*}
Next, we consider $\wt Z_{m,n}$ which is given in \eqref{E:IncoVarianceJointConvergence3}. Define the operator
\begin{align*}
	&F_2 \colon D_2\times D_2 \to D_2,\\
	&\qquad \quad (u,v)\mapsto \Big( 2st ~ \Big\{ \frac{u(s,t)}{\sqrt \tau} - \frac{ v(s,t)}{\sqrt{1-\tau}}\Big\} (\sigma_\cX^2 - \sigma_\cY^2 ) - (st)^2 F_1(u,v) \Big)_{s,t}.
\end{align*}
Now, it follows with some calculations that
\begin{align}
\begin{split}\label{E:IncoVarianceJointConvergence3c}
	d_2\Big( \wt Z_{m,n}, F_2\big( &[ 
			\sqrt{m}(\wh\sigma_\cX^2 (s,t) - st \sigma_\cX^2)]_{s,t},\\
			&	[\sqrt{n}(\wh\sigma_\cY^2 (s,t) - st \sigma_\cY^2)]_{s,t}
			\big) \Big) \to 0 \text{ in probability.}
\end{split}\end{align}
Observe that $F_2$ is discontinuous only on $\mathrm{Dis}(F_2) = D_2\backslash\cC_2 \times D_2\backslash \cC_2$, where $\cC_2$ denotes the set of continuous functions on $[0,1]^2$.
Now, 
$$	
	\p\big( ( [ tW_{\Gamma_X}(s) + sW_{\Gamma_X}(t) ]_{s,t}, [ tW_{\Gamma_Y}(s) + sW_{\Gamma_Y}(t) ]_{s,t} ) \in \mathrm{Dis}(F_2) \big) = 0.
$$
Consequently, an application of the continuous mapping theorem yields
\begin{align*}
	 &F_2\big( [	\sqrt{m}(\wh\sigma_\cX^2 (s,t) - st \sigma_\cX^2)]_{s,t},	[\sqrt{n}(\wh\sigma_\cY^2 (s,t) - st \sigma_\cY^2)]_{s,t} \big) \\
			  &\Rightarrow F_2\big( [ tW_{\Gamma_X}(s) + sW_{\Gamma_X}(t) ]_{s,t}, [ tW_{\Gamma_Y}(s) + sW_{\Gamma_Y}(t) ]_{s,t} \big) \\
	 &= \Bigg( 2st \left\{  \frac{ tW_{\Gamma_X}(s) + sW_{\Gamma_X}(t) }{\sqrt{\tau}} 
	- \frac{tW_{\Gamma_Y}(s) + sW_{\Gamma_Y}(t) }{\sqrt{1-\tau}}  \right\} (\sigma_\cX^2 - \sigma_\cY^2 ) \\
	&\quad\qquad - 4(st)^2 \left(\frac{W_{\Gamma_X}(1) }{\sqrt{\tau}} 
	- \frac{ W_{\Gamma_Y}(1)}{\sqrt{1-\tau}}\right)(\sigma_{\cX}^2 - \sigma_{\cY}^2 ). \Bigg)_{s,t}
\end{align*}
in $D_2$. 
The law of the last right-hand side equals
\begin{align*}
	&\Bigg( \left\{ \sqrt{\frac{\Gamma_X}{\tau}} (tB_1(s) + sB_1(t)) - \sqrt{\frac{\Gamma_Y}{1-\tau}} (tB_2(s) + sB_2(t)) \right\}  \times 2st(\sigma_\cX^2 - \sigma_\cY^2 ) - 2(st)^2\xi B(1) \Bigg)_{s,t} \\
	&\overset{\cL}{=}~ \Big( \xi \Big(st\big(tB_1(s) + sB_1(t) - 2stB_1(1)\big) \Big) \Big)_{s,t}.
\end{align*}
Moreover, set
$$
	G\colon D_2 \to\R, \quad u\mapsto \left\{ \int_{[0,1]^2} u(s,t)^2 \nu(\diff( s, t)) \right\}^{1/2}.
$$
Since taking the pointwise square, integration and the square root are $\p_{(B_1,B_2)}$-a.s. continuous functionals on $D_2$, the composition $ G\circ F_2$ is an $\p_{(B_1,B_2)}$-a.s. continuous function. Thus, using the result from \eqref{E:IncoVarianceJointConvergence3c}, we have on the one hand 
\begin{align}
	\begin{split}\label{E:IncoVarianceJointConvergence3d}
	&\sqrt{m+n}\wh V_{m,n} - G\circ F_2\big( [	\sqrt{m}(\wh\sigma_\cX^2 (s,t) - st \sigma_\cX^2)]_{s,t},	[\sqrt{n}(\wh\sigma_\cY^2 (s,t) - st \sigma_\cY^2)]_{s,t} \big) \\
	&= G( \wt Z_{m,n}) -  G\circ F_2\big( [	\sqrt{m}(\wh\sigma_\cX^2 (s,t) - st \sigma_\cX^2)]_{s,t},	[\sqrt{n}(\wh\sigma_\cY^2 (s,t) - st \sigma_\cY^2)]_{s,t} \big) \to 0
	\end{split}
\end{align}
in probability.

And on the other hand, an application of the continuous mapping theorem yields 
\begin{align*}
	&G\circ F_2\big( [	\sqrt{m}(\wh\sigma_\cX^2 (s,t) - st \sigma_\cX^2)]_{s,t},	[\sqrt{n}(\wh\sigma_\cY^2 (s,t) - st \sigma_\cY^2)]_{s,t} \big) \\
	&\quad \Rightarrow G\Big(  F_2\big( [ tW_{\Gamma_X}(s) + sW_{\Gamma_X}(t) ]_{s,t}, [ tW_{\Gamma_Y}(s) + sW_{\Gamma_Y}(t) ]_{s,t} \big) \Big) \\
	&\qquad \overset{\cL}{=}
	\left\{ \xi^2 \int_{[0,1]^2} \Big[ st\big(tB_1(s) + sB_1(t) - 2st B_1(1)\big) \Big]^2  \, \nu(\diff( s, t)) \right\}^{1/2}.
\end{align*}
{\it Step 4.} Finally, the map 
$$
	D_2\times D_2 \to \R^2, \quad (u,v) \mapsto
	\begin{pmatrix}
		F_1(u,v)\\
		G\circ F_2(u,v)
	\end{pmatrix}
$$
is continuous w.r.t.\ the product metrics and the convergence in probability shown in \eqref{E:IncoVarianceJointConvergence3b} and \eqref{E:IncoVarianceJointConvergence3d} carries over to the two-dimensional setting as well. This completes the proof.
\end{proof}

\subsection{Functional CLT for U-statistics}\label{Subsection_DetailsUStats}

\begin{proof}[Proof of Theorem~\ref{T:2ParaDonsker}]
Let $g\colon (D_1, \fD_1) \to (D_2, \fD_2)$ be defined by $g(x)(s,t) := tx(s) + sx(t)$, $s,t\in[0,1]$. Consider the process 
	\begin{align*}
		X_n \colon [0,1]\to\R,\quad  t\mapsto \frac{1}{\sqrt{n}} \sum_{i=1}^{ \floor{n t}} Z_i	
	\end{align*}
Then $Y_n= g(X_n)$ and $ X_n \Rightarrow \cW_{\Gamma}$ in $(D_1,\fD_1)$ by the functional central limit theorem (Theorem A.1) given in Aue et al. \cite{aue2009break}.
Thus it remains to verify that $g\colon D_1 \to D_2$ is continuous with respect to the Skorokhod metrics ($q=1,2$)
\begin{align}\label{SkorokhodMetric}
	d_q(x,y) := \inf_{\lambda \in \Lambda_q} \left\{ \max\Big\{ \sup_t |x(t) - y(\lambda(t))|, \sup_t \|\lambda(t) -t\| \Big\} \right\}, \quad x,y\in D_q,
\end{align}
where the infimum is taken over the class $\Lambda_q$ of all ``time transformations'' $\lambda(t)=(\lambda_1(t_1),\ldots, \lambda_q(t_q))$ such that
$\lambda_r\colon[0,1] \to [0,1]$ is continuous and strictly increasing with $\lambda_r(0)=0$ and $\lambda_r(1)=1$.\\
Now, assume that $x,y \in D_1$ with $d_1(x,y) < \delta$. Then there is a $\lambda^* \in \Lambda_1$ such that 
\begin{align*}
	\sup_t |x(t) - y(\lambda^*(t))| < \delta \quad\text{and}\quad \sup_t |\lambda^*(t) -t| < \delta. 
\end{align*}
Put $\wt\lambda = (\lambda^*,\lambda^*) \in \Lambda_2$, then 
\begin{align*}
	&| g(x)(s,t) - g(y)(\wt\lambda(s,t)) |\\
	=& |tx(s) \mp \lambda^*(t)x(s) -\lambda^*(t)y(\lambda^*(s)) + sx(t) \mp \lambda^*(s)x(t) - \lambda^*(s)y(\lambda^*(t))|\\
	\leq& |t-\lambda^*(t)| |x(s)| + |\lambda^*(t)||x(s) - y(\lambda^*(s))| + |s-\lambda^*(s)| |x(t)|\\
	&+ |\lambda^*(s)||x(t) - y(\lambda^*(t))|\\
	\leq& 2(1+ \sup |x|)\delta
\end{align*}
as well as 
\begin{align*}
	\|\wt\lambda(s,t) - (s,t)\| = |\lambda^*(s) - s| + |\lambda^*(t) - t| < 2\delta. 
\end{align*}
Now, let $\epsilon>0$ and $x\in D_1$ be arbitrary but fixed. Set $\delta \coloneqq \epsilon/ [ 2 (1+ \sup |x|)]$.
Then we may conclude for all $y\in D_1$ with $d_1(x,y) < \delta$, that
\begin{align*}
	d_2(g(x),g(y)) < 2(1+ \sup |x|)\delta = \epsilon.
\end{align*}
This shows the continuity of $g$ and the assertion immediately follows from the continuous mapping theorem, i.e., 
$Y_n = g(X_n)$ converges weakly to $g(X) = Y$. 
\end{proof}

\begin{proof}[Proof of Theorem~\ref{T:MomentCondition}]
The proof is divided into several steps.\\[5pt]

{\it (1) Reduction to equidistant grid.} 
Firstly, we argue that it is sufficient to consider blocks $A= (s, u]\times (t, v]$ with corner points $(s,t), (u,v)$ in the
equidistant grid $\Pi_n$ of $[0,1]^2$, where the grid is given by
$$	
		\Pi_n = \Big\{ \Big(\frac{i}{n}, \frac{j}{n} \Big): 0\le i,j \le n \Big\}.
$$
Let $(s,t)\in [0,1]^2$ and define $\ul s = \floor{ns}/n$ and $\ul t = \floor{nt}/n$. Then $(\ul s,\ul t)$ is the point on the grid 
$\Pi_n$ which satisfies $U_n(h_2)(s,t) = U_n(h_2)(\ul s, \ul t)$. In particular, setting $\ul A = (\ul s, \ul u]\times (\ul t, \ul v]$ for 
$A=(s, u]\times (t, v]$ yields immediately $U_n(h_2)(A) = U_n(h_2)(\ul A)$.
So, in the following, we can assume that $A=(s,u]\times (t,v]$ for $(s,t), (u,v)\in\Pi_n$. In particular, we will exploit that in
this case both $t-s, v-u \ge n^{-1}$.\\[5pt]

{\it (2) Position of the block.} Secondly, we argue that it is sufficient to study two special types of blocks $A$ on the grid $\Pi_n$ only:
\begin{itemize}
	\item[(a)] $A = (s,u] \times (t,v]$ with $s< u \le t < v$, is termed a block ``above the diagonal'';
	\item[(b)] $A = (s,u] \times (s,u]$ with $s<u$, is termed a block ``on the diagonal''.
\end{itemize}
Indeed, every block $A \subset [0,1]^2$ which is not ``on the diagonal'' can be split into four blocks $A_1, A_2, A_3, A_4$ such that exactly 
one block, $A_1$ say, is of type (b) (``on the diagonal'') and the remaining three blocks, $A_2,A_3,A_4$, are either of type (a) (``above the 
diagonal'') or are of type (a) when being ``transposed''. By the latter we mean that the transpose of a block $B=(s,u]\times (t,v]$ is 
$B^T = (t,v]\times (s,u]$, i.e., the block $B$ is reflected along the diagonal. Due to the symmetry of the functional $h_2$ 
we have that the increment over a block coincides with the increment over the corresponding transposed block, i.e., 
$ U_n(h_2)(B) = U_n(h_2)(B^T)$, and hence we can assume w.l.o.g.\ that $A_2,A_3,A_4$ are all of type (a). 

Having made this partitioning of a block $A$ which is ``not on the diagonal'', observe that by definition of the $2$-dimensional increment, common corner points of neighboring 
blocks cancel each other and therefore $U_n(h_2)(A) = \sum_{i=1}^4 U_n(h_2)(A_i)$. Applying the Cauchy-Schwarz inequality yields
\begin{align}\label{Eq:TypeAIncrement}
	\E[ |\sqrt{n} U_n(h_2)(A)|^2 ] 
	\le 4 \sum_{i=1}^4 \E[ |\sqrt{n} U_n(h_2)(A_i)|^2 ].   
\end{align}
Thus, it remains to proof \eqref{E:MomentCondition0} for blocks $A$ of type (a) or type (b), because then - as an immediate consequence 
of \eqref{Eq:TypeAIncrement} - we conclude 
\begin{align*}
	\E[ |\sqrt{n} U_n(h_2)(A)|^2 ] \le 4C \ \sum_{i=1}^4 |A_i|^{3/2} \leq 4C |A|^{3/2}.
\end{align*}

{\it (3) Hölder continuity.} We use the Hölder continuity of $h_2$ (see \eqref{Eq:LipContinuityH2}) in combination with the $L^p$-$m$-property of the process to obtain continuity properties which are necessary for the moment bound. 
To this end, let $Y_1,\ldots,Y_4$ and  $Z_1,\ldots,Z_4$ be $M$-valued random elements with marginal distribution equal to $\p_X$ and an arbitrary joint distribution. Then
\begin{align}
	&| \E[ h_2(Y_1,Y_2) h_2(Y_3,Y_4)   ] - \E[h_2(Z_1,Z_2) h_2(Z_3,Z_4) ] | \nonumber \\
	=&| \E[ h_2(Y_1,Y_2)( h_2(Y_3,Y_4) - h_2(Z_3,Z_4) ) ] \nonumber\\
	&+ \E[ h_2(Z_3,Z_4)( h_2(Y_1,Y_2) - h_2(Z_1,Z_2) ) ] | \nonumber\\
	\leq& \E[|h_2(Y_1,Y_2)|^q]^{1/q}~ \E[ | h_2(Y_3,Y_4) - h_2(Z_3,Z_4) |^p  ]^{1/p}\nonumber\\
	&\quad + \E[|h_2(Z_3,Z_4)|^q]^{1/q}~ \E[ |h_2(Y_1,Y_2) - h_2(Z_1,Z_2) |^p  ]^{1/p}\nonumber\\
		&\le 3L~  \sup_{U,V \sim \p_X} \E[|h_2(U,V)|^q]^{1/q} \Big\{ \E [ (d(Y_3,Z_3) + d(Y_4,Z_4))^{\rho p}]^{1/p} \nonumber\\ 
	&\quad + \E[(d(Y_1,Z_1) + d(Y_2,Z_2))^{\rho p}]^{1/p} \Big\} \nonumber\\
	&\le 3L~\sup_{U,V \sim \p_X} \E[|h_2(U,V)|^q]^{1/q} \Big\{  \E [d(Y_3,Z_3)^{\rho p}]^{1/p} + \E[d(Y_4,Z_4)^{\rho p}]^{1/p} \nonumber\\ 
	&\quad\qquad\qquad\qquad\qquad + \E[d(Y_1,Z_1)^{\rho p}]^{1/p} + \E[ d(Y_2,Z_2)^{\rho p}]^{1/p} \Big\} \nonumber\\
	&= 3L \sup_{U,V \sim \p_X} \E[|h_2(U,V)|^q]^{1/q} \ \sum_{i =1}^4 \E[d(Y_i,Z_i)^{\rho p } ]^{1/p} \label{E:MomentCondition3},
\end{align}
where $p,q \geq 1$ are H\"older conjugate and where the second inequality is a consequence of \eqref{Eq:LipContinuityH2}; the last inequality is a consequence of the Minkowski inequality and the fact that $(a+b)^\rho \le a^\rho+b^\rho$ for $a,b\in\R_+$. 

Furthermore  
\begin{align*}
	\E[|h_2(U,V)|^q]^{1/q} 
	\leq \E[|h(U,V)|^q]^{1/q} + \theta + \E[|h_1(U)|^q]^{1/q} + \E[|h_1(V)|^q]^{1/q}
\end{align*}
by Minkowski's inequality and since - for $X'$ independent of $U$ -
\begin{align*}
	\E[|h_1(U)|^q]^{1/q} 
	&= \E\Big[ |\E[h(u,X')] - \theta|^q \Big|_{u=U} \Big]^{1/q} \\
	&\le \E[ |h(U,X')|^q ]^{1/q} + |\theta| 
	\leq \sup_{U,V\sim \p_X} \E[|h(U,V)|^q]^{1/q} + |\theta|,
\end{align*}
we have the upper bound 
\begin{align}
	\sup_{U,V \sim \p_X} \E[|h_2(U,V)|^q]^{1/q} 
	&\leq 3\left(\sup_{U,V \sim \p_X} \E[|h(U,V)|^q]^{1/q} + |\theta| \right) \nonumber \\
	&= 3(K(q)+|\theta|)\label{E:MomentCondition7}
\end{align}
with the definition from \eqref{Cond:Moment_h}.
Hence combining \eqref{E:MomentCondition3} and \eqref{E:MomentCondition7}, we obtain 
\begin{align}
	&\big|\E[ h_2(Y_1,Y_2) h_2(Y_3,Y_4)   ] - \E[h_2(Z_1,Z_2) h_2(Z_3,Z_4) ] \big|\nonumber\\
	\leq& 9L\big(K(q) + |\theta| \big) \sum_{i =1}^4 \E[d(Y_i,Z_i)^{\rho p}]^{1/p}. \label{E:MomentCondition8}
\end{align} 

{\it (4) Moment condition.} Finally, we verify the moment condition \eqref{E:MomentCondition0} for blocks $A=(s,u]\times(t,v]$, which have corner 
points on the grid $\Pi_n$ and which either lie ``above the diagonal'' (type (a)) or ``on the diagonal'' (type (b)).

{\it (a) A block ``above the diagonal''}. In this case $s< u \le t< v$ and therefore
\begin{align*}
	U_n(h_2)(A) = \frac{1}{n(n-1)} \sum_{i=\lf ns\rf + 1}^{\lf nu \rf} \sum_{j=\lf nt \rf + 1}^{\lf nv \rf} h_2(X_i,X_j),
\end{align*}
which yields
\begin{align} 
	\begin{split}\label{E:MomentCondition1}
	&n^2 (n-1)^2 \E[ | U_n(h_2)(A) |^2 ] \\
	 &=  \ \sum_{i,k = \lf ns \rf +1}^{ \lf nu\rf}  \sum_{j,\ell = \lf nt \rf+1}^{\lf nv\rf} | \E[ h_2(X_i,X_j)h_2(X_k, X_\ell) ]|. 
	 \end{split}
\end{align}
We focus w.l.o.g.\ on the subcase where $i\le k$ because of the symmetry of \eqref{E:MomentCondition1} with respect
to these indices. Moreover, we have that 
both $j,\ell > k$ because $u\le t$. Consequently, we only have to study two cases in order to derive the moment condition for \eqref{E:MomentCondition1}:
(i) $i\le k < j \le \ell$ and (ii) $i\le k < \ell \le j$. We will consider (i) in detail and as a by-product we will see that (ii) also works in the same spirit.

{\it Subcase (i)}: Let us make the following abbreviations
$$
	\alpha = k-i \ge 0, \qquad \beta = j-k > 0, \qquad \gamma = \ell-j \ge 0,
$$
so that we can utilize the stationarity of the process and perform a simple substitution as follows
\begin{align}
	&\sum_{i,k = \lf ns \rf +1}^{ \lf nu\rf}  \sum_{j,\ell = \lf nt \rf+1}^{\lf nv\rf} \1{ i\le k < j \le \ell } 
	| \E[ h_2(X_i,X_j)h_2(X_k, X_\ell) ]| \nonumber \\
	\begin{split}
	&=  \sum_{i = \lf ns \rf +1}^{\lf nu \rf} \sum_{\alpha=\lf ns\rf +1-i}^{\lf nu \rf-i} \sum_{\beta=\lf nt\rf +1-i-\alpha}^{\lf nv \rf -i-\alpha} 
	\sum_{\gamma=\lf nt\rf+1-i-\alpha-\beta}^{\lf nv \rf-i-\alpha-\beta} \1{ \alpha\ge 0, \beta> 0, \gamma\ge 0} \\
	&\qquad\qquad\qquad\qquad \cdot | \E[ h_2(X_0,X_{\alpha+\beta} )h_2(X_\alpha, X_{\alpha+\beta+\gamma} ) ]| .\label{E:MomentCondition2}  
	\end{split}
\end{align}
Next, we distinguish another three subcases depending on which of the three indices $\alpha,\beta,\gamma$ is the largest. 
This will allow us to utilize the $L^p$-$m$-approximation property of the process $(X_t)_t$ in order to verify a decay of the expectations which is sufficiently fast. 
To that end, recall that $X_t = f(\epsilon_t, \epsilon_{t-1},\ldots)$ and 
$X_t^{(m)} = f(\epsilon_t,\ldots, \epsilon_{t-m+1},$ $\epsilon_{t-m}^{(t)}, \epsilon_{t-m-1}^{(t)},\ldots  )$ as well as 
$\E[h_2(x,X')] = 0$ for all $x\in\R^d$ and $X' \sim \p_X$. Therefore, 
\begin{itemize}
	\item [($\alpha$)] if $\alpha$ is among the largest indices, then
	\begin{align*}
		& \E[ h_2(X_0,X_{\alpha+\beta}^{(\alpha)} ) h_2(X_\alpha^{(\alpha)}, X_{\alpha+\beta+\gamma}^{(\alpha)} ) ] \\
		&= \E\Big[ \E[ h_2(X_0,x)]\big|_{x=X_{\alpha+\beta}^{(\alpha)}} h_2(X_\alpha^{(\alpha)}, X_{\alpha+\beta+\gamma}^{(\alpha)} )  \Big] = 0
	\end{align*}
	because $X_0$ is independent of the other three covariables.
	\item [($\beta$)] if $\beta$ is among the largest indices, then
	\begin{align*}
		& \1{\alpha\le\gamma\le\beta} \E[ h_2(X_0,X_{\alpha+\beta} )h_2(X_\alpha, X_{\alpha+\beta+\gamma}^{(\gamma)} ) ]\\ 
		&= \1{\alpha\le\gamma\le\beta} \E\Big[ h_2(X_0,X_{\alpha+\beta} ) \E[h_2(x, X_{\alpha+\beta+\gamma}^{(\gamma)} )]\big|_{x=X_{\alpha}} \Big] = 0
	\end{align*}
	because $X_{\alpha+\beta+\gamma}^{(\gamma)}$ is independent of the other three covariables. Similarly
	\begin{align*}
		& \1{\gamma\le\alpha\le\beta} \E[ h_2(X_0,X_{\alpha+\beta}^{(\alpha)} )h_2(X_\alpha^{(\alpha)}, X_{\alpha+\beta+\gamma}^{(\alpha)} ) ]\\
		&= \1{\gamma\le\alpha\le\beta} \E\Big[\E[h_2(X_0,x)]\big|_{x=X_{\alpha+\beta}^{(\alpha)}} 
		h_2(X_\alpha^{(\alpha)}, X_{\alpha+\beta+\gamma}^{(\alpha)} )  \Big]=0
	\end{align*}
	because in this case $X_0$ is independent.
	\item [($\gamma$)] if $\gamma$ is among the largest indices, then
	\begin{align*} 
		 & \E[ h_2(X_0,X_{\alpha+\beta} )h_2(X_\alpha, X_{\alpha+\beta+\gamma}^{(\gamma)} ) ]\\ 
		 &= \E\Big[ h_2(X_0,X_{\alpha+\beta} ) \E[h_2(x, X_{\alpha+\beta+\gamma}^{(\gamma)} )]\big|_{x=X_{\alpha}} \Big] = 0
	\end{align*}
	because $X_{\alpha+\beta+\gamma}^{(\gamma)}$ is independent of the other covariables.
\end{itemize}
Next, we can return to \eqref{E:MomentCondition2}: In each subcase, we can subtract the appropriate expectation from the expectation given in the last factor in \eqref{E:MomentCondition2} and then use the Lipschitz-property of the coupling as detailed in \eqref{E:MomentCondition8}. Going through the single subcases, we 
immediately see
\begin{align}
	\eqref{E:MomentCondition2} &\le \sum_{i = \lf ns \rf +1}^{\lf nu \rf} \sum_{\alpha=\lf ns\rf +1-i}^{\lf nu \rf-i} \sum_{\beta=\lf nt\rf +1-i-\alpha}^{\lf nv \rf -i-\alpha} 
	\sum_{\gamma=\lf nt\rf+1-i-\alpha-\beta}^{\lf nv \rf-i-\alpha-\beta} \1{ \alpha\ge 0, \beta> 0, \gamma\ge 0} \nonumber \\
	&\quad \cdot \Big\{ \big|\E[h_2(X_0,X_{\alpha+\beta})h_2(X_{\alpha}X_{\alpha+\beta+\gamma})] 
		- \E[h_2(X_0,X_{\alpha+\beta}^{(\alpha)})h_2(X_{\alpha}^{(\alpha)},X_{\alpha+\beta+\gamma}^{(\alpha)})]\big| \nonumber\\
		&\quad\qquad\times \1{ \beta \vee \gamma \leq \alpha} \nonumber\\
	&\quad + \big|\E[h_2(X_0,X_{\alpha+\beta})h_2(X_{\alpha}X_{\alpha+\beta+\gamma})] 
		- \E[h_2(X_0,X_{\alpha+\beta})h_2(X_{\alpha},X_{\alpha+\beta+\gamma}^{(\gamma)})]\big| \nonumber\\
		&\quad\qquad\times \1{ \alpha \leq \gamma \leq \beta} \nonumber\\
	&\quad + \big|\E[h_2(X_0,X_{\alpha+\beta})h_2(X_{\alpha}X_{\alpha+\beta+\gamma})] 
		- \E[h_2(X_0,X_{\alpha+\beta}^{(\alpha)})h_2(X_{\alpha}^{(\alpha)},X_{\alpha+\beta+\gamma}^{(\alpha)})]\big| \nonumber\\
		&\qquad\times \1{ \gamma \leq \alpha \leq \beta} \nonumber\\
	&\quad + \big|\E[h_2(X_0,X_{\alpha+\beta})h_2(X_{\alpha}X_{\alpha+\beta+\gamma})] 
		- \E[h_2(X_0,X_{\alpha+\beta})h_2(X_{\alpha},X_{\alpha+\beta+\gamma}^{(\gamma)})]\big| \nonumber\\
		&\qquad\times \1{ \alpha \vee \beta \leq \gamma} 
	\Big\}\nonumber\\
	&\lesssim \sum_{i = \lf ns \rf +1}^{\lf nu \rf} \sum_{\alpha=\lf ns\rf +1-i}^{\lf nu \rf-i} \sum_{\beta=\lf nt\rf +1-i-\alpha}^{\lf nv \rf -i-\alpha} 
	\sum_{\gamma=\lf nt\rf+1-i-\alpha-\beta}^{\lf nv \rf-i-\alpha-\beta} \1{ \alpha\ge 0, \beta> 0, \gamma\ge 0} \nonumber \\
	&\quad \cdot 9L( K(q) + \theta) \ \Big\{ 
		\E[d(X_0,X_0^{(\alpha)})^{\rho p}]^{1/p} \1{ \beta \vee \gamma \leq \alpha} \label{E:MomentCondition4}\\
	\begin{split}\label{E:MomentCondition5} 
	&\quad + \E[d(X_0,X_0^{(\gamma)})^{\rho p} ]^{1/p} \1{ \alpha \leq \gamma \leq \beta} \\
	&\quad + \E[d(X_0,X_0^{(\alpha)})^{\rho p}]^{1/p} \1{ \gamma \leq \alpha \leq \beta}
	\end{split} \\ 
	&\quad + \E[d(X_0,X_0^{(\gamma)})^{\rho p}]^{1/p} \1{ \alpha \vee \beta \leq \gamma}\Big\}. \label{E:MomentCondition6}
\end{align}
Here the upper bounds in \eqref{E:MomentCondition4} (resp.,  \eqref{E:MomentCondition5} and \eqref{E:MomentCondition6}) correspond to the subcase $(\alpha)$ 
(resp., $(\beta)$ and $(\gamma)$).
Next, summing up the last four sums, yields the following upper bound (given for clarity with four terms)
\begin{align*}
	\eqref{E:MomentCondition2}	&\lesssim \sum_{i = \lf ns \rf +1}^{\lf nu \rf } \sum_{\beta=\lf nt \rf+1}^{\lf nv \rf} \sum_{\alpha\ge 0} \alpha ~ \E[d(X_0,X_0^{(\alpha)})^{\rho p}]^{1/p} \\ 
	&\quad+ \sum_{i = \lf ns \rf +1}^{\lf nu \rf} \sum_{\beta=\lf nt \rf +1}^{\lf nv \rf} \sum_{\gamma \geq 0} \gamma ~\E[d(X_0,X_0^{(\gamma)})^{\rho p}]^{1/p} \\
	&\quad+ \sum_{i = \lf ns \rf +1}^{\lf nu\rf} \sum_{\beta=\lf nt \rf +1}^{\lf nv \rf} \sum_{\alpha\ge 0} \alpha ~ \E[d(X_0,X_0^{(\alpha)})^{\rho p}]^{1/p} \\ 
	&\quad+ \sum_{i = \lf ns \rf+1}^{\lf nu\rf} \sum_{\beta=\lf nt \rf+1}^{\lf nv \rf} \sum_{\gamma\ge 0} \gamma ~ \E[d(X_0,X_0^{(\gamma)})^{\rho p}]^{1/p} \\
	&\lesssim n^2 (u-s) (v-t)
\end{align*}
by the approximating property \eqref{Cond:Approximation} in Assumption~\ref{AssumptionFCLT}.

{\it Subcase (ii)}: It is straightforward to see that a similar calculation is valid if $i\le j < \ell \le k$.

Hence, we obtain the following upper bound for \eqref{E:MomentCondition1}
\begin{align*}
	n^2 (n-1)^2 \E[ |U_n(h_2)(A) | ^2 ] \lesssim n^2 (u-s)(v-t) = n^2 |A|.
\end{align*}
Consequently, there is a constant $C\in\R_+$ such that for a block $A$ ``above the diagonal'' and with corner points on the grid $\Pi_n$ we have
\begin{align}\label{Eq:UpperBoundAboveDiagBlock}
	\E[ |\sqrt{n} U_n(h_2)(A) |^2 ] \le C \frac{ |A| }{n} \le C |A|^{3/2},
\end{align}
where the last step follows because $\sqrt{|A|} = \sqrt{ (u-s) (v-t) } \ge \sqrt{n^{-1} n^{-1}} = n^{-1}$.\\[5pt]

{\it (b) A block ``on the diagonal''.} In this case $s=t < u = v$ and therefore (cf. \eqref{E:MomentCondition1}) 
\begin{align}
\begin{split}\label{E:MomentConditionDiagonalBlock}
	&n^2 (n-1)^2 \E[ | U_n(h_2)(A) |^2 ] \\
	=&  \ \sum_{i,k,j,\ell = \lf ns \rf +1}^{ \lf nu\rf} | \E[ h_2(X_i,X_j)h_2(X_k, X_\ell) ]|. 
\end{split}
\end{align}
Once more, taking into account the symmetry of the problem, we only need to consider the cases where $i\le k$. Furthermore, it is sufficient to 
consider only the following three instances
\begin{align*}
	 &(i) \quad i < j \le k < \ell, \qquad (ii) \quad i \le k \le j \le \ell, \qquad (iii) \quad i \le k < \ell \le j.
\end{align*}
For example, the first case also encompasses the cases $j<i\leq k < \ell$ and $i<j\leq \ell < k$ and $j<i\leq \ell <k$ because 
\begin{align*}
	\E[h_2(X_i,X_j)h_2(X_k,X_\ell)] 
	=& \E[h_2(X_j,X_i)h_2(X_k,X_\ell)]\\
	=& \E[h_2(X_i,X_j)h_2(X_\ell,X_k)]	= \E[h_2(X_j,X_i)h_2(X_\ell,X_k)].
\end{align*}
Now the cases (i), (ii) and (iii) essentially work in the same spirit and again rely heavily on the property of $h_2$ being degenerate as well as on the knowledge 
of the relative positions of the indices $i,j,k,\ell$.

{\it Case (i)}: Using the stationarity of the process, we obtain
\begin{align}
	&\sum_{\lf ns \rf +1\le i < j \le k < \ell\le \lf nu\rf} | \E[ h_2(X_i,X_j) h_2(X_k,X_\ell) ]  | \nonumber \\
	&= \sum_{\lf ns \rf +1\le i < j \le k < \ell\le \lf nu\rf} | \E[ h_2(X_0,X_{j-i}) h_2(X_{k-i},X_{\ell-i}) ]  | \nonumber \\
	&\le n(u-s) \sum_{\lf ns \rf +1\le o,r,w \le \lf nu\rf} | \E[ h_2(X_0,X_{o}) h_2(X_{o+r},X_{o+r+w}) ]  |. \label{E:VarDegenerateU1}
\end{align}
We split the sum in \eqref{E:VarDegenerateU1} in three subsums depending on the relative positions of the three indices: $(\alpha)$ $o,r \le w$, $(\beta)$ $r,w \le o$ and $(\gamma)$ $o,w\le r$. 
Depending on the subcase, we proceed similar as in the setting for a block ``above the diagonal'' and use a suitable coupling in combination with
\eqref{E:MomentCondition8} to derive a fast enough decay of the expectation.

{\it Subcase $(\alpha)$}:
\begin{align*}
		&\sum_{\lf ns \rf +1 \le o,r \le w \le \lf nu\rf} | \E[ h_2(X_0,X_{o}) h_2(X_{o+r},X_{o+r+w}) ]  | \\
		&= \sum_{\lf ns \rf +1 \le o,r \le w \le \lf nu\rf} | \E[ h_2(X_0,X_{o}) h_2(X_{o+r},X_{o+r+w})] - \E[h_2(X_0,X_{o}) h_2(X_{o+r},X^{(w)}_{o+r+w}) ]  | \\
		&\lesssim \sum_{\lf ns \rf +1 \le o,r \le w \le \lf nu\rf} \E[ d(X_0,X_0^{(w)})^{\rho p}]^{1/p} \\
		&\lesssim n(u-s) \sum_{w=1}^\infty w~  \E[ d(X_0,X_0^{(w)})^{\rho p}]^{1/p}.
\end{align*}	

{\it Subcase $(\beta)$:}
\begin{align*}
		&\sum_{\lf ns \rf +1\le r,w \le o \le \lf nu\rf} | \E[ h_2(X_0,X_{o}) h_2(X_{o+r},X_{o+r+w})]  | \\
		&\le \sum_{\lf ns \rf +1\le r \le w \le o \le \lf nu\rf} | \E[ h_2(X_0,X_{o}) h_2(X_{o+r},X_{o+r+w})]  | \\
		&\quad \qquad\qquad + \sum_{\lf ns \rf +1\le w \le r \le o \le \lf nu\rf} | \E[ h_2(X_0,X_{o}) h_2(X_{o+r},X_{o+r+w})]  | \\
		&\le \sum_{\lf ns \rf +1\le r \le w \le o \le \lf nu\rf} 
		| \E[ h_2(X_0,X_{o}) h_2(X_{o+r},X_{o+r+w})]\\
		&\quad \qquad\qquad - \E[ h_2(X_0,X_{o}) h_2(X_{o+r},X_{o+r+w}^{(w)})]  | \\
		&\quad \qquad\qquad + \sum_{\lf ns \rf +1\le w \le r \le o \le \lf nu\rf} 
		| \E[ h_2(X_0,X_{o}) h_2(X_{o+r},X_{o+r+w})] \\ 
		&\quad \qquad\qquad - \E[h_2(X_0,X^{(o)}_{o}) h_2(X^{(r)}_{o+r},X^{(r)}_{o+r+w})]  | \\
		&\lesssim n(u-s) \sum_{w=1}^\infty w~ \E[ d(X_0,X_0^{(w)})^{\rho p}]^{1/p}
		+ n(u-s) \sum_{o=1}^\infty o~ \E[ d(X_0,X_0^{(o)})^{\rho p}]^{1/p}\\
		&\quad \qquad\qquad+ n(u-s) \sum_{r=1}^\infty r~ \E[ d(X_0,X_0^{(r)})^{\rho p}]^{1/p}.
\end{align*}	

{\it Subcase $(\gamma)$:}
\begin{align*}
		&\sum_{\lf ns \rf +1\le o,w \le r \le \lf nu\rf} | \E[ h_2(X_0,X_{o}) h_2(X_{o+r},X_{o+r+w})]  | \\
		&\le \sum_{\lf ns \rf +1\le o \le w \le r \lf nu\rf} | \E[ h_2(X_0,X_{o}) h_2(X_{o+r},X_{o+r+w})] \\
		&\quad \qquad\qquad - \E[ h_2(X_0,X_{o}) h_2(X_{o+r},X_{o+r+w}^{(w)})]  | \\
		& + \sum_{1\le w\le v \le u \le n} | \E[ h_2(X_0,X_{o}) h_2(X_{o+r},X_{o+r+w}) ] \\
		&\quad \qquad\qquad- \E[h_2(X_0,X^{(o)}_{o}) h_2(X^{(r)}_{o+r},X^{(r)}_{o+r+w})]  | \\
		&\lesssim n(u-s) \sum_{w=1}^\infty w~ \E[ d(X_0,X_0^{(w)})^{\rho p}]^{1/p}
		+ n(u-s) \sum_{o=1}^\infty o~ \E[ d(X_0,X_0^{(o)})^{\rho p}]^{1/p}\\
		&\quad \qquad\qquad + n(u-s) \sum_{r=1}^\infty r~  \E[ d(X_0,X_0^{(r)})^{\rho p}]^{1/p}.
\end{align*}
Thus, collecting the three subcases and returning to \eqref{E:VarDegenerateU1}, we see once more with the approximating property from Equation \eqref{Cond:Approximation} that
\begin{align*}
	\sum_{\lf ns \rf +1\le i < j \le k < \ell \le \lf nu\rf} | \E[ h_2(X_i,X_j) h_2(X_k,X_{\ell}) ]  | 
	\lesssim n^2(u-s)^2.
\end{align*}
One finds with similar ideas that in \\
{\it Case (ii)}:
\begin{align*}
	&\sum_{1\le i \le k \le j \le \ell\le n} | \E[ h_2(X_i,X_j) h_2(X_k,X_\ell) ]  | 
	\lesssim n^2(u-s)^2;
\end{align*}
{\it Case (iii)}:
\begin{align*}
	&\sum_{1\le i  \le k < \ell \le j \le n} | \E[ h_2(X_i,X_j) h_2(X_k,X_\ell) ]  | 
	\lesssim n^2(u-s)^2.
\end{align*}
Consequently, there is a constant $C\in\R_+$ such that for a block $A$ ``on the diagonal'' and with corner points on the grid $\Gamma_n$  we have
\begin{align}\label{Eq:UpperBoundDiagBlock}
	\E[ |\sqrt{n} U_n(h_2)(A) |^2 ] \le C \frac{ |A| }{n} \le C |A|^{3/2}.
\end{align}
This completes the proof.
\end{proof}

\begin{proof}[Proof of Corollary~\ref{T:VarDegenerateU}]
Observe that the $2$-dimensional increment of $U_n(h_2)$ over $A=(0,1]\times (0,1]$ is simply
\begin{align*}
	U_n(h_2)(A) = U_n(h_2)(1,1) = U_n(h_2) = \frac{2}{n(n-1)} \sum_{1\leq i<j\leq n} h_2(X_i,X_j).
\end{align*}
Using the upper bound \eqref{Eq:UpperBoundDiagBlock} which is derived in the proof Theorem \ref{T:MomentCondition} we conclude 
\begin{align*}
	& \E\left[ \left|\sqrt{n} \ U_n(h_2)(A)  \right|^2 \right]
	=  \ \E\left[ \left|\frac{2}{\sqrt{n}(n-1)} \ \sum_{1\leq i<j\leq n} h_2(X_i,X_j)  \right|^2 \right]\\
	&= \frac{4}{n(n-1)^2} \sum_{1\leq i<j\leq n} \sum_{1\leq k<l\leq n} \E[ h_2(X_i,X_j)h_2(X_k,X_\ell)  ] \le \frac{C}{n}
\end{align*} 
for a certain $C\in\R_+$.
\end{proof}

\begin{cor}[Variance of $U_n(h)$]\label{C:VarUh}
Granted Assumption~\ref{AssumptionFCLT} (p,q,$\rho$) is satisfied. Then there is a universal constant $C'\in\R_+$, such that
$$
		\E\Big[ \Big( \frac{2}{n(n-1)} \sum_{1\le i<j\le n } h(X_i,X_j) - \theta \Big)^2 \Big] \le C' n^{-1}.
$$
\end{cor}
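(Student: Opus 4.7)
The plan is to exploit the Hoeffding decomposition already introduced in Section~\ref{Section_Background}, which gives
\begin{align*}
 U_n(h) - \theta = U_n(h_2) + \frac{2}{n} \sum_{i=1}^n h_1(X_i),
\end{align*}
since $h_1$ is symmetric in its contribution. Applying the elementary bound $(a+b)^2 \le 2a^2 + 2b^2$ reduces the problem to controlling the second moments of the two summands separately.

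For the degenerate part, I would simply invoke Corollary~\ref{T:VarDegenerateU} (equivalently, Theorem~\ref{T:MomentCondition} specialized to $A = (0,1]^2$), which gives $\E[|\sqrt{n}\, U_n(h_2)|^2] \le C$, i.e., $\E[U_n(h_2)^2] \le C/n$. For the linear part, I would argue that $(h_1(X_t))_t$ is a centered, stationary, $L^2$-$m$-approximable real-valued sequence. Centering holds by construction, and the $L^2$-$m$-approximation follows from the $\varrho$-Hölder continuity of $h_1$ (proved in equation~\eqref{Eq:LipContinuityH1}) together with Assumption~\ref{AssumptionFCLT}(A2), exactly as carried out in the opening paragraph of the proof of Theorem~\ref{Thrm:FCLTforUStatistic}.

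Once $(h_1(X_t))_t$ is known to be $L^2$-$m$-approximable, the computation at the beginning of the proof of Corollary~\ref{Cor:SLLNforL^2-m-processes} shows that $\sum_{k\in\Z}|\Cov(h_1(X_0), h_1(X_k))| < \infty$. Consequently,
\begin{align*}
 \E\!\left[\Big(\tfrac{1}{n}\sum_{i=1}^n h_1(X_i)\Big)^2\right]
 = \frac{1}{n^2}\sum_{i,j=1}^n \Cov(h_1(X_i),h_1(X_j))
 \le \frac{1}{n}\sum_{k\in\Z} |\Cov(h_1(X_0), h_1(X_k))|
 \le \frac{C''}{n}.
\end{align*}
Combining the two bounds yields
\begin{align*}
 \E[(U_n(h)-\theta)^2] \le 2\, \E[U_n(h_2)^2] + \frac{8}{n^2}\, \E\Big[\Big(\sum_{i=1}^n h_1(X_i)\Big)^2\Big]
 \le \frac{C'}{n},
\end{align*}
which is the claim.

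There is no real obstacle: the entire statement is essentially a corollary of the two ingredients (the degenerate-part bound and the SLLN-type covariance summability) already established. The only point requiring a moment of care is to recall, rather than re-prove, that the Hölder continuity of $h_1$ transfers the $L^p$-$m$-approximation of $(X_t)_t$ to the $L^2$-$m$-approximation of $(h_1(X_t))_t$; this was done explicitly in the proof of Theorem~\ref{Thrm:FCLTforUStatistic} and can simply be cited.
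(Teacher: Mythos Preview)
Your proposal is correct and follows essentially the same approach as the paper: both use the Hoeffding decomposition, bound the degenerate part via Corollary~\ref{T:VarDegenerateU}, and control the linear part by showing that the covariances of $(h_1(X_t))_t$ are absolutely summable through the $m$-dependent coupling. The only cosmetic difference is that you obtain the linear-part bound by citing the $L^2$-$m$-approximability of $(h_1(X_t))_t$ established in the proof of Theorem~\ref{Thrm:FCLTforUStatistic} together with the covariance computation from Corollary~\ref{Cor:SLLNforL^2-m-processes}, whereas the paper re-derives the covariance bound directly via a H\"older estimate involving $\|h(X_0,X')\|_q$ and the approximation condition~\eqref{Cond:Approximation}.
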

\begin{proof}
\begin{align}
	&\E\Big[ \Big( \frac{2}{n(n-1)} \sum_{1\le i<j\le n } h(X_i,X_j) - \theta \Big)^2 \Big] \nonumber \\
	&\le 3 \ \Big( \frac{2}{n(n-1)} \Big)^2 ~ \Big\{ \E\Big[ \Big( \sum_{1\le i<j\le n } h_2(X_i,X_j) \Big)^2 \Big] 
	+ \frac{(n-1)^2}{2} \E\Big[ \Big( \sum_{1\le i \le n } h_1(X_i) \Big)^2 \Big]  \Big\}. \label{E:VarUh0}
\end{align}
The first term inside the curly parentheses of \eqref{E:VarUh0} is of order $n^2$ by Corollary~\ref{T:VarDegenerateU}.
The second term inside the curly parentheses is of order $n^2\cdot n$. Indeed, let $X'$ and $X''$ be two independent random variables with law $\p_{X_0}$, 
then by Assumption~\ref{AssumptionFCLT} (p,q,$\rho$)
\begin{align*}
	&\sum_{1\le i \le j\le n} \E[ h_1(X_i)h_1(X_j) ] = \sum_{1\le i,j\le n} \E[ (h(X_0,X')-\theta)(h(X_{j-i},X'')-\theta) ] \\
	&= \sum_{1\le i,j\le n} \E[ (h(X_0,X')-\theta)(h(X_{j-i},X'')- h(X_{j-i}^{(j-i)}, X'') ) ] \\
	&\le \| h(X_0, X') \|_q ~  \sum_{1\le i,j\le n} L \| d(X_{j-i},X_{j-i}^{(j-i)})^\rho \|_p \lesssim n.
\end{align*}
This completes the proof.
\end{proof}

%%%%%%%%%%%%%
%%%%%%%%%%%%%
\subsection{Auxiliary results}

The bottleneck distance between persistence diagrams $D_1$, $D_2$ is given by
$$
	W_B( D_1, D_2 ) = W_\infty(D_1,D_2) = \inf_{\gamma} \sup_{p\in D_1} \|p - \gamma(p) \|_\infty
$$
where $\gamma$ is the set of bijections between the multi-sets $D_1$ and $D_2$ (a point with multiplicity $m>1$ is considered as $m$ disjoint copies) and $\|p-q\|_\infty=\max\{ |x_p-x_q|, |y_p-y_q| \}$).

\begin{proposition}\label{P:Stability}[Chazal et al.\ (2014) \cite{chazal2014persistence}, Cohen-Steiner et al.\ 2007 \cite{cohen2007stability}]
Let $X,Y$ be two totally bounded metric spaces. Let $\pd(X)$, resp., $\pd(Y)$ be the persistence diagrams obtained from $X$, resp., $Y$ using the \v Cech or Vietoris-Rips filtration. Then
\begin{align*}
	W_B( \pd(X), \pd(Y) ) \le 2 d_{GH}(X,Y)
\end{align*}
where $d_{GH}$ is the Gromov-Hausdorff distance.
\end{proposition}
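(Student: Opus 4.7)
The plan is to deduce this stability bound from the algebraic stability theorem for persistence modules by constructing an explicit interleaving out of a near-optimal correspondence. Fix any $\epsilon>d_{GH}(X,Y)$. By the standard characterization of the Gromov-Hausdorff distance, there exists a correspondence $C\subseteq X\times Y$ (a relation projecting surjectively onto both factors) whose metric distortion satisfies
\begin{align*}
\mathrm{dis}(C) := \sup_{(x,y),(x',y')\in C}|d_X(x,x')-d_Y(y,y')|<2\epsilon.
\end{align*}
Choose set-maps $\phi\colon X\to Y$ and $\psi\colon Y\to X$ whose graphs lie inside $C$.

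Next I would promote $\phi$ and $\psi$ to simplicial maps between the filtrations. For the Vietoris-Rips case the argument is almost immediate: if all pairwise distances in $\{x_0,\ldots,x_k\}\subseteq X$ are at most $a$, then by the distortion bound all pairwise distances in $\{\phi(x_0),\ldots,\phi(x_k)\}\subseteq Y$ are at most $a+2\epsilon$, so $\phi$ induces a simplicial map between the VR complexes at these two scales, and similarly for $\psi$. The compositions $\psi\circ\phi$ and $\phi\circ\psi$ are then contiguous to the canonical inclusions at the shifted scale $a+4\epsilon$. For the \v Cech filtration $\cC(\cdot,a)$ the direct transport of common intersection points of balls through $C$ is more delicate; the cleanest route is to isometrically embed $X$ and $Y$ into a common ambient metric space realising $d_{GH}$ up to arbitrarily small loss and then invoke the standard \v Cech stability with respect to the ambient Hausdorff distance $d_H$. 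The factor of $2$ in the proposition accounts precisely for the passage $d_H\le 2\,d_{GH}$ afforded by such an embedding.

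Passing to singular (or simplicial) homology in the fixed feature dimension turns the above chain of simplicial maps into a $2\epsilon$-interleaving of the persistent homology modules of $X$ and $Y$. I would then invoke the algebraic stability (isometry) theorem for persistence modules of Chazal et al., which states that a $\delta$-interleaving of q-tame persistence modules forces the bottleneck distance of their diagrams to be at most $\delta$. Setting $\delta=2\epsilon$ yields $W_B(\pd(X),\pd(Y))\le 2\epsilon$, and letting $\epsilon\downarrow d_{GH}(X,Y)$ completes the proof.

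The main obstacle is establishing the interleaving cleanly for the \v Cech filtration, since, unlike Vietoris-Rips, the existence of a common intersection point of balls is not literally preserved by a correspondence; the ambient-embedding reduction described above is what I see as the most transparent way around this, and it is also where the factor of $2$ is lost. Once the interleaving is in hand, the algebraic stability step is a black-box application and is not the core difficulty.
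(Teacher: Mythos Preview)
The paper does not actually prove this proposition: it is stated as a citation of known results from Chazal et al.\ \cite{chazal2014persistence} and Cohen-Steiner et al.\ \cite{cohen2007stability}, and is then used as a black box in the proof of Lemma~\ref{L:PDandHD}. Your sketch is a faithful outline of the argument in \cite{chazal2014persistence}: pick a correspondence realising $d_{GH}$ up to $\epsilon$, use its distortion bound to build a $2\epsilon$-interleaving of the Vietoris-Rips (or \v Cech) persistence modules, and conclude via algebraic stability. So there is nothing to compare against in the present paper, and your proposal is correct as a proof of the cited result.
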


\begin{lemma}\label{L:PDandHD}
Let $X,Y$ be two finite subsets in a metric space $(M,d)$. Let $\pd(X)$, resp., $\pd(Y)$ be the persistence diagrams obtained from $X$, resp., $Y$ using the \v Cech or Vietoris-Rips filtration.
Then
\begin{align*}
	W_B( \pd(X), \pd(Y) ) \le 2 d_{H}(X,Y)
\end{align*}
where
$$
	d_H(X,Y) = \max\Big\{ \sup_{x\in X} \inf_{y\in Y} d(x,y), \sup_{y\in Y} \inf_{x\in X}  d(x,y)	\Big\}
$$
is the Hausdorff distance between $X$ and $Y$.
\end{lemma}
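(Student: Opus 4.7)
The plan is to reduce the claim to the stability result of Proposition~\ref{P:Stability} by controlling the Gromov--Hausdorff distance via the ambient Hausdorff distance. Since $X,Y$ are both finite, they are in particular totally bounded metric spaces (with the subspace metric inherited from $(M,d)$), so Proposition~\ref{P:Stability} applies and yields
\begin{equation*}
    W_B\bigl(\pd(X), \pd(Y)\bigr) \le 2\, d_{GH}(X,Y).
\end{equation*}

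The main step is then to observe that for two subsets $X,Y$ of a common metric space $(M,d)$ one has $d_{GH}(X,Y) \le d_H(X,Y)$. Recall that $d_{GH}(X,Y)$ is defined as the infimum of $d_H^Z(\phi(X), \psi(Y))$ taken over all metric spaces $Z$ and all isometric embeddings $\phi\colon X\to Z$, $\psi\colon Y\to Z$. Taking $Z = M$ with $\phi, \psi$ the canonical inclusions is an admissible choice, and with this choice $d_H^Z(\phi(X),\psi(Y))$ is exactly the ambient Hausdorff distance $d_H(X,Y)$ given in the statement. Hence the infimum is at most $d_H(X,Y)$.

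Combining the two inequalities gives
\begin{equation*}
    W_B\bigl(\pd(X), \pd(Y)\bigr) \le 2\, d_{GH}(X,Y) \le 2\, d_H(X,Y),
\end{equation*}
which is the asserted bound. I do not expect a genuine obstacle here: the stability theorem does all the topological work, and the remaining inequality $d_{GH}\le d_H$ is a direct consequence of the definition of the Gromov--Hausdorff distance as an infimum over isometric embeddings into common ambient spaces.
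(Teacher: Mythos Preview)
Your proposal is correct and follows exactly the same approach as the paper: apply Proposition~\ref{P:Stability} to get the bound in terms of $d_{GH}$, then use that the canonical inclusions of $X,Y$ into $M$ are isometric embeddings, so $d_{GH}(X,Y)\le d_H(X,Y)$.
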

\begin{proof}
We apply Proposition~\ref{P:Stability} and use $d_{GH}(X,Y)\le d_H(X,Y)$ in the present case because the identity on $M$ is an isometric embedding.
\end{proof}

\section*{Acknowledgments}
Johannes Krebs and Daniel Rademacher are grateful for the financial support of the German Research Foundation (DFG), Grant Number KR-4977/2-1.

%\bibliographystyle{abbrvnat}
%\bibliography{Bibliography}

\end{document}